\documentclass[12pt]{amsart}       %was 12pt
\usepackage{txfonts}
\usepackage{amssymb}
\usepackage{eucal}
\usepackage{graphicx}
\usepackage{amsmath}
\usepackage{amscd}
\usepackage[all]{xy}           %xypic macro for latex2.09
\usepackage{amsfonts,latexsym}
\usepackage{xspace}
\usepackage{epsfig}
\usepackage{float}
\usepackage{needspace}
\usepackage{color}
\usepackage{fancybox}
\usepackage{multicol}
\usepackage{colordvi}

\usepackage{ifpdf}
\ifpdf
\usepackage[colorlinks,final,backref=page,hyperindex]{hyperref}
\else
\usepackage[colorlinks,final,backref=page,hyperindex,hypertex]{hyperref}
\fi

\usepackage[active]{srcltx} %SRC Specials for DVI Searching
\usepackage{tikz}

\usepackage{mathrsfs}
\newtheorem{theorem}{Theorem}[section]
\newtheorem{prop}[theorem]{Proposition}

\newtheorem{coro}[theorem]{Corollary}
\newtheorem{prop-def}{Proposition-Definition}[section]
\newtheorem{coro-def}{Corollary-Definition}[section]

\theoremstyle{definition}
\newtheorem{defn}[theorem]{Definition}
\newtheorem{remark}[theorem]{Remark}

\newtheorem{exam}[theorem]{Example}

\newcommand{\nc}{\newcommand}
\nc{\tred}[1]{\textcolor{red}{#1}}
\nc{\tblue}[1]{\textcolor{blue}{#1}}
\nc{\tgreen}[1]{\textcolor{green}{#1}}
\nc{\tpurple}[1]{\textcolor{purple}{#1}}
\nc{\btred}[1]{\textcolor{red}{\bf #1}}
\nc{\btblue}[1]{\textcolor{blue}{\bf #1}}
\nc{\btgreen}[1]{\textcolor{green}{\bf #1}}
\nc{\btpurple}[1]{\textcolor{purple}{\bf #1}}
\nc{\NN}{{\mathbb N}}
\nc{\ncsha}{{\mbox{\cyr X}^{\mathrm NC}}} \nc{\ncshao}{{\mbox{\cyr
X}^{\mathrm NC}_0}}

\newcommand{\efootnote}[1]{}
\newcommand{\delete}[1]{}

\delete{
\nc{\mlabel}[1]{\label{#1}}  % Use this to suppress names
\nc{\mcite}[1]{\cite{#1}}  % Use this to suppress names
\nc{\mref}[1]{\ref{#1}}  % Use this to suppress names
\nc{\mbibitem}[1]{\bibitem{#1}} % Use this to show number
}

\nc{\mlabel}[1]{\label{#1}{\hfill \hspace{1cm}{\bf{{\ }\hfill(#1)}}}}
\nc{\mcite}[1]{\cite{#1}{{\bf{{\ }(#1)}}}}  % Use this lines to show names
\nc{\mref}[1]{\ref{#1}{{\bf{{\ }(#1)}}}}  % Use this lines to show names
\nc{\mbibitem}[1]{\bibitem[\bf #1]{#1}} % Use this to show name
\nc{\opa}{\ast} \nc{\opb}{\odot} \nc{\op}{\bullet} \nc{\pa}{\frakL}
\nc{\arr}{\rightarrow} \nc{\lu}[1]{(#1)} \nc{\mult}{\mrm{mult}}
\nc{\diff}{\mathfrak{Diff}}
\nc{\opc}{\sharp}\nc{\opd}{\natural}
\nc{\ope}{\circ}
\nc{\dpt}{\mathrm{d}}
\nc{\hck}{H_{RT}}
\nc{\vdf}{\calf}
\nc{\ldf}{\calf_\ell}
\nc{\hlf}{H_\ell}
\nc{\onek}{\mathbf{1}_\bfk}
\nc{\tforall}{\quad \text{ for all }}

\nc{\AW}{\mathcal{A}}

\nc{\rba}{Rota-Baxter algebra\xspace}
\nc{\rbas}{Rota-Baxter algebras\xspace}

\nc{\ari}{\mathrm{ar}}

\nc{\lef}{\mathrm{lef}}

\nc{\Sh}{\mathrm{ST}}

\nc{\Cr}{\mathrm{Cr}}

\nc{\st}{{Schr\"oder tree}\xspace}
\nc{\sts}{{Schr\"oder trees}\xspace}

\nc{\vertset}{\Omega} % set of vertex decorations

\nc{\assop}{\quad \begin{picture}(5,5)(0,0)
\line(-1,1){10}
\put(-2.2,-2.2){$\bullet$}
\line(0,-1){10}\line(1,1){10}
\end{picture} \quad \smallskip}

\nc{\operator}{\begin{picture}(5,5)(0,0)
\line(0,-1){6}
\put(-2.6,-1.8){$\bullet$}
\line(0,1){9}
\end{picture}}

\nc{\idx}{\begin{picture}(6,6)(-3,-3)
\put(0,0){\line(0,1){6}}
\put(0,0){\line(0,-1){6}}
\end{picture}}

\nc{\pb}{{\mathrm{pb}}}
\nc{\Lf}{{\mathrm{Lf}}}

\nc{\lft}{{left tree}\xspace}
\nc{\lfts}{{left trees}\xspace}

\nc{\fat}{{fundamental averaging tree}\xspace}

\nc{\fats}{{fundamental averaging trees}\xspace}
\nc{\avt}{\mathrm{Avt}}

\nc{\rass}{{\mathit{RAss}}}

\nc{\aass}{{\mathit{AAss}}}

\nc{\vin}{{\mathrm Vin}}    %decoration set of indices
\nc{\lin}{{\mathrm Lin}}    %decoration set of leaves
\nc{\inv}{\mathrm{I}n}
\nc{\gensp}{V} % space of generators
\nc{\genbas}{\mathcal{V}} % basis of the space of generators
\nc{\bvp}{V_P}     % Rota-Baxter generating space
\nc{\gop}{{\,\omega\,}}     % generic binary operation

\nc{\bin}[2]{ (_{\stackrel{\scs{#1}}{\scs{#2}}})}  %binomial coeff
\nc{\binc}[2]{ \left (\!\! \begin{array}{c} \scs{#1}\\
    \scs{#2} \end{array}\!\! \right )}  %binomial coeff
\nc{\bincc}[2]{  \left ( {\scs{#1} \atop
    \vspace{-1cm}\scs{#2}} \right )}  %binomial coeff
\nc{\bs}{\bar{S}} \nc{\cosum}{\sqsubset} \nc{\la}{\longrightarrow}
\nc{\rar}{\rightarrow} \nc{\dar}{\downarrow} \nc{\dprod}{**}
\nc{\dap}[1]{\downarrow \rlap{$\scriptstyle{#1}$}}
\nc{\md}{\mathrm{dth}} \nc{\uap}[1]{\uparrow
\rlap{$\scriptstyle{#1}$}} \nc{\defeq}{\stackrel{\rm def}{=}}
\nc{\disp}[1]{\displaystyle{#1}} \nc{\dotcup}{\
\displaystyle{\bigcup^\bullet}\ } \nc{\gzeta}{\bar{\zeta}}
\nc{\hcm}{\ \hat{,}\ } \nc{\hts}{\hat{\otimes}}
\nc{\barot}{{\otimes}} \nc{\free}[1]{\bar{#1}}
\nc{\uni}[1]{\tilde{#1}} \nc{\hcirc}{\hat{\circ}} \nc{\lleft}{[}
\nc{\lright}{]} \nc{\lc}{\lfloor} \nc{\rc}{\rfloor}
\nc{\curlyl}{\left \{ \begin{array}{c} {} \\ {} \end{array}
    \right .  \!\!\!\!\!\!\!}
\nc{\curlyr}{ \!\!\!\!\!\!\!
    \left . \begin{array}{c} {} \\ {} \end{array}
    \right \} }
\nc{\longmid}{\left | \begin{array}{c} {} \\ {} \end{array}
    \right . \!\!\!\!\!\!\!}
\nc{\onetree}{\bullet} \nc{\ora}[1]{\stackrel{#1}{\rar}}
\nc{\ola}[1]{\stackrel{#1}{\la}}%${\Bbb Z}$
\nc{\ot}{\otimes} \nc{\mot}{{{\boxtimes\,}}}
\nc{\otm}{\overline{\boxtimes}} \nc{\sprod}{\bullet}
\nc{\scs}[1]{\scriptstyle{#1}} \nc{\mrm}[1]{{\rm #1}}
\nc{\margin}[1]{\marginpar{\rm #1}}   %{\rm #1}}
\nc{\dirlim}{\displaystyle{\lim_{\longrightarrow}}\,}
\nc{\invlim}{\displaystyle{\lim_{\longleftarrow}}\,}
\nc{\mvp}{\vspace{0.3cm}} \nc{\tk}{^{(k)}} \nc{\tp}{^\prime}
\nc{\ttp}{^{\prime\prime}} \nc{\svp}{\vspace{2cm}}
\nc{\vp}{\vspace{8cm}} \nc{\proofbegin}{\noindent{\bf Proof: }}
\nc{\proofend}{$\blacksquare$ \vspace{0.3cm}}
\nc{\modg}[1]{\!<\!\!{#1}\!\!>}
\nc{\intg}[1]{F_C(#1)} \nc{\lmodg}{\!
<\!\!} \nc{\rmodg}{\!\!>\!}
\nc{\cpi}{\widehat{\Pi}}
\nc{\sha}{{\mbox{\cyr X}}}  %used to be \cyr

\newfont{\scyr}{wncyr10 scaled 550}
\nc{\ssha}{\mbox{\bf \scyr X}}
\nc{\shap}{{\mbox{\cyrs X}}} %sha as product
\nc{\shpr}{\diamond}    %Shuffle product
\nc{\shp}{\ast} \nc{\shplus}{\shpr^+}
\nc{\shprc}{\shpr_c}    %Cartier's product
\nc{\msh}{\ast} \nc{\zprod}{m_0} \nc{\oprod}{m_1}
\nc{\vep}{\epsilon} \nc{\labs}{\mid\!} \nc{\rabs}{\!\mid}
\nc{\sqmon}[1]{\langle #1\rangle}
\nc{\mmbox}[1]{\mbox{\ #1\ }} \nc{\dep}{\mrm{dep}} \nc{\fp}{\mrm{FP}}
\nc{\rchar}{\mrm{char}} \nc{\End}{\mrm{End}} \nc{\Fil}{\mrm{Fil}}
\nc{\Mor}{Mor\xspace} \nc{\gmzvs}{gMZV\xspace}
\nc{\gmzv}{gMZV\xspace} \nc{\mzv}{MZV\xspace}
\nc{\mzvs}{MZVs\xspace} \nc{\Hom}{\mrm{Hom}} \nc{\id}{\mrm{id}}
\nc{\im}{\mrm{im}} \nc{\incl}{\mrm{incl}} \nc{\map}{\mrm{Map}}
\nc{\mchar}{\rm char} \nc{\nz}{\rm NZ} \nc{\supp}{\mathrm Supp}

\nc{\Alg}{\mathbf{Alg}} \nc{\Bax}{\mathbf{Bax}} \nc{\bff}{\mathbf f}
\nc{\bfk}{{\bf k}} \nc{\bfone}{{\bf 1}} \nc{\bfx}{\mathbf x}
\nc{\bfy}{\mathbf y}
\nc{\base}[1]{\bfone^{\otimes ({#1}+1)}} %{{a_{#1}}}
\nc{\Cat}{\mathbf{Cat}}

\nc{\detail}{\marginpar{\bf More detail}
    \noindent{\bf Need more detail!}
    \svp}
\nc{\Int}{\mathbf{Int}} \nc{\Mon}{\mathbf{Mon}}
\nc{\rbtm}{{shuffle }} \nc{\rbto}{{Rota-Baxter }}
\nc{\remarks}{\noindent{\bf Remarks: }} \nc{\Rings}{\mathbf{Rings}}
\nc{\Sets}{\mathbf{Sets}} \nc{\wtot}{\widetilde{\odot}}
\nc{\wast}{\widetilde{\ast}} \nc{\bodot}{\bar{\odot}}
\nc{\bast}{\bar{\ast}} \nc{\hodot}[1]{\odot^{#1}}
\nc{\hast}[1]{\ast^{#1}} \nc{\mal}{\mathcal{O}}
\nc{\tet}{\tilde{\ast}} \nc{\teot}{\tilde{\odot}}
\nc{\oex}{\overline{x}} \nc{\oey}{\overline{y}}
\nc{\oez}{\overline{z}} \nc{\oef}{\overline{f}}
\nc{\oea}{\overline{a}} \nc{\oeb}{\overline{b}}
\nc{\weast}[1]{\widetilde{\ast}^{#1}}
\nc{\weodot}[1]{\widetilde{\odot}^{#1}} \nc{\hstar}[1]{\star^{#1}}
\nc{\lae}{\langle} \nc{\rae}{\rangle}
\nc{\lf}{\lfloor}
\nc{\rf}{\rfloor}
\nc{\QQ}{{\mathbb Q}}
\nc{\RR}{{\mathbb R}} \nc{\ZZ}{{\mathbb Z}}

\nc{\cala}{{\mathcal A}} \nc{\calb}{{\mathcal B}}
\nc{\calc}{{\mathcal C}}
\nc{\cald}{{\mathcal D}} \nc{\cale}{{\mathcal E}}
\nc{\calf}{{\mathcal F}} \nc{\calg}{{\mathcal G}}
\nc{\calh}{{\mathcal H}} \nc{\cali}{{\mathcal I}}
\nc{\call}{{\mathcal L}} \nc{\calm}{{\mathcal M}}
\nc{\caln}{{\mathcal N}} \nc{\calo}{{\mathcal O}}
\nc{\calp}{{\mathcal P}} \nc{\calr}{{\mathcal R}}
\nc{\cals}{{\mathcal S}} \nc{\calt}{{\mathcal T}}
\nc{\calu}{{\mathcal U}} \nc{\calw}{{\mathcal W}} \nc{\calk}{{\mathcal K}}
\nc{\calx}{{\mathcal X}} \nc{\CA}{\mathcal{A}}

\nc{\fraka}{{\mathfrak a}} \nc{\frakA}{{\mathfrak A}}
\nc{\frakb}{{\mathfrak b}} \nc{\frakB}{{\mathfrak B}}
\nc{\frakc}{{\mathfrak c}}
\nc{\frakD}{{\mathfrak D}} \nc{\frakF}{\mathfrak{F}}
\nc{\frakf}{{\mathfrak f}} \nc{\frakg}{{\mathfrak g}}
\nc{\frakH}{{\mathfrak H}} \nc{\frakL}{{\mathfrak L}}
\nc{\frakM}{{\mathfrak M}} \nc{\bfrakM}{\overline{\frakM}}
\nc{\frakm}{{\mathfrak m}} \nc{\frakP}{{\mathfrak P}}
\nc{\frakN}{{\mathfrak N}} \nc{\frakp}{{\mathfrak p}}
\nc{\frakS}{{\mathfrak S}} \nc{\frakT}{\mathfrak{T}}
\nc{\frakX}{{\mathfrak X}}
\nc{\BS}{\mathbb{S
}}

\font\cyr=wncyr10 \font\cyrs=wncyr7
\nc{\ID}{{\rm I}}\nc{\lbar}[1]{\overline{#1}}\nc{\bre}{{\rm bre}}
\nc{\sd}{\cals}\nc{\rb}{\rm RB}\nc{\A}{\rm A}\nc{\LL}{\rm L}\nc{\tx}{\tilde{X}}
\nc{\col}{\Delta_{RT}}\nc{\mul}{m_{RT}}\nc{\ul}{u_{RT}}\nc{\epl}{\varepsilon_{RT}}
\nc{\hl}{H_{RT}}\nc{\arro}[1]{#1}\nc{\px}{P_{\tx}}\nc{\pw}{P_{\mathfrak{w}}}\nc{\pl}{B_\omega^+}
\nc{\pp}{\pl}\nc{\ppp}[1]{B^+(#1)}\nc{\dw}{\diamond_{\mathfrak{w}}}\nc{\dl}{\diamond_{\rm \ell}}
\nc{\ncshaw}{\sha^{{\rm NC}}_{\Omega}}\nc{\ncshal}{\sha^{{\rm NC}}_{{\rm RT}}}
\nc{\ver}{\rm V}\nc{\ld}{l}\nc{\del}{\Delta_{{\rm \ell}}}\nc{\epsl}{\epsilon_{{\rm \ell}}}
\nc{\uul}{u_{{\rm \ell}}}\nc{\oneh}{\mathbf{1}}\nc{\onew}{\mathbf{1}}
\nc{\etree}{1} \nc{\conc}{m_{RT}}
\nc{\hrtb}{H_{RT}(X\sqcup\Omega)} \nc{\hrts}{H_{RT}(X, \Omega)}\nc{\rts}{\mathcal{T}(X, \Omega)}\nc{\rfs}{\mathcal{F}(X, \Omega)} \nc{\ncshall}{\sha^{{\rm NC}}_{{\rm RT}}} \nc{\ldl}{\leq_{\mathrm{dl}}} \nc{\pla}{B_{\alpha}^{+}} \nc{\plb}{B_{\beta}^{+}}

\nc{\bim}[1]{#1}  \nc{\shaop}{\sha_{\Omega}^{+}}  \nc{\shao}{\sha_{\Omega}}
\nc{\bbim}[2]{#1 #2} \nc{\bbbim}[2]{#1,\, #2} \nc{\RBF}{{\rm RBF}}
\nc{\frbf}{F_{\RBF}} \nc{\shaf}{\ssha_{\tiny{\Omega}}} \nc{\sham}{\diamond_{\Omega}}
\nc{\dnx}{\Delta_n A} \nc{\dx}{\Delta A} \nc{\dgp}{{\rm deg_{P}}}
\nc{\dgt}{{\rm deg_{T}}} \nc{\dg}{{\rm deg}} \nc{\ida}{ID($A$)} \nc{\tu}{\tilde{u}} \nc{\tv}{\tilde{v}}
 \nc{\fbase}{\calb} \nc{\LF}{\mathrm{RF}} \nc{\FFA}{\mathrm{LF}} \nc{\irr}{\mathrm{Irr}}
 \nc{\result}{\bfk\mathrm{Irr}(S_n)}  \nc{\I}{I_{\mathrm{ID},n}^0}
 \nc{\nrs}{\calr_n^\star} \nc{\ii}{\mathrm{I}} \nc{\iii}{\mathrm{II}}
\nc{\intl}{{\rm int}}\nc{\ws}[1]{{#1}}\nc{\deleted}[1]{\delete{#1}}\nc{\plas}{placements\xspace}

\nc{\Id}{\mathrm{Id}} \nc{\Irr}{\mathrm{Irr}}

\nc{\tos}{totally ordered set } \nc{\nes}{nonempty set}
\nc{\baa}{\succ_\alpha}
\nc{\laa}{\prec_\alpha}
\nc{\bba}{\succ_\beta}
\nc{\lba}{\prec_\beta}
\nc{\bdt} {\succ_\delta}
\nc{\lia} {\prec_\iota}
\nc{\bka}{\succ_\kappa}
\nc{\lmu}{\prec_\mu}
\nc{\Po}{(P_\omega)_{\omega\in \Omega}}

\nc{\apr}{\cdot} \nc{\spr}{\cdot}

\usetikzlibrary{calc}
\newlength\xch\newlength\dbj
\newif\ifqdd
\begin{document}

\title[Monoidal structure on Rota-Baxter algebra]{Monoidal structure on Rota-Baxter algebras via augmentation, and free cocycle bialgebras}
%{A monoidal categorical approach to augmented Rota-Baxter algebras and free cocycle bialgebras}
\author{Li Guo}
\address{Department of Mathematics and Computer Science, Rutgers University, Newark, NJ 07102, US}
\email{liguo@rutgers.edu}

\author{Yunzhou Xie}\address{Department of Mathematics, Imperial College London, SW7 2AZ, London, UK}
\email{yx3021@ic.ac.uk}

\author{Yi Zhang}
\address{School of Mathematics and Statistics, Center for Applied Mathematics of Jiangsu Province,
Nanjing University of Information Science and Technology, Nanjing, Jiangsu, 210044, P.\,R. China}
\email{zhangy2016@nuist.edu.cn}

\begin{abstract}
A tensor product is known for commutative Rota--Baxter algebras through a universal construction, but its underlying algebra need not be the ordinary tensor product $A\otimes B$ and can be substantially larger.  We show that, after passing to augmented Rota--Baxter algebras, the ordinary tensor product itself carries a natural Rota--Baxter operator
which makes the category monoidal with the usual associator and unit constraints.  The resulting structure is intrinsically nonsymmetric: over a field of characteristic zero, this operator and its flip-conjugated counterpart are the only possibilities among linear combinations of five natural terms, while over any field the full category admits no braiding.  Its comonoid objects are precisely Rota--Baxter cocycle bialgebras.  Moreover, the free Rota--Baxter adjunction yields a bimonad and free cocycle bialgebras on ordinary bialgebras, with compatible Hopf structures for connected filtered inputs, in both the associative and commutative settings.
\end{abstract}

\subjclass[2020]{18M05, 16T05, 16T10, 17B38, 16W99}

\keywords{augmented Rota-Baxter algebra, monoidal category, tensor product, bimonad, cocycle bialgebra, Hopf algebra}

\maketitle

\setcounter{tocdepth}{2}

\vspace{-1cm}

\tableofcontents

\vspace{-1cm}

\section{Introduction}

The purpose of this paper is to construct and study a monoidal structure on augmented Rota--Baxter algebras whose underlying algebraic product is the ordinary tensor product. The basic problem is that the ordinary tensor product of two Rota--Baxter operators does not, in general, define a Rota--Baxter operator on the tensor product algebra. We show that augmentations supply a natural replacement and lead to a monoidal category whose structure is intrinsically oriented: within a natural five-term class there are only two flip-conjugate possibilities, and the full category admits no braiding. This asymmetry is precisely what turns the Hochschild $1$-cocycle identity into the condition that a coproduct be a morphism in the monoidal category. Consequently, Rota--Baxter cocycle bialgebras become its comonoid objects, and the free Rota--Baxter adjunction produces a bimonad and free cocycle bialgebras from ordinary bialgebras.

\subsection{Background and motivation}

A Rota-Baxter algebra of weight $\lambda$~\cite{B60,R69} is an associative
algebra $A$ equipped with a linear operator $P:A\to A$ satisfying
$$
P(x)P(y)=P\bigl(P(x)y+xP(y)+\lambda xy\bigr),
\qquad x,y\in A.
$$
For $\lambda=0$, this identity abstracts integration by parts.
For nonzero weights, it includes algebraic forms of summation
by parts and appears in combinatorics, operads, Hopf algebras, quantum field theory and integrable systems. 
\cite{Ag01,AGKO03,Bai,BBGN,BG,BD,CK00,G12,GK00a,LS,Ro2,STS,WZ,YGT}.

There have been numerous categorical studies of Rota-Baxter algebras in several directions, including Rota-Baxter categories~\cite{CD09}, Rota-Baxter algebra objects in braided or symmetric monoidal categories~\cite{Ag20,GL21,Ji13,LG22}, and categorical and multicategorical formulations of the Rota-Baxter identity~\cite{S16}, as well as approaches via monads and distributive laws, twisting constructions, and species~\cite{FGPXZ,PVO17,ZGK16}. 

In the present paper, we are interested specifically in equipping the category of Rota-Baxter algebras itself with a monoidal structure.
For commutative Rota-Baxter algebras, the Rota-Baxter tensor product constructed in~\cite{CG14} provides such a categorical structure through its universal property. Its underlying algebra, however, need not be the ordinary tensor product $A\otimes B$ and can be substantially larger. Thus a natural question arises:
\begin{quote}
\itshape
Is it possible to equip a category of Rota-Baxter algebras with a monoidal structure while retaining the ordinary tensor product as the underlying algebra?
\end{quote}
This requires a Rota-Baxter operator on the ordinary tensor product $A\otimes B$ that is functorial in the two factors and compatible with the usual associator and unit constraints. We address this question in the present paper.

\subsection{The augmented structure and the tensor-product problem}

Let $(A,P_A)$ and $(B,P_B)$ be Rota-Baxter algebras of the same weight. The operator $P_A\otimes P_B$ generally fails to satisfy the Rota-Baxter identity on $A\otimes B$. We resolve this difficulty by passing to augmented Rota-Baxter algebras. 

An augmented Rota-Baxter algebra is a triple
$(A,P_A,\varepsilon_A)$, where $(A,P_A)$ is a unital
Rota-Baxter algebra and $\varepsilon_A:A\to\mathbf{k}$
is a unital algebra morphism satisfying $\varepsilon_AP_A=0$.
The augmentation gives a direct sum of $\mathbf{k}$-modules
$A=\mathbf{k}1_A\oplus I_A$, where $I_A=\ker\varepsilon_A$.
Since $P_A(A)\subseteq I_A$, the restriction $Q_A=P_A|_{I_A}$
and the element $p_A=P_A(1_A)$ determine $P_A$.
Adjoining a unit to a nonunital Rota-Baxter algebra is more
delicate than adjoining a unit to an associative algebra
\cite{G12}.
In the augmented setting, the Rota-Baxter identity gives
explicit necessary and sufficient conditions on $(I_A,Q_A,p_A)$.

For an augmented algebra, let
$u_A:\mathbf{k}\to A$ be its unit map and put
$e_A=u_A\varepsilon_A$.
For two augmented Rota-Baxter algebras of weight $\lambda$,
define
$$
P_{A\boxtimes B}=P_A\otimes e_B+\mathrm{id}_A\otimes P_B.
$$
The multiplicativity of $e_B$, its scalar image and
$e_BP_B=0$ allow the mixed terms in the Rota-Baxter identity
to be combined.
The ordinary associator and the unit object
$(\mathbf{k},0,\mathrm{id}_{\mathbf{k}})$ then give a monoidal
category.
The formula distinguishes the two factors: the term involving
$P_A$ projects the second factor onto its scalar part, whereas
the term involving $P_B$ leaves the first factor unchanged.

This orientation is an essential structural feature rather than an artifact of the formula. Over a field of characteristic zero, if one searches among linear combinations of the five natural terms
\[
P_A\otimes\id_B,\quad P_A\otimes e_B,\quad \id_A\otimes P_B,\quad e_A\otimes P_B,\quad P_A\otimes P_B,
\]
compatibility with the ordinary associator and unit constraints leaves only the above operator and its flip-conjugated counterpart. Moreover, over any field the resulting full monoidal category admits no braiding. Thus the order dependence cannot be removed by replacing the ordinary flip with another family of commutativity isomorphisms.

The same oriented formula also explains the cocycle condition often appearing in Hopf algebras.
For a bialgebra $H$ with counit $\varepsilon_H$,
the counit identities turn
$(P\otimes e_H)\Delta_H$ into the map
$h\mapsto P(h)\otimes1_H$.
Consequently, requiring $\Delta_H$ to intertwine $P$ with the Rota-Baxter operator $P_{H\boxtimes H}$ gives the Hochschild $1$-cocycle identity.
This relates the construction to cocycle Hopf algebras on rooted trees and free Rota-Baxter algebras
\cite{CK00,ZGG16}.

A direct predecessor in the free commutative setting is the construction for
extended Rota-Baxter algebras in \cite{ZGQ24}.
Setting $\kappa=\mu=0$ in their Eq.~(25) gives the operator
$P\otimes e+\mathrm{id}\otimes P$ on the tensor
square of a free commutative Rota-Baxter algebra.
Their bialgebra and connected filtered Hopf constructions then
give the corresponding free commutative cases considered here.
The present work develops a different level of structure: the operator is defined for arbitrary pairs of augmented Rota--Baxter algebras, including noncommutative ones; its coherence and rigidity are analyzed; the failure of braidings is proved; and the cocycle condition is characterized categorically through comonoid objects. This categorical structure in turn makes the free and forgetful adjunction the mechanism that produces the free cocycle bialgebra constructions.

\subsection{An outline with main results}
First in Section~\ref{sec:structure}, we establish the structural properties of augmented Rota--Baxter algebras needed for the monoidal construction. An augmented Rota--Baxter algebra can be reconstructed from its augmentation ideal, the restricted operator, and the value of the operator at the unit. We give three necessary and sufficient identities for a Rota--Baxter operator on the ideal to extend to the ordinary unitization (Theorem~ \ref{thm:kernel-reconstruction}). When the operator vanishes at the unit, we call the augmented Rota--Baxter algebra normalized; in this case the extension conditions reduce to a quadratic relation on the restricted operator, leading to a concrete description of normalized objects (Theorem~\ref{thm:unit-zero}).

The central construction appears in Section~\ref{sec:tp}. The operator
\[
P_{A\boxtimes B}=P_A\otimes e_B+\id_A\otimes P_B
\]
makes the ordinary tensor product $A\otimes B$ into an augmented Rota--Baxter algebra and is compatible with morphisms, the ordinary associator, and the ordinary unit constraints (Theorems~\ref{thm:tensor-product} and \ref{thm:monoidal}). Thus augmented Rota--Baxter algebras form a monoidal category with the usual tensor product on underlying algebras. The construction is rigid in two complementary senses. Over a field of characteristic zero, it and its flip-conjugated counterpart are the only monoidal products arising from a specified five-term class of natural operator formulas with the ordinary constraints (Theorem~\ref{thm:tensor-uniqueness}). More generally, the asymmetry cannot be removed by choosing a different braiding: over any field the full monoidal category admits no braiding (Theorem~\ref{thm:not-braided}). In this sense the monoidal structure is intrinsically oriented.

Section~\ref{sec:monoidal} shows that this orientation has a direct coalgebraic meaning. A coproduct $\Delta:H\to H\boxtimes H$ is a morphism of augmented Rota--Baxter algebras exactly when it satisfies the Hochschild $1$-cocycle identity. Consequently, the comonoid objects are precisely Rota--Baxter cocycle bialgebras (Theorem~\ref{thm:comonoid}). The forgetful functor to augmented algebras is strict monoidal, so its left adjoint, the free Rota--Baxter functor, is opmonoidal. The induced monad is a bimonad, and its Eilenberg--Moore monoidal product recovers the concrete product $\boxtimes$ (Theorem~\ref{thm:bimonad}). Thus the categorical formulation does not merely repackage the tensor formula: it reconstructs the same monoidal structure from the free Rota--Baxter adjunction and explains why free objects transport ordinary bialgebra structures to cocycle bialgebra structures.

This is applied in Section~\ref{sec:frb}. For a bialgebra $A$ with its prescribed multiplication and coproduct, freeness extends $\Delta_A$ uniquely to a cocycle coproduct on the free Rota--Baxter algebra $F_{\lambda}(A)$. This construction is left adjoint to the forgetful functor from Rota--Baxter cocycle bialgebras to bialgebras (Theorem~\ref{thm:free-lifting}). The commutative analogue is obtained from the free commutative adjunction (Corollary~\ref{cor:free-commutative}). This differs from free constructions generated only by sets \cite{ZGG16,ZXG22}: the input here is already a bialgebra, and its multiplication and coproduct are retained and extended through the monoidal universal property.

For connected filtered inputs, both the associative and commutative free constructions carry compatible connected filtrations and hence admit antipodes (Theorem~\ref{thm:filtered-hopf}). Finally, the iterates $P^n(1)$ connect the classical $\lambda$-divided-power and Stirling-number multiplication identities of \cite{AGKO03,Guo05} with a different coproduct forced by the cocycle identity. This yields explicit coproduct and antipode formulas and illustrates concretely how the new monoidal viewpoint changes the coalgebraic structure while retaining the familiar Rota--Baxter multiplication formulas.

\smallskip

\noindent
\textbf{Notations.}
Throughout, $\mathbf{k}$ is a commutative ring with unit,
$\lambda\in\mathbf{k}$ is fixed, and tensor products are taken
over $\mathbf{k}$.
Algebras are associative and unital unless stated otherwise.
For a unital algebra $A$, write $u_A:\mathbf{k}\to A$ for
its unit map and $1_A=u_A(1)$.
For an augmented algebra $(A,\varepsilon_A)$, put
$I_A=\ker\varepsilon_A$ and $e_A=u_A\varepsilon_A$.
Thus $e_A$ is the algebra projection onto $\mathbf{k}1_A$
along $I_A$.
The extra field, characteristic and invertibility assumptions
are stated where needed.

\section{Augmented Rota-Baxter algebras}\label{sec:structure}
In this section, we introduce augmented Rota-Baxter algebras and record the structural properties needed later. In particular, we describe them in terms of the augmentation ideal, the restricted operator, and the value of the operator at the unit, and then specialize to the normalized case.

\subsection{Definitions and examples}
We begin with basis notions and examples. 

\begin{defn}[\cite{G12}]\label{def:rb}
A \textbf{Rota-Baxter algebra of weight $\lambda\in\mathbf{k}$} is a pair $(A,P)$,
where $A$ is a unital associative $\mathbf{k}$-algebra and $P:A\to A$ is a
$\mathbf{k}$-linear map satisfying
$$
P(x)P(y)=P(P(x)y+xP(y)+\lambda xy),
\qquad x,y\in A.
$$
A morphism of Rota-Baxter algebras is a unital algebra morphism that
commutes with the operators. If the underlying algebras are not required
to have units, we speak of \textbf{nonunital Rota-Baxter algebras} and
do not require their morphisms to preserve units.
\end{defn}

\begin{defn}
An \textbf{augmented Rota-Baxter algebra of weight $\lambda$} is a triple
$(A,P,\varepsilon)$, where $(A,P)$ is a Rota-Baxter algebra of weight
$\lambda$ and $\varepsilon:A\longrightarrow\mathbf{k}$
is a unital algebra morphism such that $\varepsilon P=0$. Equivalently,
$\varepsilon:(A,P)\to(\mathbf{k},0)$ is a morphism of Rota-Baxter algebras.
\end{defn}

An augmentation is a character of the underlying algebra. This is the
notion of character used in combinatorial Hopf algebras \cite{ABS06},
although no coalgebra structure is required in the present definition.
The additional condition $\varepsilon P=0$ is essential for the tensor
product constructed in Section \ref{sec:tp}.

\begin{defn}
Let $(A,P_A,\varepsilon_A)$ and $(B,P_B,\varepsilon_B)$ be augmented
Rota-Baxter algebras of the same weight. A \textbf{morphism}
$f:(A,P_A,\varepsilon_A)\to(B,P_B,\varepsilon_B)$ is a unital algebra
morphism satisfying $fP_A=P_Bf$ and
$\varepsilon_Bf=\varepsilon_A$.
The resulting category is denoted by $\mathbf{AugRBAlg}_\lambda$.
\end{defn}

\begin{exam}\label{ex:basic}

\begin{enumerate}
\item\label{it:ex-zero} Let $A$ be a unital augmented algebra. Then
$(A,0,\varepsilon_A)$ is an augmented Rota-Baxter algebra of every
weight and its operator vanishes at $1_A$.

\item\label{it:ex-can} Let $A$ be a unital augmented algebra. The decomposition
$A=\mathbf{k}1_A\oplus I_A$ is a direct sum of
$\mathbf{k}$-modules, and both summands are closed under
multiplication. The
projection $\mathrm{id}_A-e_A$ onto $I_A$ is a Rota-Baxter
operator of weight $-1$ \cite{G12,GL21}. Hence
$P_A=-\lambda(\mathrm{id}_A-e_A)$ defines an augmented Rota-Baxter
algebra of weight $\lambda$ and satisfies
$P_A(1_A)=0$. This operator is used in the
strong monoidal embedding of Proposition \ref{thm:canonical-embedding} and
in the cocycle construction of Corollary
\ref{coro:canonical-cocycle}.

\item\label{it:ex-dual} Let $A=\mathbf{k}[t]/(t^2)$ and write $\bar t$ for the residue
class of $t$. With augmentation $\varepsilon(a+b\bar t)=a$, the operator
$P(a+b\bar t)=-\lambda b\bar t$ is the operator in the preceding
example, namely Example \ref{ex:basic}(\ref{it:ex-can}).

\item\label{it:ex-int} Assume that $\mathbf{k}$ is a field of characteristic zero. On
$A=\mathbf{k}[x]$, let
$$
P(f)(x)=\int_0^x f(t)\,dt,
\qquad
\varepsilon(f)=f(0).
$$
Then $(A,P,\varepsilon)$ is an augmented Rota-Baxter algebra of weight
zero. Its value at the unit is $P(1_A)=x$, so this operator does not vanish at the unit.

\item\label{it:ex-sum} Let $A$ be the algebra of $\mathbf{k}$-valued functions on
$\mathbb N=\{0,1,2,\ldots\}$ with pointwise multiplication. Define
$$
P(f)(n)=\sum_{i=0}^{n-1}f(i),
\qquad
\varepsilon(f)=f(0).
$$
The sum is zero when $n=0$.
Then $(A,P,\varepsilon)$ is an augmented Rota-Baxter algebra of weight
$1$, and $P(1_A)$ is the function $n\mapsto n$.
\end{enumerate}
\end{exam}

\subsection{The augmentation ideal and the value at the unit}

Let $(A,P_A,\varepsilon_A)$ be an augmented Rota-Baxter algebra. We use the
notation
$$
p_A:=P_A(1_A),
\qquad
Q_A:=P_A|_{I_A}.
$$
The element $p_A$ records the failure of the unit morphism
$u_A:(\mathbf{k},0)\to(A,P_A)$ to be a Rota-Baxter morphism.

\begin{prop}\label{prop:unit-identities}
Let $(A,P_A,\varepsilon_A)$ be an augmented Rota-Baxter algebra of weight
$\lambda$. Then:
\begin{enumerate}
\item\label{it:a}
$$
A=\mathbf{k}1_A\oplus I_A,
\qquad
P_A(A)\subseteq I_A,
\qquad
P_A(I_A)\subseteq I_A.
$$
The restriction $Q_A:I_A\to I_A$ is a Rota-Baxter operator of weight
$\lambda$ on the nonunital algebra $I_A$.

\item\label{it:b} The image $P_A(A)$ is a subalgebra of $I_A$, not necessarily unital.

\item\label{it:c} We have $p_A\in I_A$ and $p_A^2=2P_A(p_A)+\lambda p_A$.
Moreover, for every $a\in A$,
\begin{align*}
P_A^2(a)+\lambda P_A(a)
&=p_AP_A(a)-P_A(p_Aa)\\
&=P_A(a)p_A-P_A(ap_A).
\end{align*}
\end{enumerate}
\end{prop}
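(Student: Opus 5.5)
The three parts follow directly from the definitions, using only that $\varepsilon_A$ is a unital algebra morphism with $\varepsilon_AP_A=0$ and the Rota--Baxter identity specialized at the unit.

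For (\ref{it:a}), every $a\in A$ decomposes as $a=\varepsilon_A(a)1_A+(a-\varepsilon_A(a)1_A)$. The second summand lies in $I_A$ because $\varepsilon_A(1_A)=1$. The intersection $\mathbf{k}1_A\cap I_A$ is zero, since $\varepsilon_A(c1_A)=c$. The condition $\varepsilon_AP_A=0$ gives $P_A(A)\subseteq I_A$ immediately, and in particular $P_A(I_A)\subseteq I_A$. Since $I_A$ is an ideal, it is a nonunital subalgebra. The Rota--Baxter identity for $Q_A$ is then just the identity for $P_A$ restricted to $x,y\in I_A$; every term on the right stays in $I_A$.

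For (\ref{it:b}), the Rota--Baxter identity writes $P_A(x)P_A(y)$ as $P_A(\cdot)$, so $P_A(A)$ is closed under multiplication. It is also a $\mathbf{k}$-submodule and is contained in $I_A$ by (\ref{it:a}). It need not contain $1_A$: indeed $1_A\notin I_A$ whenever $\mathbf{k}\neq 0$.

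For (\ref{it:c}), $p_A\in I_A$ is the case $a=1_A$ of (\ref{it:a}). Setting $x=y=1_A$ in the Rota--Baxter identity gives
\[
p_A^2=P_A(p_A+p_A+\lambda 1_A)=2P_A(p_A)+\lambda p_A .
\]
For the two displayed identities:
\begin{itemize}
\item Put $x=1_A$, $y=a$. This gives $p_AP_A(a)=P_A(p_Aa)+P_A^2(a)+\lambda P_A(a)$, which rearranges to the first line.
\item Put $x=a$, $y=1_A$. This gives $P_A(a)p_A=P_A^2(a)+P_A(ap_A)+\lambda P_A(a)$, which yields the second expression.
\end{itemize}
Both expressions equal $P_A^2(a)+\lambda P_A(a)$, so they agree.

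None of these steps is a real obstacle. The only point needing care is the placement of $x=1_A$ versus $y=1_A$, so that the noncommutative orders $p_Aa$ and $ap_A$ come out correctly.
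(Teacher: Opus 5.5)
Your proposal is correct and follows the paper's proof essentially step for step. It uses the same splitting $a=\varepsilon_A(a)1_A+(a-\varepsilon_A(a)1_A)$ together with $\varepsilon_AP_A=0$ for (a), closure of $P_A(A)$ under the Rota--Baxter identity for (b), and the specializations of the identity at $(1_A,1_A)$, $(1_A,a)$ and $(a,1_A)$ for (c).
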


\begin{proof}
(\ref{it:a}) Since $\varepsilon_A$ is unital, for every $a\in A$ we have $a=\varepsilon_A(a)1_A+ \bigl(a-\varepsilon_A(a)1_A\bigr)$,
and the second summand belongs to $I_A$. If $c1_A\in I_A$, then
$c=\varepsilon_A(c1_A)=0$. Hence $A=\mathbf{k}1_A\oplus I_A$.
The relation $\varepsilon_AP_A=0$ implies
$P_A(A)\subseteq I_A$, and therefore also
$P_A(I_A)\subseteq I_A$. Thus $Q_A=P_A|_{I_A}$ is well-defined.

Let $x,y\in I_A$. Since $I_A$ is a two-sided ideal and
$Q_A(x),Q_A(y)\in I_A$, the element $Q_A(x)y+xQ_A(y)+\lambda xy$
belongs to $I_A$. Restricting the Rota-Baxter identity for $P_A$ to
$x,y\in I_A$ gives
$$
Q_A(x)Q_A(y)
=Q_A\bigl(Q_A(x)y+xQ_A(y)+\lambda xy\bigr).
$$
Hence $Q_A$ is a Rota-Baxter operator of weight $\lambda$ on the
nonunital algebra $I_A$.

(\ref{it:b}) For $a,b\in A$, the Rota-Baxter identity gives
$P_A(a)P_A(b)=P_A\bigl(P_A(a)b+aP_A(b)+\lambda ab\bigr)\in P_A(A)$.
Thus $P_A(A)$ is closed under
multiplication. By Item~(\ref{it:a}), it is contained in $I_A$, so it is a
possibly nonunital subalgebra of $I_A$.

(\ref{it:c}) By Item~(\ref{it:a}),
$p_A=P_A(1_A)$ belongs to $I_A$. Substituting
$x=y=1_A$ in the Rota-Baxter identity gives
$$
p_A^2
=P_A\bigl(p_A+p_A+\lambda1_A\bigr)
=2P_A(p_A)+\lambda p_A.
$$
For $a\in A$, the Rota-Baxter identities at $(1_A,a)$ and
$(a,1_A)$ give, respectively,
\begin{align*}
p_AP_A(a)
&=P_A\bigl(p_Aa+P_A(a)+\lambda a\bigr)
 =P_A(p_Aa)+P_A^2(a)+\lambda P_A(a),\\
P_A(a)p_A
&=P_A\bigl(P_A(a)+ap_A+\lambda a\bigr)
 =P_A^2(a)+P_A(ap_A)+\lambda P_A(a).
\end{align*}
Rearranging proves Item~(\ref{it:c}).
\end{proof}

\begin{coro}\label{cor:split}
For an augmented Rota-Baxter algebra $(A,P_A,\varepsilon_A)$, the
sequence of underlying nonunital algebras
$$
0\longrightarrow I_A\longrightarrow A
\mathrel{\mathop{\longrightarrow}^{\varepsilon_A}}\mathbf{k}
\longrightarrow0
$$
is exact and has the distinguished algebra section $u_A$. This section
commutes with the Rota-Baxter operators if and only if $p_A=0$.
\end{coro}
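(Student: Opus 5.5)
The plan is to split Corollary~\ref{cor:split} into two parts: exactness with the algebra section, and then the criterion for the section to be compatible with the operators. Both parts should follow almost directly from Proposition~\ref{prop:unit-identities}(\ref{it:a}).

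\textbf{Exactness and the section.} Since $\varepsilon_A$ is unital, it is surjective, because $\varepsilon_A(c1_A)=c$. Its kernel is $I_A$ by definition, and the inclusion $I_A\to A$ is an injective morphism of nonunital algebras. So the sequence is exact at every spot. The unit map $u_A$ is an algebra morphism, since $u_A(cd)=cd1_A=(c1_A)(d1_A)$. It is also a section, because $\varepsilon_Au_A(c)=c\,\varepsilon_A(1_A)=c$. The resulting splitting $A=\mathbf{k}1_A\oplus I_A$ is exactly the decomposition in Proposition~\ref{prop:unit-identities}(\ref{it:a}).

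\textbf{Compatibility criterion.} The operator on $\mathbf{k}$ is $0$; this is the one for which $\varepsilon_A:(A,P_A)\to(\mathbf{k},0)$ is a Rota-Baxter morphism. So "$u_A$ commutes with the operators" means $P_Au_A=u_A\circ 0=0$. By linearity this is equivalent to $P_A(c1_A)=c\,p_A=0$ for all $c\in\mathbf{k}$. Taking $c=1$ shows this holds if and only if $p_A=0$. It is worth also noting the other compatibility: $\varepsilon_A$ intertwines the operators because $\varepsilon_AP_A=0=0\circ\varepsilon_A$. Hence, once $p_A=0$, the whole split sequence lives in the category of (nonunital) Rota-Baxter algebras, with $Q_A$ on $I_A$ by Proposition~\ref{prop:unit-identities}(\ref{it:a}).

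There is no real obstacle here. The only point needing care is fixing the convention that $\mathbf{k}$ carries the zero operator. Since $\varepsilon_AP_A=0$ forces any operator $R$ on $\mathbf{k}$ compatible with $\varepsilon_A$ to satisfy $R\varepsilon_A=0$, and $\varepsilon_A$ is surjective, $R=0$ is indeed the only consistent choice.
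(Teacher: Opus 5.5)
Your proposal is correct and follows essentially the same route as the paper: exactness from $I_A=\ker\varepsilon_A$ together with $\varepsilon_Au_A=\mathrm{id}_{\mathbf{k}}$, and the criterion from $P_Au_A(c)=c\,p_A$ compared against the zero operator on $\mathbf{k}$. Your extra remark that the zero operator is forced on $\mathbf{k}$ (via $R\varepsilon_A=\varepsilon_AP_A=0$ and surjectivity of $\varepsilon_A$) is a valid small addition that the paper leaves implicit.
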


\begin{proof}
Exactness follows from $I_A=\ker\varepsilon_A$ and
$\varepsilon_Au_A=\mathrm{id}_{\mathbf{k}}$. Since
$P_Au_A(c)=cp_A$ for $c\in\mathbf{k}$, the distinguished section
intertwines the zero operator with $P_A$ exactly when $p_A=0$.
\end{proof}

\subsection{Reconstruction from the augmentation ideal}

A Rota-Baxter operator on a nonunital algebra does not usually extend to a Rota-Baxter operator on its ring-theoretic unitization.
General unitization constructions are discussed in
\cite[Sec.~4.4]{G12}.
The following theorem gives the necessary and sufficient
conditions for an augmented extension of a Rota-Baxter operator to the unitization.

For a nonunital algebra $I$, write
$\widetilde I=\mathbf{k}1\oplus I$ for its ordinary algebra
unitization, where $1$ denotes the adjoined unit. Its multiplication
and augmentation are
$$
(c1+x)(d1+y)=cd1+cy+dx+xy,\qquad
\varepsilon(c1+x)=c
$$
for $c,d\in\mathbf{k}$ and $x,y\in I$.

\begin{theorem}\label{thm:kernel-reconstruction}
Let $(I,Q)$ be a nonunital Rota-Baxter algebra of weight $\lambda$ and
let $p\in I$. Define $P:\widetilde I\to\widetilde I$ by
$$
P(c1+x)=cp+Q(x).
$$
Then $(\widetilde I,P,\varepsilon)$ is an augmented Rota-Baxter algebra
of weight $\lambda$ if and only if
$
p^2=2Q(p)+\lambda p
$
and, for every $x\in I$,
$$
pQ(x)=Q\bigl(px+Q(x)+\lambda x\bigr),\qquad
Q(x)p=Q\bigl(Q(x)+xp+\lambda x\bigr).
$$
\end{theorem}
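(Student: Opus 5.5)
The plan is to prove both directions by reducing the Rota-Baxter identity on $\widetilde I$ to the four cases determined by the decomposition $\widetilde I=\mathbf{k}1\oplus I$. The augmentation conditions come first and are automatic. $\varepsilon$ is a unital algebra morphism by construction of the unitization. Moreover $\varepsilon P(c1+x)=\varepsilon(cp+Q(x))=0$, since $p\in I$ and $Q(I)\subseteq I$. So $(\widetilde I,P,\varepsilon)$ is an augmented Rota-Baxter algebra precisely when $P$ satisfies the Rota-Baxter identity of weight $\lambda$.

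Both sides of
\[
P(u)P(v)=P\bigl(P(u)v+uP(v)+\lambda uv\bigr)
\]
are $\mathbf{k}$-bilinear in $(u,v)$. Hence the identity holds on $\widetilde I$ if and only if it holds for the four pairs $(1,1)$, $(1,x)$, $(x,1)$ and $(x,y)$ with $x,y\in I$.

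\textbf{Case $(x,y)$.} Every term lies in $I$, and $P$ restricts to $Q$ there. The identity is therefore exactly the Rota-Baxter identity for $(I,Q)$, which holds by hypothesis.

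\textbf{Case $(1,1)$.} The left side is $p^2$. The right side is $P(2p+\lambda 1)$. Since $p\in I$, we have $P(p)=Q(p)$, and $P(\lambda 1)=\lambda p$. So the right side is $2Q(p)+\lambda p$, and this case is the condition $p^2=2Q(p)+\lambda p$.

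\textbf{Case $(1,x)$.} The left side is $pQ(x)$. The right side is $P\bigl(px+Q(x)+\lambda x\bigr)$. The argument lies in $I$ because $I$ is an ideal of $\widetilde I$ and $p,Q(x),x\in I$. So the right side equals $Q\bigl(px+Q(x)+\lambda x\bigr)$, and this case is the first displayed condition.

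\textbf{Case $(x,1)$.} By the symmetric computation this case is the second displayed condition, $Q(x)p=Q\bigl(Q(x)+xp+\lambda x\bigr)$.

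Combining the cases: if the three conditions hold, all four cases hold, and bilinearity gives the full identity. Conversely, if $P$ is a Rota-Baxter operator, specializing to $(1,1)$, $(1,x)$ and $(x,1)$ yields the three conditions. The necessity direction also agrees with Proposition~\ref{prop:unit-identities}(\ref{it:c}).

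No step is a serious obstacle. The only point needing care is the bookkeeping that each argument of $P$ in the three mixed cases lies in $I$, so that $P$ may be replaced by $Q$, together with correctly evaluating $P(\lambda 1)=\lambda p$ in the $(1,1)$ case.
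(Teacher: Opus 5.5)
Your proof is correct and follows the paper's argument. Necessity comes from specializing the Rota--Baxter identity at $(1,1)$, $(1,x)$ and $(x,1)$, and your bilinear reduction to four cases is the same expansion the paper carries out in a single computation for $c1+x$ and $d1+y$, using the three conditions, the Rota--Baxter identity for $Q$, and $\varepsilon P=0$ from $P(\widetilde I)\subseteq I$.
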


\begin{proof}
Suppose first that $(\widetilde I,P,\varepsilon)$ is an augmented
Rota-Baxter algebra. Since $P(1)=p$ and $P(x)=Q(x)$ for $x\in I$, the
Rota-Baxter identity applied to $1$ and $1$ gives
$$
p^2=P(2p+\lambda1)=2Q(p)+\lambda p.
$$
For $x\in I$, applying the identity to $1$ and $x$ gives
$$
pQ(x)=P\bigl(px+Q(x)+\lambda x\bigr)
=Q\bigl(px+Q(x)+\lambda x\bigr).
$$
Applying it to $x$ and $1$ gives
$$
Q(x)p=P\bigl(Q(x)+xp+\lambda x\bigr)
=Q\bigl(Q(x)+xp+\lambda x\bigr).
$$
This proves the necessity of the three conditions.

Conversely, assume that the three conditions hold. For
$c,d\in\mathbf{k}$ and $x,y\in I$, we have
\begin{align*}
P(c1+x)P(d1+y)
={}&cdp^2+cpQ(y)+dQ(x)p+Q(x)Q(y)\\
={}&cd\bigl(2Q(p)+\lambda p\bigr)
 +cQ\bigl(py+Q(y)+\lambda y\bigr)\\
&+dQ\bigl(Q(x)+xp+\lambda x\bigr)
 +Q\bigl(Q(x)y+xQ(y)+\lambda xy\bigr)\\
={}&P\Bigl(\lambda cd1+2cdp
 +c\bigl(py+Q(y)+\lambda y\bigr)\\
&+d\bigl(Q(x)+xp+\lambda x\bigr)
 +Q(x)y+xQ(y)+\lambda xy\Bigr)\\
={}&P\bigl(P(c1+x)(d1+y)+(c1+x)P(d1+y)
                 +\lambda(c1+x)(d1+y)\bigr).
\end{align*}
Here the second equality uses the three assumed conditions and
the Rota-Baxter identity for $Q$.
Thus $P$ satisfies the Rota-Baxter identity on $\widetilde I$.
Also $P(\widetilde I)\subseteq I=\ker\varepsilon$, so
$\varepsilon P=0$.
\end{proof}

Let $\mathbf{RBTrip}_\lambda$ be the category whose objects are the
triples $(I,Q,p)$ in Theorem \ref{thm:kernel-reconstruction} and whose
morphisms $g:(I,Q,p)\to(J,Q',p')$ are nonunital algebra morphisms
satisfying $gQ=Q'g$ and $g(p)=p'$.

\begin{coro}\label{cor:reconstruction-equivalence}
Restriction to the augmentation ideal and ordinary unitization give an
equivalence
$$
\mathbf{AugRBAlg}_\lambda\simeq\mathbf{RBTrip}_\lambda.
$$
\end{coro}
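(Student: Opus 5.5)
I will construct two functors and exhibit natural isomorphisms between their composites and the identities.

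\emph{Restriction functor} $R:\mathbf{AugRBAlg}_\lambda\to\mathbf{RBTrip}_\lambda$. It sends $(A,P_A,\varepsilon_A)$ to $(I_A,Q_A,p_A)$. By Proposition~\ref{prop:unit-identities}(\ref{it:a}), $Q_A$ is a Rota-Baxter operator of weight $\lambda$ on $I_A$. By Item~(\ref{it:c}), $p_A\in I_A$ and the three identities of Theorem~\ref{thm:kernel-reconstruction} hold; restricting Item~(\ref{it:c}) to $a=x\in I_A$ gives exactly the two mixed identities. A morphism $f$ satisfies $\varepsilon_Bf=\varepsilon_A$, so $f(I_A)\subseteq I_B$. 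Its restriction $g=f|_{I_A}$ commutes with the $Q$'s, and $g(p_A)=f(P_A(1_A))=P_B(f(1_A))=p_B$ since $f$ is unital. Functoriality is clear.

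\emph{Unitization functor} $U$ in the other direction. It sends $(I,Q,p)$ to $(\widetilde I,P,\varepsilon)$ with $P(c1+x)=cp+Q(x)$; this is an object by the sufficiency part of Theorem~\ref{thm:kernel-reconstruction}. A morphism $g$ goes to $\tilde g(c1+x)=c1+g(x)$. This map is a unital algebra morphism preserving $\varepsilon$, and it satisfies
$$
\tilde gP(c1+x)=cg(p)+g(Q(x))=cp'+Q'(g(x))=P'\tilde g(c1+x).
$$

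\emph{Natural isomorphisms.} The composite $RU$ is literally the identity: $\ker\varepsilon=I$, $P|_I=Q$, $P(1)=p$, and $\tilde g|_I=g$. For $UR$, define $\theta_A:\widetilde{I_A}\to A$ by $c1+x\mapsto c1_A+x$. This is a linear bijection by the decomposition $A=\mathbf{k}1_A\oplus I_A$ of Proposition~\ref{prop:unit-identities}(\ref{it:a}). It is multiplicative because $I_A$ is an ideal and the multiplication of $A$ on $\mathbf{k}1_A\oplus I_A$ agrees with the unitization formula. It preserves augmentations, and it intertwines the operators: $P_A(c1_A+x)=cp_A+Q_A(x)$. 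Naturality in $f$ is the identity $f(c1_A+x)=c1_B+f(x)$.

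The only point needing any care is that morphisms in $\mathbf{RBTrip}_\lambda$ are nonunital algebra maps, while $\tilde g$ must be unital; the construction of $\tilde g$ handles this. Everything else is routine bookkeeping, and the substantive input is Theorem~\ref{thm:kernel-reconstruction} together with Proposition~\ref{prop:unit-identities}.
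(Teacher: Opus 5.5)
Your proposal is correct and follows the same route as the paper. You restrict to $(I_A,Q_A,p_A)$, unitize morphisms by $\widetilde g(c1+x)=c1+g(x)$, and use the natural isomorphism $c1+x\mapsto c1_A+x$. The only difference is that you write out checks the paper leaves implicit, for instance getting the three triple identities directly from Proposition~\ref{prop:unit-identities}(\ref{it:c}) rather than from the necessity half of Theorem~\ref{thm:kernel-reconstruction}.
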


\begin{proof}
For an augmented Rota-Baxter algebra, use
$A=\mathbf{k}1_A\oplus I_A$, $Q_A=P_A|_{I_A}$ and $p_A=P_A(1_A)$.
Theorem \ref{thm:kernel-reconstruction} shows that restriction and
unitization recover $A$ through the natural isomorphism
$\mathbf{k}1\oplus I_A\to A$, $c1+x\mapsto c1_A+x$.  A morphism restricts to the
augmentation ideals, while a morphism $g:I\to J$ of triples has the
unique augmented unital extension
$\widetilde g(c1+x)=c1+g(x)$.  These extensions and restrictions commute with composition, and
the displayed isomorphisms are natural.
\end{proof}

\begin{remark}
For a commutative augmentation ideal, the two mixed identities in Theorem
\ref{thm:kernel-reconstruction} coincide. In general they record the two
ways in which the element $p=P(1)$ controls the failure of
$P^2=-\lambda P$.
\end{remark}

An augmented extension to the ordinary unitization need not exist.
For example, let $\mathbf{k}$ be a field, let $I=\mathbf{k}x$
with $I^2=0$, and take $Q=\mathrm{id}_I$ at weight zero.
All products in $I$ vanish, so $Q$ satisfies the Rota-Baxter
identity. For every $p\in I$, the mixed identities in
Theorem \ref{thm:kernel-reconstruction} would give $Q^2=0$,
contradicting $Q(x)=x\neq0$.

\subsection{Normalized augmented Rota-Baxter algebras}

\begin{defn}
An augmented Rota-Baxter algebra $(A,P_A,\varepsilon_A)$ is
\textbf{normalized} if $P_A(1_A)=0$.
\end{defn}

For a normalized augmented Rota-Baxter algebra, Proposition
\ref{prop:unit-identities}(\ref{it:c}) implies $P_A^2=-\lambda P_A$. Operators satisfying
this relation are usually called quasi-idempotent~\cite{Ag20}. When $\lambda$ is
invertible, their description in terms of a direct sum of two subalgebras is
standard. See \cite{GL21}. We record the consequences
of normalization in the augmented setting.

\begin{theorem}\label{thm:unit-zero}
Let $(A,P_A,\varepsilon_A)$ be an augmented Rota-Baxter algebra of weight
$\lambda$.
\begin{enumerate}
\item\label{it:uz-a} The following are equivalent:
\begin{enumerate}
\makeatletter
\renewcommand{\p@enumii}{}
\makeatother
\item\label{it:uz-i} $(A,P_A,\varepsilon_A)$ is normalized.
\item\label{it:uz-ii} the unit morphism $u_A:(\mathbf{k},0)\longrightarrow(A,P_A)$
is a morphism of Rota-Baxter algebras.
\end{enumerate}
If these conditions hold, then
$$
P_A^2=-\lambda P_A.
$$

\item\label{it:uz-b} The category of normalized augmented Rota-Baxter algebras
is equivalent to the category of nonunital Rota-Baxter algebras
$(I,Q)$ of weight $\lambda$ satisfying
$$
Q^2=-\lambda Q,
$$
with Rota-Baxter algebra morphisms. The functors are restriction
to the augmentation ideal and ordinary unitization.

\item\label{it:uz-c} Assume that $\lambda$ is invertible in $\mathbf{k}$. Then a normalized augmented Rota-Baxter algebra is equivalently an augmented algebra $A$
whose augmentation ideal decomposes as a direct sum
$$
I_A=I_A^-\oplus I_A^+
$$
into two nonunital subalgebras. Under this correspondence,
$$
P_A(c1_A+x_-+x_+)=-\lambda x_-,
\qquad
x_-\in I_A^-,\quad x_+\in I_A^+.
$$
A morphism between two such objects is an augmented algebra morphism that
preserves both summands.
\end{enumerate}
\end{theorem}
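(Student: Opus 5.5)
Part (\ref{it:uz-a}) is immediate from Corollary~\ref{cor:split}. The unit map $u_A$ is a unital algebra morphism, and $P_Au_A(c)=cp_A$ while the operator on $\mathbf{k}$ is $0$. So $u_A$ intertwines the operators exactly when $p_A=0$. If $p_A=0$, Proposition~\ref{prop:unit-identities}(\ref{it:c}) reads $P_A^2(a)+\lambda P_A(a)=0$ for every $a$, that is, $P_A^2=-\lambda P_A$.

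For part (\ref{it:uz-b}) I would restrict the equivalence of Corollary~\ref{cor:reconstruction-equivalence} to the full subcategory of triples with $p=0$. For such triples the condition $p^2=2Q(p)+\lambda p$ of Theorem~\ref{thm:kernel-reconstruction} holds trivially. The two mixed identities both reduce to $0=Q(Q(x)+\lambda x)$, i.e.\ $Q^2=-\lambda Q$. Hence objects $(I,Q,0)$ of $\mathbf{RBTrip}_\lambda$ are exactly nonunital Rota--Baxter algebras with $Q^2=-\lambda Q$. Morphisms of triples must satisfy $g(0)=0$, which is automatic, so they are just nonunital Rota--Baxter morphisms. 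Unitization sends such a triple to an object with $P(1)=0$, so the equivalence restricts as claimed.

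For part (\ref{it:uz-c}), with $\lambda$ invertible, I would use part (\ref{it:uz-b}) and work on $I=I_A$. Given $Q$ with $Q^2=-\lambda Q$, put $\pi=-\lambda^{-1}Q$. Then $\pi^2=\pi$, so $I=I^-\oplus I^+$ with $I^-=\operatorname{im}Q=\operatorname{im}\pi$ and $I^+=\ker Q$. On this decomposition $Q(x_-+x_+)=-\lambda x_-$.

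It remains to show both summands are subalgebras. $\operatorname{im}Q$ is one because $Q(x)Q(y)=Q(\cdots)$, as in Proposition~\ref{prop:unit-identities}(\ref{it:b}). For $x,y\in\ker Q$, the Rota--Baxter identity gives $0=\lambda Q(xy)$, hence $xy\in\ker Q$ since $\lambda$ is invertible.

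Conversely, given $I=I^-\oplus I^+$ with both summands subalgebras, define $Q(x_-+x_+)=-\lambda x_-$. Then $Q^2=-\lambda Q$ is clear, and the Rota--Baxter identity is the standard check that $-\lambda$ times the projection onto $I^-$ along $I^+$ is a weight-$\lambda$ operator (cf.~\cite{GL21}). I would verify it directly by expanding both sides on pairs $(x_\pm,y_\pm)$ and using only the subalgebra property. The constructions are mutually inverse, and unitization gives the displayed formula for $P_A$.

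The step needing the most care is the morphism statement. A Rota--Baxter morphism $g$ satisfies $gQ=Q'g$, so it maps $\operatorname{im}Q$ into $\operatorname{im}Q'$ and $\ker Q$ into $\ker Q'$, i.e.\ it preserves both summands. Conversely, preserving both summands gives $g(-\lambda x_-)=-\lambda g(x_-)$ and $g(x_+)\in I^{+\prime}$, hence $gQ=Q'g$. Composing with part (\ref{it:uz-b}) then finishes the proof.
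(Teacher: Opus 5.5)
Your proposal is correct and follows essentially the same route as the paper. Part (a) uses $P_Au_A(c)=cp_A$ together with Proposition~\ref{prop:unit-identities}(\ref{it:c}). Part (b) sets $p=0$ in Theorem~\ref{thm:kernel-reconstruction}. Part (c) is the image/kernel splitting of the quasi-idempotent operator, with $\lambda$ invertible giving closure of the kernel.

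The only cosmetic difference is in the converse of (c). The paper applies the standard splitting construction directly to $A=I_A^-\oplus(\mathbf{k}1_A\oplus I_A^+)$, citing \cite{GL21}. You instead check the identity for $-\lambda$ times the projection on the ideal and then pass through part~(b). That check works, since it reduces to $\pi(x)\pi(y)=\pi(\pi(x)y+x\pi(y)-xy)$, which needs only the subalgebra property and no unit.
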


\begin{proof}
(\ref{it:uz-a}) By definition, $(A,P_A,\varepsilon_A)$ is normalized exactly when $P_A(1_A)=0$. The unit morphism intertwines the zero operator on $\mathbf{k}$ with $P_A$ if and only if $P_A(1_A)=0$. This proves the equivalence of (\ref{it:uz-i}) and (\ref{it:uz-ii}). With
$p_A=0$, the identity in Proposition \ref{prop:unit-identities}(\ref{it:c}) reduces
to $P_A^2=-\lambda P_A$.

(\ref{it:uz-b}) Theorem \ref{thm:kernel-reconstruction} with $p=0$ shows that the mixed
conditions are exactly $Q^2=-\lambda Q$. This proves (\ref{it:uz-b}), including its statement about morphisms.

(\ref{it:uz-c}) Since $\lambda$ is invertible and
$P_A^2=-\lambda P_A$, every $x\in I_A$ has the decomposition
$$
x=-\lambda^{-1}P_A(x)+\bigl(x+\lambda^{-1}P_A(x)\bigr)
\in\operatorname{Im}(P_A|_{I_A})+\ker(P_A|_{I_A}).
$$
The intersection is zero: if $x=P_A(y)$ and $P_A(x)=0$, then
$0=P_A^2(y)=-\lambda x$.
The image is closed under multiplication by the Rota-Baxter
identity. If $P_A(x)=P_A(y)=0$, the same identity gives
$\lambda P_A(xy)=0$, so the kernel is also closed under
multiplication. Thus
$$
I_A=\operatorname{Im}(P_A|_{I_A})\oplus\ker(P_A|_{I_A})
$$
is the standard decomposition into subalgebras
\cite[Proposition~2.3]{GL21}.
These are $I_A^-$ and $I_A^+$, respectively.
Conversely, apply the same splitting construction to
$A=I_A^-\oplus(\mathbf{k}1_A\oplus I_A^+)$.
It gives $P_A=-\lambda$ times the projection onto $I_A^-$.
This operator has image in $I_A$ and annihilates $1_A$, so it is
augmented and normalized. Finally, an augmented algebra morphism
commutes with these projections exactly when it preserves both summands.
\end{proof}

At weight zero, normalization requires the restricted operator to have
square zero. For invertible weight, it makes the operator a scalar
multiple of a projection. Both cases will be used in the finite
dimensional examples of Section \ref{subsec:braid}.

\section{Rota-Baxter operators on tensor products}\label{sec:tp}

We construct a Rota-Baxter operator on the ordinary tensor product algebra $A\otimes B$ using the augmentations of the two tensor factors and show that the resulting product defines a monoidal structure. We then study the flip-conjugated construction, prove a uniqueness result within a natural class of linear combinations, and examine symmetry and braidings.

\subsection{Construction and monoidal structure}
The tensor product $P_A\otimes P_B$ of two Rota-Baxter operators need not be a Rota-Baxter operator on $A\otimes B$.

\begin{exam}\label{ex:tensor-obstruction}
Assume that $\mathbf{k}$ is a field of characteristic zero. Let
$A=\mathbf{k}[x]$ and let $P:A\to A$ be the integration operator
$P(f)(x)=\int_0^x f(t)\,dt$. Although $P$ is a Rota-Baxter operator of
weight zero, $P\otimes P$ is not a Rota-Baxter operator on $A\otimes A$.
Now $(P\otimes P)(1_A\otimes1_A)=x\otimes x$. If $P\otimes P$
satisfied the Rota-Baxter identity of weight zero, evaluation at
$(1_A\otimes1_A,1_A\otimes1_A)$ would give
$$
(x\otimes x)^2
=(P\otimes P)\bigl(2x\otimes x\bigr)
=P(2x)\otimes P(x)
=x^2\otimes\frac{x^2}{2},
$$
This contradicts $(x\otimes x)^2=x^2\otimes x^2$.
Thus the tensor product of the two operators does not give
the required weight-zero Rota-Baxter structure.
\end{exam}

We now show that the augmentations provide a replacement for $P_A\otimes P_B$ on the ordinary tensor product algebra. 

\begin{theorem}\label{thm:tensor-product}
Let $(A,P_A,\varepsilon_A)$ and $(B,P_B,\varepsilon_B)$ be
augmented Rota-Baxter algebras of weight $\lambda$ over $\mathbf{k}$.
Equip $A\otimes B$ with its ordinary tensor product multiplication
and unit $1_A\otimes1_B$. The linear operator
$$
P_{A\boxtimes B}=P_A\otimes e_B+\mathrm{id}_A\otimes P_B,
\qquad e_B=u_B\varepsilon_B,
$$
together with the augmentation $\varepsilon_A\otimes\varepsilon_B$,
makes $A\otimes B$ an augmented Rota-Baxter algebra of weight
$\lambda$. In particular, for $a\in A$ and $b\in B$, we have
$$
P_{A\boxtimes B}(a\otimes b)
=P_A(a)\otimes\varepsilon_B(b)1_B+a\otimes P_B(b).
$$
\end{theorem}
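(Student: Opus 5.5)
By bilinearity and the fact that $A\otimes B$ is spanned by pure tensors, it suffices to verify the Rota-Baxter identity for $P:=P_{A\boxtimes B}$ on pairs $x=a\otimes b$, $y=a'\otimes b'$, and to check $(\varepsilon_A\otimes\varepsilon_B)P=0$. The augmentation condition is immediate. Since $\varepsilon_B e_B=\varepsilon_B$, we have
$$(\varepsilon_A\otimes\varepsilon_B)(P_A\otimes e_B)=\varepsilon_AP_A\otimes\varepsilon_B=0,$$
and $(\varepsilon_A\otimes\varepsilon_B)(\mathrm{id}_A\otimes P_B)=\varepsilon_A\otimes\varepsilon_BP_B=0$. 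Moreover, $\varepsilon_A\otimes\varepsilon_B$ is a unital algebra morphism because it is a tensor product of such.

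For the Rota-Baxter identity, I write $\beta=\varepsilon_B(b)$ and $\beta'=\varepsilon_B(b')$, so that $P(a\otimes b)=\beta P_A(a)\otimes1_B+a\otimes P_B(b)$. Expanding the left side $P(x)P(y)$ produces four terms:
$$\beta\beta' P_A(a)P_A(a')\otimes 1_B+\beta P_A(a)a'\otimes P_B(b')+\beta' aP_A(a')\otimes P_B(b)+aa'\otimes P_B(b)P_B(b').$$
On the right side I expand $P(x)y+xP(y)+\lambda xy$ into five pure-tensor terms and apply $P$ to each, using two key facts. The first is that $e_B$ is multiplicative with $e_B(1_B)=1_B$. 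The second is that $e_BP_B=0$, so any term whose $B$-component lies in the image of $P_B$ is killed by $P_A\otimes e_B$.

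The $P_A\otimes e_B$ part then collects
$$\beta\beta' P_A\bigl(P_A(a)a'+aP_A(a')+\lambda aa'\bigr)\otimes1_B,$$
which equals $\beta\beta' P_A(a)P_A(a')\otimes1_B$ by the Rota-Baxter identity in $A$. The $\mathrm{id}_A\otimes P_B$ part contributes
$$\beta P_A(a)a'\otimes P_B(b')+\beta' aP_A(a')\otimes P_B(b)+aa'\otimes P_B\bigl(P_B(b)b'+bP_B(b')+\lambda bb'\bigr),$$
and the last term equals $aa'\otimes P_B(b)P_B(b')$ by the Rota-Baxter identity in $B$. To see the first two terms, observe that $\mathrm{id}_A\otimes P_B$ applied to $\beta P_A(a)a'\otimes b'$ gives the first term directly, while applied to $a\,P_A(a')\otimes b\,\beta'$ it gives the second.

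The main obstacle is bookkeeping: making sure the mixed terms match exactly, with no leftover $P_A(a)a'\otimes P_B(b)b'$-type contributions. This is where $e_BP_B=0$ is essential, and I would organize the check as a term-by-term table to confirm that every cross term vanishes or matches. Finally, the displayed formula for $P_{A\boxtimes B}(a\otimes b)$ is just the definition unwound, using $e_B(b)=\varepsilon_B(b)1_B$.
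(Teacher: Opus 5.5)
Your proposal is correct and follows essentially the same route as the paper: you check that $(\varepsilon_A\otimes\varepsilon_B)P_{A\boxtimes B}=0$, reduce to pure tensors, split the right-hand side into its $P_A\otimes e_B$ and $\mathrm{id}_A\otimes P_B$ contributions, and close with the Rota--Baxter identities in $A$ and $B$. One point of precision: the terms $aa'\otimes P_B(b)b'$ and $aa'\otimes bP_B(b')$ vanish under $P_A\otimes e_B$ because $e_B(P_B(b)b')=e_B(P_B(b))\,e_B(b')=0$, which uses multiplicativity of $e_B$ together with $e_BP_B=0$. Your wording (``$B$-component lies in the image of $P_B$'') does not quite cover this, since $P_B(b)b'$ itself need not lie in $\operatorname{Im}P_B$.
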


\begin{proof}
The map $\varepsilon_A\otimes\varepsilon_B$ is a unital algebra
morphism, and
\begin{align*}
(\varepsilon_A\otimes\varepsilon_B)P_{A\boxtimes B}
&=(\varepsilon_AP_A)\otimes(\varepsilon_Be_B)
  +\varepsilon_A\otimes(\varepsilon_BP_B)\\
&=0\qquad
  (\text{since }\varepsilon_AP_A=0\text{ and }\varepsilon_BP_B=0).
\end{align*}
It remains to verify the Rota-Baxter identity. By bilinearity,
it suffices to evaluate it at $a\otimes b$ and $a'\otimes b'$,
where $a,a'\in A$ and $b,b'\in B$.
Since $e_B$ is multiplicative, has scalar image and satisfies
$e_BP_B=0$, we have
$$
e_B(P_B(b)b')=e_B(bP_B(b'))=0,\qquad
P_B(e_B(b)b')=e_B(b)P_B(b').
$$
We also have $P_B(be_B(b'))=P_B(b)e_B(b')$.
These identities give
\begin{align*}
&P_{A\boxtimes B}
 \Bigl(P_{A\boxtimes B}(a\otimes b)(a'\otimes b')
 +(a\otimes b)P_{A\boxtimes B}(a'\otimes b')
 +\lambda aa'\otimes bb'\Bigr)\\
={}&(P_A\otimes e_B+\mathrm{id}_A\otimes P_B)
 \Bigl(P_A(a)a'\otimes e_B(b)b'
       +aP_A(a')\otimes be_B(b')\\
&\hspace{46mm}
       +aa'\otimes\bigl(P_B(b)b'+bP_B(b')+\lambda bb'\bigr)\Bigr)
 \displaybreak[1]\\
={}&P_A\bigl(P_A(a)a'+aP_A(a')+\lambda aa'\bigr)
       \otimes e_B(b)e_B(b')\\
&+aa'\otimes P_B\bigl(P_B(b)b'+bP_B(b')+\lambda bb'\bigr)\\
&+P_A(a)a'\otimes e_B(b)P_B(b')\\
&+aP_A(a')\otimes P_B(b)e_B(b')
 \qquad(\text{since }e_BP_B=0)\displaybreak[1]\\
={}&P_A(a)P_A(a')\otimes e_B(b)e_B(b')
 +aa'\otimes P_B(b)P_B(b')\\
&+P_A(a)a'\otimes e_B(b)P_B(b')\\
&+aP_A(a')\otimes P_B(b)e_B(b')
 \qquad(\text{by Definition }\ref{def:rb})\displaybreak[1]\\
={}&\bigl(P_A(a)\otimes e_B(b)+a\otimes P_B(b)\bigr)
     \bigl(P_A(a')\otimes e_B(b')+a'\otimes P_B(b')\bigr)\\
={}&P_{A\boxtimes B}(a\otimes b)P_{A\boxtimes B}(a'\otimes b').
\end{align*}
\end{proof}

We write
$$
(A,P_A,\varepsilon_A)\boxtimes(B,P_B,\varepsilon_B)
:=(A\otimes B,P_{A\boxtimes B},\varepsilon_A\otimes\varepsilon_B)
$$
and refer the reader to~
\cite{AM10,K95,M98} for standard terminology and conventions for monoidal categories. 
\begin{theorem}\label{thm:monoidal}
The product $\boxtimes$, acting on morphisms by
$f\boxtimes g=f\otimes g$, defines a bifunctor on
$\mathbf{AugRBAlg}_\lambda$.
Together with the unit object
$\mathbf{1}=(\mathbf{k},0,\mathrm{id}_{\mathbf{k}})$ and the
natural constraints
\begin{align*}
\alpha_{A,B,C}:(A\boxtimes B)\boxtimes C
 &\longrightarrow A\boxtimes(B\boxtimes C),
 &(a\otimes b)\otimes c&\longmapsto a\otimes(b\otimes c),\\
\ell_A:\mathbf{1}\boxtimes A&\longrightarrow A,
 &t\otimes a&\longmapsto ta,\\
\rho_A:A\boxtimes\mathbf{1}&\longrightarrow A,
 &a\otimes t&\longmapsto ta,
\end{align*}
where $a\in A$, $b\in B$, $c\in C$ and $t\in\mathbf{k}$,
it makes $\mathbf{AugRBAlg}_\lambda$ a monoidal category.
\end{theorem}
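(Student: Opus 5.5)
The plan is to reduce everything to the underlying monoidal category of augmented algebras with the ordinary tensor product. Theorem~\ref{thm:tensor-product} already makes $A\boxtimes B$ an object, so we only need to check that the structure maps are morphisms. If they are, the pentagon and triangle axioms and the naturality of $\alpha,\ell,\rho$ follow from the corresponding facts for $\mathbf{k}$-modules, because the forgetful functor to $\mathbf{k}$-modules is faithful. We also need the underlying maps to be unital algebra morphisms compatible with augmentations, which is standard.

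\textbf{Step 1: bifunctoriality.} Let $f:A\to A'$ and $g:B\to B'$ be morphisms. Then $f\otimes g$ is a unital algebra morphism, and $(\varepsilon_{A'}\otimes\varepsilon_{B'})(f\otimes g)=\varepsilon_A\otimes\varepsilon_B$. From $\varepsilon_{B'}g=\varepsilon_B$ and unitality of $g$ we get $ge_B=e_{B'}g$. Hence
\[
(f\otimes g)(P_A\otimes e_B+\mathrm{id}_A\otimes P_B)=(P_{A'}\otimes e_{B'}+\mathrm{id}_{A'}\otimes P_{B'})(f\otimes g).
\]
Functoriality in composition and identities is inherited from $\otimes$.

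\textbf{Step 2: the associator.} This is the main computation, though it is routine. We check $\alpha P_{(A\boxtimes B)\boxtimes C}=P_{A\boxtimes(B\boxtimes C)}\alpha$ on $a\otimes b\otimes c$. First note that $e_{A\otimes B}=e_A\otimes e_B$, since the augmentation is $\varepsilon_A\otimes\varepsilon_B$.

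On the left-hand side,
\[
P_{(A\boxtimes B)\boxtimes C}=P_{A\boxtimes B}\otimes e_C+\mathrm{id}\otimes P_C=P_A\otimes e_B\otimes e_C+\mathrm{id}_A\otimes P_B\otimes e_C+\mathrm{id}_A\otimes\mathrm{id}_B\otimes P_C.
\]
On the right-hand side,
\[
P_A\otimes e_{B\otimes C}+\mathrm{id}_A\otimes P_{B\boxtimes C}=P_A\otimes e_B\otimes e_C+\mathrm{id}_A\otimes(P_B\otimes e_C+\mathrm{id}_B\otimes P_C).
\]
The two three-term sums agree after identifying the tensor factors by $\alpha$. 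The augmentations also agree, so $\alpha$ is an isomorphism in $\mathbf{AugRBAlg}_\lambda$ with inverse the reverse rebracketing.

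\textbf{Step 3: the unit constraints.} The unit object carries the zero operator and $e_{\mathbf{k}}=\mathrm{id}_{\mathbf{k}}$. Therefore
\[
P_{\mathbf{1}\boxtimes A}=0\otimes\ldots+\mathrm{id}_{\mathbf{k}}\otimes P_A,
\]
so $\ell_A P_{\mathbf{1}\boxtimes A}=P_A\ell_A$. On the other side,
\[
P_{A\boxtimes\mathbf{1}}=P_A\otimes\mathrm{id}_{\mathbf{k}}+\mathrm{id}_A\otimes 0,
\]
so $\rho_A P_{A\boxtimes\mathbf{1}}=P_A\rho_A$. Both maps are unital algebra isomorphisms respecting augmentations, since $\varepsilon_{\mathbf{k}}=\mathrm{id}$.

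The only point needing care is that the asymmetric formula still gives \emph{both} unit constraints. This works because $P_{\mathbf{1}}=0$ kills one term and $e_{\mathbf{k}}=\mathrm{id}$ restores the other. Coherence then follows as stated at the outset.
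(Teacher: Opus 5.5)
Your proposal is correct and follows the paper's proof essentially step for step. It uses $ge_B=e_{B'}g$ for bifunctoriality, $e_{B\otimes C}=e_B\otimes e_C$ for the associator, and $P_{\mathbf{1}}=0$ together with $e_{\mathbf{1}}=\mathrm{id}_{\mathbf{k}}$ for the unit constraints, with naturality and the pentagon and triangle identities inherited from the ordinary tensor product constraints.
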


\begin{proof}
For augmented Rota-Baxter morphisms $f:A\to A'$ and $g:B\to B'$,
we have $ge_B=e_{B'}g$. Thus
\begin{align*}
(f\otimes g)P_{A\boxtimes B}
&=(fP_A)\otimes(ge_B)+f\otimes(gP_B)\\
&=(P_{A'}f)\otimes(e_{B'}g)+f\otimes(P_{B'}g)\\
&=P_{A'\boxtimes B'}(f\otimes g),\\
(\varepsilon_{A'}\otimes\varepsilon_{B'})(f\otimes g)
&=(\varepsilon_{A'}f)\otimes(\varepsilon_{B'}g)
 =\varepsilon_A\otimes\varepsilon_B.
\end{align*}
Since the ordinary tensor product preserves identities and
composition, these maps define the asserted bifunctor.

For $a\in A$, $b\in B$ and $c\in C$, we have
\begin{align*}
&\alpha_{A,B,C}P_{(A\boxtimes B)\boxtimes C}
  \bigl((a\otimes b)\otimes c\bigr)\\
={}&P_A(a)\otimes\bigl(e_B(b)\otimes e_C(c)\bigr)
   +a\otimes\bigl(P_B(b)\otimes e_C(c)\bigr)\\
&+a\otimes\bigl(b\otimes P_C(c)\bigr)\\
={}&P_A(a)\otimes e_{B\otimes C}(b\otimes c)\\
&+a\otimes P_{B\boxtimes C}(b\otimes c)
 \qquad(\text{since }e_{B\otimes C}=e_B\otimes e_C)\\
={}&P_{A\boxtimes(B\boxtimes C)}
  \alpha_{A,B,C}\bigl((a\otimes b)\otimes c\bigr).
\end{align*}
For $t\in\mathbf{k}$ and $a\in A$, the equalities
$P_{\mathbf{1}}=0$ and
$e_{\mathbf{1}}=\mathrm{id}_{\mathbf{k}}$ give
\begin{align*}
\ell_A P_{\mathbf{1}\boxtimes A}(t\otimes a)
&=\ell_A(t\otimes P_A(a))
 =tP_A(a)=P_A\ell_A(t\otimes a),\\
\rho_A P_{A\boxtimes\mathbf{1}}(a\otimes t)
&=\rho_A(P_A(a)\otimes t)
 =tP_A(a)=P_A\rho_A(a\otimes t).
\end{align*}
The associator and unit constraints are unital algebra isomorphisms
and preserve the augmentations, so these calculations make them
augmented Rota-Baxter isomorphisms. Their naturality and the pentagon
and triangle identities hold because the underlying maps are the
ordinary tensor product constraints.
\end{proof}

In particular,
$P_{A\boxtimes B}(1_A\otimes1_B)
=p_A\otimes1_B+1_A\otimes p_B$.
Therefore normalized objects form a full monoidal subcategory.

\begin{coro}\label{thm:iterated}
For augmented Rota-Baxter algebras $(A_i,P_i,\varepsilon_i)$,
$1\leq i\leq n$, the Rota-Baxter operator on the iterated product is
$$
P_{A_1\boxtimes\cdots\boxtimes A_n}
=\sum_{i=1}^{n}
\mathrm{id}_{A_1}\otimes\cdots\otimes\mathrm{id}_{A_{i-1}}
\otimes P_i\otimes e_{A_{i+1}}\otimes\cdots\otimes e_{A_n}.
$$
Empty strings of tensor factors are omitted. The formula is
independent of the parenthesization under the ordinary associators.
\end{coro}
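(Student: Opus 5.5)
The plan is induction on $n$, building the iterated product by successive applications of $\boxtimes$ and then checking that the associator carries one bracketing's operator to another's. For $n=1$ the formula reads $P_{A_1}=P_1$. For $n=2$ it is the definition $P_{A_1\boxtimes A_2}=P_1\otimes e_{A_2}+\mathrm{id}_{A_1}\otimes P_2$ from Theorem~\ref{thm:tensor-product}. I will use one auxiliary fact throughout: for augmented algebras, the augmentation of $B\otimes C$ is $\varepsilon_B\otimes\varepsilon_C$, so $e_{B\otimes C}=e_B\otimes e_C$. By induction this gives $e_{A_{i+1}\otimes\cdots\otimes A_n}=e_{A_{i+1}}\otimes\cdots\otimes e_{A_n}$, as already used in the proof of Theorem~\ref{thm:monoidal}.

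First I treat the right-nested bracketing $A_1\boxtimes(A_2\boxtimes(\cdots\boxtimes A_n))$. Write $B=A_2\boxtimes\cdots\boxtimes A_n$ (right-nested). The definition gives $P_{A_1\boxtimes B}=P_1\otimes e_B+\mathrm{id}_{A_1}\otimes P_B$. Replacing $e_B$ by $e_{A_2}\otimes\cdots\otimes e_{A_n}$ produces the $i=1$ term. Substituting the induction hypothesis for $P_B$ and tensoring each of its summands on the left with $\mathrm{id}_{A_1}$ produces exactly the terms $i=2,\dots,n$.

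Next I treat an arbitrary parenthesization. Any bracketed product splits at its outermost level as $X\boxtimes Y$, where $X$ is a bracketing of $A_1,\dots,A_k$ and $Y$ is a bracketing of $A_{k+1},\dots,A_n$, with $1\le k<n$. Identify both $X$ and $Y$ with the flat tensor products through the ordinary associators, and apply the induction hypothesis to each. Then
\[
P_{X\boxtimes Y}=P_X\otimes e_Y+\mathrm{id}_X\otimes P_Y .
\]
In the first summand, $P_X$ contributes the terms $i\le k$ through $\mathrm{id}\otimes\cdots\otimes P_i\otimes e_{A_{i+1}}\otimes\cdots\otimes e_{A_k}$. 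Tensoring with $e_Y=e_{A_{k+1}}\otimes\cdots\otimes e_{A_n}$ extends the trailing string of $e$'s up to $A_n$. In the second summand, $\mathrm{id}_X=\mathrm{id}_{A_1}\otimes\cdots\otimes\mathrm{id}_{A_k}$, and tensoring it with $P_Y$ gives the terms $i>k$. So every bracketing yields the same displayed sum.

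The step that needs care is the justification that "identify via the ordinary associators" is legitimate. This is exactly the content of Theorem~\ref{thm:monoidal}: the associators $\alpha$ are morphisms in $\mathbf{AugRBAlg}_\lambda$, so they intertwine the operators. Coherence (the pentagon for the underlying $\mathbf{k}$-module constraints) then makes the canonical isomorphism from any bracketing to the flat tensor product unique. Hence the operator transported along this isomorphism is well defined, and by the computation above it equals the displayed formula. Empty strings are handled by the convention that empty tensor products of identities or of $e$'s are omitted; this covers the $i=1$ and $i=n$ terms, where there is no string of identities on the left or no string of $e$'s on the right, respectively.
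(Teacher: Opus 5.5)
Your proof is correct, and its core matches the paper's: induction on the defining formula $P_{X\boxtimes Y}=P_X\otimes e_Y+\mathrm{id}_X\otimes P_Y$, using $e_{B\otimes C}=e_B\otimes e_C$. The difference is in how you get independence of parenthesization.

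The paper proves the formula for a single bracketing, the left-nested one obtained by tensoring $A_{n+1}$ on the right. It then transfers the formula to every other bracketing via Theorem~\ref{thm:monoidal}: the ordinary associators are Rota--Baxter isomorphisms, so by coherence the canonical identification of two bracketings intertwines their operators.

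You instead split at the outermost level as $X\boxtimes Y$ and use strong induction over all bracketings. This computes the operator of every bracketing directly and shows they all give the same sum. Your right-nested case is then redundant, since it is the case $k=1$. Your computation also reproves, for arbitrary bracketings, that the canonical module isomorphisms intertwine the operators. So the appeal to Theorem~\ref{thm:monoidal} in your last paragraph is not needed. Identifying the underlying modules with the flat tensor product is ordinary module-level coherence, and your calculation already supplies the compatibility of the operators.

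The paper's route is shorter because it reuses the monoidal structure it has already established. Yours is self-contained and turns parenthesization independence into an explicit calculation.
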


\begin{proof}
The case $n=1$ is immediate. Tensoring a product of $n$ factors with
$A_{n+1}$ replaces its operator by
$P_{A_1\boxtimes\cdots\boxtimes A_n}\otimes e_{A_{n+1}}
+\mathrm{id}_{A_1\otimes\cdots\otimes A_n}\otimes P_{n+1}$.
This proves the formula by induction. Theorem \ref{thm:monoidal}
gives independence of the parenthesization.
\end{proof}

\subsection{Flip conjugation and uniqueness}\label{subsec:flip-conjugation}

Let $\tau_{A,B}:A\otimes B\to B\otimes A$ denote the ordinary flip,
$a\otimes b\mapsto b\otimes a$. Transporting the operator
$P_{B\boxtimes A}$ across the flip gives
$$
P^{\mathrm{fc}}_{A,B}
:=\tau_{B,A}P_{B\boxtimes A}\tau_{A,B}
=P_A\otimes\mathrm{id}_B+e_A\otimes P_B.
$$
We call $P^{\mathrm{fc}}_{A,B}$ the \textbf{flip-conjugated operator} and write
$$
(A,P_A,\varepsilon_A)\boxtimes^{\mathrm{fc}}(B,P_B,\varepsilon_B)
:=(A\otimes B,P^{\mathrm{fc}}_{A,B},\varepsilon_A\otimes\varepsilon_B).
$$

\begin{prop}\label{prop:flip-conjugated}
The product $\boxtimes^{\mathrm{fc}}$ defines a monoidal structure on
$\mathbf{AugRBAlg}_\lambda$ with the ordinary associator and unit
constraints. The ordinary flips give a monoidal equivalence
$$
(\mathbf{AugRBAlg}_\lambda,\boxtimes^{\mathrm{fc}})
\simeq(\mathbf{AugRBAlg}_\lambda,\boxtimes)^{\mathrm{rev}},
$$
where the superscript $\mathrm{rev}$ denotes reversal of the monoidal product.
\end{prop}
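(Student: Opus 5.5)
The plan is to deduce everything from Theorems~\ref{thm:tensor-product} and \ref{thm:monoidal} by transport of structure along the flips. This avoids redoing the Rota-Baxter computation.

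\emph{Objects.} Since $\tau_{A,B}:A\otimes B\to B\otimes A$ is a unital algebra isomorphism with $(\varepsilon_B\otimes\varepsilon_A)\tau_{A,B}=\varepsilon_A\otimes\varepsilon_B$, conjugating the augmented Rota-Baxter structure $P_{B\boxtimes A}$ of Theorem~\ref{thm:tensor-product} by $\tau$ gives an augmented Rota-Baxter structure on $A\otimes B$. Its operator is $P^{\mathrm{fc}}_{A,B}$, by the displayed computation $\tau(P_B\otimes e_A+\id_B\otimes P_A)\tau=e_A\otimes P_B+P_A\otimes\id_B$. Hence $A\boxtimes^{\mathrm{fc}}B$ is an object. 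Moreover $\tau_{A,B}:A\boxtimes^{\mathrm{fc}}B\to B\boxtimes A$ is an isomorphism in $\mathbf{AugRBAlg}_\lambda$ by construction.

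\emph{Functoriality.} For morphisms $f,g$ we have $f\otimes g=\tau_{B',A'}(g\otimes f)\tau_{A,B}$. Here $g\otimes f$ is a morphism for $\boxtimes$ by Theorem~\ref{thm:monoidal}, so $f\otimes g$ is a morphism for $\boxtimes^{\mathrm{fc}}$. Identities and composition are preserved because the underlying maps are ordinary tensor products.

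\emph{Constraints.} I check directly that the ordinary associator intertwines the operators. Corollary~\ref{thm:iterated} conjugated by the flip suggests the formula
\[
P_A\otimes\id_B\otimes\id_C+e_A\otimes P_B\otimes\id_C+e_A\otimes e_B\otimes P_C .
\]
Computing $P^{\mathrm{fc}}$ on $(A\otimes B)\otimes C$ gives $(P_A\otimes\id_B+e_A\otimes P_B)\otimes\id_C+(e_A\otimes e_B)\otimes P_C$. On $A\otimes(B\otimes C)$ it gives $P_A\otimes\id_{B\otimes C}+e_A\otimes(P_B\otimes\id_C+e_B\otimes P_C)$. Both equal the displayed formula, using $e_{A\otimes B}=e_A\otimes e_B$. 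For the unit constraints, $P_{\mathbf 1}=0$ and $e_{\mathbf 1}=\id$ give
\[
P^{\mathrm{fc}}_{\mathbf 1,A}=\id\otimes P_A,\qquad P^{\mathrm{fc}}_{A,\mathbf 1}=P_A\otimes\id,
\]
so $\ell_A$ and $\rho_A$ intertwine the operators. They also preserve augmentations. As in Theorem~\ref{thm:monoidal}, the pentagon, triangle and naturality identities hold because the underlying maps are the ordinary constraints.

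\emph{Monoidal equivalence.} The reversed category $(\mathbf{AugRBAlg}_\lambda,\boxtimes)^{\mathrm{rev}}$ has product $A\boxtimes^{\mathrm{rev}}B=B\boxtimes A$. Its associator is the inverse of the ordinary associator with the arguments reversed, and its unitors are swapped. I take the identity functor with structure maps $J_{A,B}=\tau_{A,B}:A\boxtimes^{\mathrm{fc}}B\to B\boxtimes A$, which are isomorphisms by the first step, and the identity on $\mathbf 1$. Naturality of $J$ is naturality of the flip. The hexagon-type compatibility with the associators and the unit compatibilities are identities between maps of underlying $\mathbf{k}$-modules. They reduce to the familiar facts that the ordinary flip is compatible with the ordinary associators and unitors, i.e.\ the coherence of the symmetric monoidal category of $\mathbf{k}$-modules, applied to pure tensors $a\otimes b\otimes c$. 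Since the identity functor is an equivalence and $J$ is invertible, this gives a strong monoidal equivalence.

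The main obstacle is only bookkeeping: getting the conventions of the reversed associator right so that the coherence diagram for $J$ is literally the one that commutes in $\mathbf{k}$-modules. Everything substantive is carried by Theorems~\ref{thm:tensor-product} and \ref{thm:monoidal}.
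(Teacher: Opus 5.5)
Your proposal is correct and follows the paper's route: you transport the structure of Theorems~\ref{thm:tensor-product} and \ref{thm:monoidal} along the flips, which are isomorphisms $A\boxtimes^{\mathrm{fc}}B\to B\boxtimes A$ by construction. The paper only asserts that the transported associator and unit constraints are the ordinary ones. You instead check directly that the ordinary constraints intertwine the flip-conjugated operators, and you write out the coherence of the structure maps $J=\tau$ explicitly, which spells out what the paper leaves implicit.
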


\begin{proof}
The flip preserves multiplication and augmentation, and by definition
$\tau_{A,B}P^{\mathrm{fc}}_{A,B}=P_{B\boxtimes A}\tau_{A,B}$.
Transporting the monoidal structure of Theorem \ref{thm:monoidal}
through these isomorphisms gives $\boxtimes^{\mathrm{fc}}$.
The transported associator and unit constraints are the ordinary ones.
\end{proof}

We consider universal formulas with fixed scalar coefficients in the five
terms
$P_A\otimes\mathrm{id}_B$, $P_A\otimes e_B$,
$\mathrm{id}_A\otimes P_B$, $e_A\otimes P_B$ and
$P_A\otimes P_B$. The following theorem shows that only $\boxtimes$ and its
flip-conjugated counterpart occur in this class.

\begin{theorem}\label{thm:tensor-uniqueness}
Assume that $\mathbf{k}$ is a field of characteristic zero, and fix
$a,b,c,d,h\in\mathbf{k}$. For augmented Rota-Baxter algebras
$(A,P_A,\varepsilon_A)$ and $(B,P_B,\varepsilon_B)$ of weight $\lambda$, set
\begin{equation}
\mathscr P_{A,B}:=\mathscr P_{A,B}^{a,b,c,d,h}:=aP_A\otimes\mathrm{id}_B+bP_A\otimes e_B
 +c\mathrm{id}_A\otimes P_B
+d e_A\otimes P_B+hP_A\otimes P_B.
\end{equation}
Define the product on objects by
$$
(A,P_A,\varepsilon_A)\star(B,P_B,\varepsilon_B)
:=(A\otimes B,\mathscr P_{A,B},\varepsilon_A\otimes\varepsilon_B)
$$
and on morphisms by $f\star g=f\otimes g$. If $\star$, together with
the unit object $\mathbf{1}=(\mathbf{k},0,\mathrm{id}_{\mathbf{k}})$ and the
ordinary associator and unit constraints, defines a monoidal structure on
$\mathbf{AugRBAlg}_\lambda$, then exactly one of the following holds:
\begin{align*}
&(a,b,c,d,h)=(0,1,1,0,0),
&&\mathscr P_{A,B}=P_A\otimes e_B+\mathrm{id}_A\otimes P_B,\\
&(a,b,c,d,h)=(1,0,0,1,0),
&&\mathscr P_{A,B}=P_A\otimes\mathrm{id}_B+e_A\otimes P_B.
\end{align*}
Conversely, both formulas define monoidal structures. Thus, within this
five-term class, the only monoidal products are $\boxtimes$ and the
flip-conjugated product $\boxtimes^{\mathrm{fc}}$.
\end{theorem}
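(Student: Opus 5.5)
The plan is to extract polynomial constraints on $(a,b,c,d,h)$ from three requirements. First, the unit constraints $\ell_A,\rho_A$ must be morphisms. Second, the ordinary associator must be a morphism. Third, $\mathscr P_{A,B}$ must actually be a Rota--Baxter operator of weight $\lambda$ for every pair of objects, since $A\star B$ has to lie in $\mathbf{AugRBAlg}_\lambda$. Each constraint will be tested on a few well-chosen objects. The converse direction is already available: $(0,1,1,0,0)$ is Theorems~\ref{thm:tensor-product} and \ref{thm:monoidal}, and $(1,0,0,1,0)$ is Proposition~\ref{prop:flip-conjugated}. The two tuples are distinct, so ``exactly one'' is automatic.

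\textbf{Unit constraints.} On $\mathbf{1}\star A$ we have $P_{\mathbf 1}=0$ and $e_{\mathbf 1}=\mathrm{id}_{\mathbf k}$, so $\mathscr P_{\mathbf 1,A}(t\otimes a)=(c+d)\,t\otimes P_A(a)$. Thus $\ell_A$ intertwines the operators iff $(c+d-1)P_A=0$. Testing on any object with $P_A\neq0$ gives $c+d=1$, and symmetrically $\rho_A$ gives $a+b=1$. For the test object, take Example~\ref{ex:basic}(\ref{it:ex-can}) when $\lambda\neq0$, and the integration operator of Example~\ref{ex:basic}(\ref{it:ex-int}) when $\lambda=0$.

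\textbf{Associativity.} Expand both $\alpha\,\mathscr P_{A\star B,C}$ and $\mathscr P_{A,B\star C}\,\alpha$ as combinations of elementary tensors $f_1\otimes f_2\otimes f_3$ with $f_i\in\{\mathrm{id},e,P\}$. Use $e_{A\otimes B}=e_A\otimes e_B$, $eP=Pe\!=\!0$-type simplifications ($e_AP_A=0$ and $P_Ae_A=p_A\varepsilon_A$; the latter is handled by evaluating on augmentation-ideal elements, where $e$ kills). The coefficient of $P_A\otimes\mathrm{id}\otimes\mathrm{id}$ is $a^2$ on the left and $a$ on the right. The coefficient of $\mathrm{id}\otimes\mathrm{id}\otimes P_C$ is $c$ on the left and $c^2$ on the right. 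Hence $a,c\in\{0,1\}$ provided these elementary operators are linearly independent on some test triple. I would check this by evaluating on $x\otimes y\otimes z$ with $x,y,z$ in augmentation ideals and $P(x),P(y),P(z)$ linearly independent from $x,y,z$. In weight zero, $\mathbf k[x]$ with integration (char $0$) works. In weight $\lambda\neq0$, a scaled summation operator $f\mapsto\lambda\sum_{i<n}f(i)$ works, as does a direct sum of the canonical example with a nontrivial product of algebras.

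\textbf{Rota--Baxter identity.} It remains to show $h=0$ and to exclude $(a,c)=(1,1)$ and $(0,0)$. With $b=1-a$ and $d=1-c$ these two cases are $P_A\otimes\mathrm{id}+\mathrm{id}\otimes P_B$ and $P_A\otimes e_B+e_A\otimes P_B$. I will evaluate the Rota--Baxter identity for $\mathscr P_{A,A}$ at $(1\otimes1,1\otimes1)$, and at pairs like $(x\otimes1,1\otimes x)$, on a non-normalized object. For $\lambda=0$ this is $\mathbf k[x]$ with integration, exactly as in Example~\ref{ex:tensor-obstruction}. There, terms like $x^2\otimes x$ versus $x^2/2\otimes x$ produce nonzero integer multiples, which characteristic zero forces to vanish. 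For example, $h\neq0$ yields a stray $x\otimes x$-type term $h^2x^2\otimes x^2$ matched against $h\cdot\tfrac14 x^2\otimes x^2$-type terms, forcing $h=0$. The case $(1,1)$ fails because $(x\otimes1)(1\otimes x)$ appears twice on the left but is not reproduced. The case $(0,0)$ fails because the mixed term $x\otimes x$ is absent on the right.

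For general $\lambda$ I would use the weight-$\lambda$ summation algebra on functions $\mathbb N\to\mathbf k$, where $P(1)$ is $n\mapsto\lambda n$ and iterates are binomial-type polynomials. Nonzero values of the resulting scalar polynomials in $n$ again require characteristic zero. I expect the main obstacle to be this last step: choosing test objects of arbitrary fixed weight $\lambda$ on which the relevant tensors are independent, so that every unwanted coefficient is genuinely isolated. I would first try to handle all $\lambda$ uniformly with the scaled summation operator, and fall back to separate cases $\lambda=0$ and $\lambda\neq0$ if the bookkeeping degenerates.
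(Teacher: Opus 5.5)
Your plan is correct, and it reaches the conclusion by a different route from the paper's. The paper works only with normalized finite-dimensional test objects. Associativity on the square-zero object $E$ gives $a^2=a$ and $c^2=c$, and also $h(a-c)=0=h(b-d)$, so $h=0$ is automatic in the two good cases. The diagonal cases $a=c$ are then excluded by evaluating the Rota--Baxter identity at augmentation-ideal elements: on $E\otimes E$ for $\lambda=0$, and for $\lambda\neq0$ in two stages. There the dual numbers $D$ only pin down $h=\pm\lambda^{-1}$, and $F=\mathbf{k}^{\{0,1,2\}}$ is needed to kill these values.

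You instead evaluate at $1\otimes1$ on a non-normalized object. The step you flag as the main obstacle, which you left unexecuted, in fact closes immediately there. Write $\mathscr P=\mathscr P_{A,A}$ and $p=P_A(1_A)$. Using $a+b=c+d=1$, $e_A(p)=0$, and $P_A(p)=\tfrac12(p^2-\lambda p)$ from Proposition~\ref{prop:unit-identities}, a direct expansion gives
\begin{align*}
&\mathscr P(1\otimes1)^2-\mathscr P\bigl(2\mathscr P(1\otimes1)+\lambda\,1\otimes1\bigr)\\
&\quad=\tfrac{h^2}{2}\,p^2\otimes p^2
+\bigl(h(1-a)+\tfrac{\lambda h^2}{2}\bigr)\,p^2\otimes p
+\bigl(h(1-c)+\tfrac{\lambda h^2}{2}\bigr)\,p\otimes p^2\\
&\qquad+\bigl(2-2(a+c)+\lambda h(1+a+c)-\tfrac{\lambda^2h^2}{2}\bigr)\,p\otimes p
\end{align*}
in any augmented Rota--Baxter algebra, commutative or not. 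Suppose $1_A,p,p^2$ are linearly independent. This holds for integration on $\mathbf{k}[x]$ when $\lambda=0$ (where $p=x$), and for the scaled summation algebra when $\lambda\neq0$ (where $p=\lambda n$). Then the first coefficient forces $h=0$, and the last then forces $a+c=1$. Together with $a,c\in\{0,1\}$, this leaves exactly the two asserted formulas.

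So your approach disposes of $h$ and of both diagonal cases in a single computation, uniformly in $\lambda$. The paper's approach stays within normalized finite-dimensional objects of the kind reused for the braiding obstruction, at the price of a case split. Incidentally, $1,p,p^2$ are already independent in the paper's $F$, so one evaluation at $1\otimes1$ on $F$ would also suffice there.

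A few smaller corrections.
\begin{enumerate}
\item The associativity expansions involve only tensor products of $\mathrm{id},e,P$, never composites, so the remark about $P_Ae_A=p_A\varepsilon_A$ is unnecessary.
\item Your evaluation on augmentation-ideal elements also yields $ha=hc$, from the coefficient of $P\otimes\mathrm{id}\otimes P$.
\item The canonical object $-\lambda(\mathrm{id}-e)$ cannot serve for that independence test, since $P(x)=-\lambda x$ on its ideal. Take $\delta_1$ in the summation algebra instead: $P(\delta_1)(n)=\lambda$ for $n\geq2$ and $0$ otherwise.
\item Your heuristic accounts of the weight-zero mismatches are inaccurate. For $(a,c)=(1,1)$ the right side contains $4x\otimes x$ against $2x\otimes x$ on the left. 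The $h$-obstruction is $h^2x^2\otimes x^2$ against $\tfrac{h^2}{2}x^2\otimes x^2$, together with $h(1-a)\,x^2\otimes x$ and $h(1-c)\,x\otimes x^2$. The display above is the precise version.
\end{enumerate}
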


\begin{proof}
Since $\star$ is a monoidal product in $\mathbf{AugRBAlg}_\lambda$, each
$\mathscr P_{A,B}$ is a Rota-Baxter operator of weight $\lambda$ and the
ordinary associator and unit constraints are Rota-Baxter morphisms.
The right and left unit constraints give $a+b=1$ and $c+d=1$.
We first use associativity to restrict the coefficients, and then use the
Rota-Baxter identity to exclude the remaining formulas. For the associativity calculation, take
$E=\mathbf{k}1_E\oplus\mathbf{k}x\oplus\mathbf{k}y$ with
$(\mathbf{k}x\oplus\mathbf{k}y)^2=0$ and augmentation
$\varepsilon_E(t1_E+ux+vy)=t$.
Set $P_E(1_E)=0$. For $\lambda=0$, take
$P_E(x)=y$ and $P_E(y)=0$. For $\lambda\neq0$, take
$P_E(x)=-\lambda x$ and $P_E(y)=0$.
The restriction to the square-zero ideal satisfies the Rota-Baxter
identity and $P_E^2=-\lambda P_E$.
Theorem \ref{thm:unit-zero}(\ref{it:uz-b}) therefore applies.
The maps $\mathrm{id}_E,e_E,P_E$ are linearly independent, as
evaluation at $1_E,y,x$ shows. Since $E$ is finite dimensional, the natural map
$\operatorname{End}_{\mathbf{k}}(E)^{\otimes3}
\to\operatorname{End}_{\mathbf{k}}(E^{\otimes3})$ is an isomorphism.
Hence the $27$ triple tensor products of these maps are
linearly independent.

Under the ordinary associator, compare the operators on
$(E\star E)\star E$ and $E\star(E\star E)$. Coefficient comparison gives
\begin{align*}
a^2&=a,& b^2&=b,& ab&=0,\\
c^2&=c,& d^2&=d,& cd&=0,\\
h(a-c)&=0,& h(b-d)&=0.
\end{align*}
The first row follows from the coefficients of
$P_E\otimes\mathrm{id}_E\otimes\mathrm{id}_E$,
$P_E\otimes e_E\otimes e_E$ and
$P_E\otimes e_E\otimes\mathrm{id}_E$.
The second follows from those of
$\mathrm{id}_E\otimes\mathrm{id}_E\otimes P_E$,
$e_E\otimes e_E\otimes P_E$ and
$e_E\otimes\mathrm{id}_E\otimes P_E$.
The last row follows from those of
$P_E\otimes\mathrm{id}_E\otimes P_E$ and $P_E\otimes e_E\otimes P_E$.
Thus each of $(a,b)$ and $(c,d)$ equals $(1,0)$ or $(0,1)$.
When these pairs differ, $h=0$, giving precisely $\boxtimes$ and
$\boxtimes^{\mathrm{fc}}$.

It remains to exclude $(a,b)=(c,d)=(1,0)$ and
$(a,b)=(c,d)=(0,1)$.
For $\lambda=0$, evaluate the Rota--Baxter identity on $E\otimes E$
at $x\otimes1_E$ and $1_E\otimes x$.
For $P_E\otimes\mathrm{id}_E+\mathrm{id}_E\otimes P_E+hP_E\otimes P_E$,
its left and right sides are $y\otimes y$ and $2y\otimes y$.
For $P_E\otimes e_E+e_E\otimes P_E+hP_E\otimes P_E$,
they are $y\otimes y$ and $0$.
Neither identity holds.

For $\lambda\neq0$, take $D=\mathbf{k}[t]/(t^2)$ with its standard
augmentation and $P_D=-\lambda(\mathrm{id}_D-e_D)$.
Write $\bar t$ for the residue class of $t$.
At $\bar t\otimes1_D$ and $1_D\otimes\bar t$, the left side
of the Rota--Baxter identity is $\lambda^2\bar t\otimes\bar t$.
Its right side is
$(2\lambda^2-h\lambda^3)\bar t\otimes\bar t$ when $a=c=1$,
and $-h\lambda^3\bar t\otimes\bar t$ when $a=c=0$.
Thus $h=\lambda^{-1}$ and $h=-\lambda^{-1}$, respectively.
To exclude these values, take $F=\mathbf{k}^{\{0,1,2\}}$ with
pointwise multiplication,
$\varepsilon_F(f)=f(0)$ and
$P_F(f)(n)=\lambda\sum_{i=0}^{n-1}f(i)$.
Write $\delta_i$ for its $i$th coordinate function.
When $a=c=1$ and $h=\lambda^{-1}$, the left side minus the right
side of the Rota--Baxter identity at
$(\delta_0\otimes\delta_1,\delta_1\otimes\delta_0)$ is
$$
\lambda^2(\delta_1+\delta_2)\otimes(\delta_1+\delta_2).
$$
When $a=c=0$ and $h=-\lambda^{-1}$, the difference at
$(\delta_1\otimes\delta_1,\delta_1\otimes\delta_1)$ is
$2\lambda^2\delta_2\otimes\delta_2$.
Both are nonzero in characteristic zero. The converse follows from
Theorem \ref{thm:monoidal} and Proposition \ref{prop:flip-conjugated}.
\end{proof}

The two products differ in the position of the augmentation projection.
In $\boxtimes$, the term involving $P_A$ uses $e_B$, whereas in
$\boxtimes^{\mathrm{fc}}$ the term involving $P_B$ uses $e_A$.
All bialgebra constructions below use $\boxtimes$.

\subsection{The ordinary flip}\label{subsec:flip}

The flip in the preceding subsection was used to transport the operator; it need not itself be a morphism $A\boxtimes B\to B\boxtimes A$. We now determine when it is.

Let $\mathbf{AugAlg}$ denote the category of unital augmented
algebras and unital algebra morphisms preserving augmentations.
There are two direct ways to regard an augmented algebra as an
augmented Rota--Baxter algebra.

\begin{prop}\label{thm:canonical-embedding}
The assignments $(A,\varepsilon_A)\mapsto(A,0,\varepsilon_A)$ and
$(A,\varepsilon_A)\mapsto(A,-\lambda(\mathrm{id}_A-e_A),\varepsilon_A)$ define fully
faithful strong monoidal functors
$\mathbf{AugAlg}\to\mathbf{AugRBAlg}_\lambda$.
Their images, denoted by $\mathcal Z_\lambda$ and
$\mathcal C_\lambda$, are symmetric monoidal subcategories with
the ordinary flip.
\end{prop}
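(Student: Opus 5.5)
The proof splits into four checks: each assignment is a functor into $\mathbf{AugRBAlg}_\lambda$, each functor is fully faithful, each is strict monoidal, and the ordinary flip is a morphism between the relevant tensor products. Throughout I write $Z(A)=(A,0,\varepsilon_A)$ and $C(A)=(A,-\lambda(\mathrm{id}_A-e_A),\varepsilon_A)$.

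\emph{Objects.} By Example~\ref{ex:basic}(\ref{it:ex-zero}) and (\ref{it:ex-can}), both $Z(A)$ and $C(A)$ are augmented Rota--Baxter algebras of weight $\lambda$.

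\emph{Morphisms and full faithfulness.} Let $f:A\to A'$ be a morphism of augmented algebras. It commutes with the zero operators trivially. It also commutes with $\mathrm{id}-e$, because $\varepsilon_{A'}f=\varepsilon_A$ gives $fe_A=e_{A'}f$. So the same underlying map $f$ is a morphism in $\mathbf{AugRBAlg}_\lambda$. Conversely, every morphism between two objects in either image is in particular an augmented algebra morphism. Hence both functors are bijective on hom-sets and injective on objects. The images $\mathcal Z_\lambda$ and $\mathcal C_\lambda$ are therefore full subcategories isomorphic to $\mathbf{AugAlg}$.

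\emph{Monoidal structure.} I will show the functors are strict monoidal for the ordinary tensor product of augmented algebras, with unit $(\mathbf{k},\mathrm{id}_{\mathbf{k}})$.
\begin{itemize}
\item For $Z$: the operator $0\otimes e_B+\mathrm{id}_A\otimes 0$ is $0$.
\item For $C$: compute
\[
-\lambda(\mathrm{id}_A-e_A)\otimes e_B-\lambda\,\mathrm{id}_A\otimes(\mathrm{id}_B-e_B)
=-\lambda\bigl(\mathrm{id}_A\otimes\mathrm{id}_B-e_A\otimes e_B\bigr).
\]
Since $e_{A\otimes B}=e_A\otimes e_B$, this equals $-\lambda(\mathrm{id}_{A\otimes B}-e_{A\otimes B})$, as required.
\item Units: $C(\mathbf{k})$ has operator $-\lambda(\mathrm{id}-\mathrm{id})=0$, so both functors send the unit to $\mathbf 1$.
\end{itemize}
The associators and unitors on both sides are the ordinary ones, so the identity comparison maps satisfy the coherence conditions. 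Thus both functors are strict, hence strong, monoidal.

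\emph{Symmetry.} It remains to show that, for objects $A,B$ in the same image, the flip $\tau_{A,B}$ intertwines $P_{A\boxtimes B}$ with $P_{B\boxtimes A}$. In $\mathcal Z_\lambda$ both operators vanish. In $\mathcal C_\lambda$, the formula above shows
\[
P_{A\boxtimes B}=-\lambda\bigl(\mathrm{id}\otimes\mathrm{id}-e_A\otimes e_B\bigr),
\]
which is symmetric in the two factors, so it is conjugated by $\tau$ into the corresponding operator on $B\otimes A$. The flip is also an algebra isomorphism that preserves augmentations. The hexagon identities and $\tau^2=\mathrm{id}$ hold because they hold for the underlying flips. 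Hence each image is a symmetric monoidal subcategory.

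The only step that is more than routine is the collapse of the $C$-operator to the symmetric form $-\lambda(\mathrm{id}-e_A\otimes e_B)$, and it is a direct expansion.
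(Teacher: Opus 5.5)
Your proposal is correct and follows the paper's proof essentially step for step. It uses the same ingredients: full faithfulness from $fe_A=e_{A'}f$, monoidality via the collapse to $-\lambda(\mathrm{id}_{A\otimes B}-e_A\otimes e_B)$, and the symmetry from the flip-invariance of that operator and of the zero operator. You add the explicit check $C(\mathbf{k})=\mathbf 1$ and the observation that both functors are in fact strict, which the paper leaves implicit.
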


\begin{proof}
The operators were described in Example \ref{ex:basic}
(\ref{it:ex-zero}) and (\ref{it:ex-can}).
Every augmented algebra morphism $f:A\to B$ satisfies $fe_A=e_Bf$,
so both assignments are fully faithful.
For zero operators, preservation of tensor products is immediate.
For the second assignment, we have
\begin{align*}
-\lambda(\mathrm{id}_A-e_A)\otimes e_B-\lambda\mathrm{id}_A\otimes(\mathrm{id}_B-e_B)
&=-\lambda\bigl((\mathrm{id}_A-e_A)\otimes e_B
       +\mathrm{id}_A\otimes(\mathrm{id}_B-e_B)\bigr)\\
&=-\lambda(\mathrm{id}_{A\otimes B}-e_A\otimes e_B).
\end{align*}
Both functors preserve the unit object and the ordinary constraints.
The flips intertwine the corresponding operators and satisfy
the symmetric coherence identities.
At weight zero, the two functors coincide.
\end{proof}

\begin{prop}\label{prop:flip}
Let $(A,P_A,\varepsilon_A)$ and $(B,P_B,\varepsilon_B)$ be objects of the
category $\mathbf{AugRBAlg}_\lambda$.
The ordinary flip $\tau_{A,B}$ is a morphism
$A\boxtimes B\to B\boxtimes A$ in
$\mathbf{AugRBAlg}_\lambda$ if and only if
$$
P_B\otimes(\mathrm{id}_A-e_A)=(\mathrm{id}_B-e_B)\otimes P_A
$$
as maps $B\otimes A\to B\otimes A$.
\end{prop}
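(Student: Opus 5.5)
The flip $\tau_{A,B}$ is already a unital algebra isomorphism $A\otimes B\to B\otimes A$. It also preserves augmentations, since $(\varepsilon_B\otimes\varepsilon_A)\tau_{A,B}=\varepsilon_A\otimes\varepsilon_B$ under the identification $\mathbf{k}\otimes\mathbf{k}=\mathbf{k}$. So the only condition to analyze is compatibility with the operators:
$$
\tau_{A,B}P_{A\boxtimes B}=P_{B\boxtimes A}\tau_{A,B}.
$$
Since $\tau_{A,B}$ is invertible with inverse $\tau_{B,A}$, this is equivalent to
$$
\tau_{A,B}P_{A\boxtimes B}\tau_{B,A}=P_{B\boxtimes A}
$$
as maps $B\otimes A\to B\otimes A$.

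Next I compute both sides explicitly. Conjugating by the flip swaps tensor factors, so
$$
\tau_{A,B}(P_A\otimes e_B+\mathrm{id}_A\otimes P_B)\tau_{B,A}=e_B\otimes P_A+P_B\otimes\mathrm{id}_A .
$$
This is just the flip-conjugated operator $P^{\mathrm{fc}}_{B,A}$ from Subsection~\ref{subsec:flip-conjugation}. By Theorem~\ref{thm:tensor-product},
$$
P_{B\boxtimes A}=P_B\otimes e_A+\mathrm{id}_B\otimes P_A .
$$
To be safe, I would check both formulas on elementary tensors $b\otimes a$.

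The condition therefore reads
$$
e_B\otimes P_A+P_B\otimes\mathrm{id}_A=P_B\otimes e_A+\mathrm{id}_B\otimes P_A .
$$
Moving terms to the two sides and using bilinearity of $\otimes$ on linear maps, this becomes
$$
P_B\otimes(\mathrm{id}_A-e_A)=(\mathrm{id}_B-e_B)\otimes P_A .
$$
That is exactly the stated identity. Every step is an equivalence, so both directions follow at once.

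No step is a real obstacle; the argument is a direct computation. The one point needing care is getting the flip conjugation right, namely that it produces $P^{\mathrm{fc}}_{B,A}$ and not $P^{\mathrm{fc}}_{A,B}$. I would verify this on elementary tensors: applying $\tau_{B,A}$, then $P_{A\boxtimes B}$, then $\tau_{A,B}$ to $b\otimes a$ gives $\varepsilon_B(b)1_B\otimes P_A(a)+P_B(b)\otimes a$.
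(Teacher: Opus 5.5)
Your proposal is correct and follows essentially the same route as the paper: after noting that the flip is an augmented algebra isomorphism, you compare $\tau_{A,B}P_{A\boxtimes B}$ with $P_{B\boxtimes A}\tau_{A,B}$ on elementary tensors and rearrange the four terms into the stated identity. Writing this as the conjugated equality $\tau_{A,B}P_{A\boxtimes B}\tau_{B,A}=P_{B\boxtimes A}$ on $B\otimes A$, and recognizing the left side as $P^{\mathrm{fc}}_{B,A}$, is only a cosmetic difference.
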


\begin{proof}
The flip is an algebra isomorphism and preserves the augmentation. For
$a\in A$ and $b\in B$, the two sides of the condition that the flip
commute with the operators are
\begin{align*}
\tau_{A,B}P_{A\boxtimes B}(a\otimes b)
&=
e_B(b)\otimes P_A(a)+P_B(b)\otimes a,\\
P_{B\boxtimes A}\tau_{A,B}(a\otimes b)
&=
P_B(b)\otimes e_A(a)+b\otimes P_A(a).
\end{align*}
These expressions are equal for all $a,b$ if and only if
$$
P_B(b)\otimes(a-e_A(a))
=
(b-e_B(b))\otimes P_A(a),
$$
which is the stated condition.
\end{proof}

\begin{theorem}\label{thm:maximal-symmetric}
Assume that $\mathbf{k}$ is a field. Every full monoidal
subcategory of $\mathbf{AugRBAlg}_\lambda$ on which the ordinary
flips form a symmetry is contained in $\mathcal Z_\lambda$ or
$\mathcal C_\lambda$. If $\lambda\neq0$, these are the two maximal
full monoidal subcategories with this symmetry. If $\lambda=0$,
they coincide.
\end{theorem}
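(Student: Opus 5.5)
The plan is to reduce everything to the flip criterion of Proposition~\ref{prop:flip}. Let $\mathcal S$ be a full monoidal subcategory of $\mathbf{AugRBAlg}_\lambda$ on which the ordinary flips form a symmetry. Then for all objects $A,B$ of $\mathcal S$ the flip $\tau_{A,B}$ is a morphism $A\boxtimes B\to B\boxtimes A$. Hence
$$
P_B\otimes(\mathrm{id}_A-e_A)=(\mathrm{id}_B-e_B)\otimes P_A
$$
for every pair, including the diagonal pairs $B=A$. The proof will first extract from the diagonal case that each object of $\mathcal S$ lies in $\mathcal Z_\lambda$ or in $\mathcal C_\lambda$. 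It will then use the off-diagonal case to show that all objects of $\mathcal S$ lie in the same one of the two.

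\emph{Diagonal step.} Take $B=A$ and put $g=\mathrm{id}_A-e_A$ and $f=P_A$, so that $f\otimes g=g\otimes f$ in $\operatorname{End}(A\otimes A)$. If $I_A=0$, then $A=\mathbf{k}$, $P_A=0$, and $A$ lies in both subcategories. Otherwise $g\neq0$. Over a field, the equality $f\otimes g=g\otimes f$ with $g\ne0$ forces $f=\mu g$ for a scalar $\mu$. To see this, choose a vector $v$ and a functional $\phi$ with $\phi(g(v))=1$, and apply $\mathrm{id}\otimes\phi$ to both sides evaluated on $w\otimes v$. This gives $f(w)=\phi(f(v))\,g(w)$ for all $w$.

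Thus $P_A=\mu_A(\mathrm{id}_A-e_A)$, and in particular $P_A(1_A)=0$, so $A$ is normalized. Theorem~\ref{thm:unit-zero}(\ref{it:uz-a}) then gives $P_A^2=-\lambda P_A$. Restricted to $I_A\neq0$, where $P_A$ acts as $\mu_A$ times the identity, this reads $\mu_A^2=-\lambda\mu_A$. Hence $\mu_A\in\{0,-\lambda\}$, which means $A\in\mathcal Z_\lambda$ or $A\in\mathcal C_\lambda$. The step I expect to need the most care is this linear-algebra lemma over a field, including the degenerate case $I_A=0$; the rest is bookkeeping.

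\emph{Off-diagonal step.} Assume $\lambda\neq0$, and suppose $A\in\mathcal Z_\lambda$ and $B\in\mathcal C_\lambda$ both have nonzero augmentation ideals. The criterion then becomes
$$
-\lambda(\mathrm{id}_B-e_B)\otimes(\mathrm{id}_A-e_A)=0.
$$
This is impossible, since the tensor product of two nonzero maps over a field is nonzero. Hence all objects of $\mathcal S$ with nonzero ideal lie in the same class, and $\mathbf{k}$ lies in both. Because $\mathcal S$ is full, $\mathcal S$ is contained in $\mathcal Z_\lambda$ or in $\mathcal C_\lambda$.

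Conversely, Proposition~\ref{thm:canonical-embedding} shows that $\mathcal Z_\lambda$ and $\mathcal C_\lambda$ are themselves full monoidal subcategories that are symmetric with the ordinary flip. For $\lambda\neq0$, neither is contained in the other: $\mathbf{k}[t]/(t^2)$ with the zero operator and with $-\lambda(\mathrm{id}-e)$ are distinct objects. So $\mathcal Z_\lambda$ and $\mathcal C_\lambda$ are the two maximal such subcategories. For $\lambda=0$ the two operators coincide, so $\mathcal Z_0=\mathcal C_0$.
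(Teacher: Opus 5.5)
Your proof is correct and follows the paper's argument. The diagonal flip condition forces $P_A=\mu_A(\mathrm{id}_A-e_A)$ by the same functional trick, $\mu_A(\mu_A+\lambda)=0$ places $A$ in $\mathcal Z_\lambda$ or $\mathcal C_\lambda$, and when $\lambda\neq0$ the off-diagonal condition $-\lambda(\mathrm{id}_B-e_B)\otimes(\mathrm{id}_A-e_A)=0$ rules out mixing the two. The only differences are cosmetic: you obtain $\mu_A(\mu_A+\lambda)=0$ from $P_A^2=-\lambda P_A$ (Theorem~\ref{thm:unit-zero}) rather than from the Rota--Baxter identity at $(1_A,b)$, and the final containment really uses fullness of $\mathcal Z_\lambda$ and $\mathcal C_\lambda$, not of $\mathcal S$.
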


\begin{proof}
Proposition \ref{thm:canonical-embedding} gives the stated symmetries
on $\mathcal Z_\lambda$ and $\mathcal C_\lambda$.
Let $A$ belong to a full monoidal subcategory $\mathcal D$ with this
symmetry. Proposition \ref{prop:flip} applied to $(A,A)$ gives
$$
P_A\otimes(\mathrm{id}_A-e_A)
=(\mathrm{id}_A-e_A)\otimes P_A.
$$
If $I_A=0$, then $A=\mathbf{k}1_A$ and $P_A=0$, so $A$ belongs
to both subcategories. Otherwise choose $0\neq b\in I_A$ and a
linear functional $\ell:A\to\mathbf{k}$ with $\ell(b)=1$.
Evaluation at $a\otimes b$, followed by $\mathrm{id}_A\otimes\ell$,
gives
$$
P_A(a)=\ell(P_A(b))\bigl(a-\varepsilon_A(a)1_A\bigr),
\qquad a\in A.
$$
In particular, $P_A(1_A)=0$. The Rota--Baxter identity at $(1_A,b)$
therefore yields
$$
0=P_A\bigl(P_A(b)+\lambda b\bigr)
=\ell(P_A(b))\bigl(\ell(P_A(b))+\lambda\bigr)b.
$$
Since $\mathbf{k}$ is a field and $b\neq0$, we have
$\ell(P_A(b))=0$ or $\ell(P_A(b))=-\lambda$.
Thus $A$ belongs to $\mathcal Z_\lambda$ or $\mathcal C_\lambda$.

If $\lambda\neq0$, an object of $\mathcal Z_\lambda$ and an object
of $\mathcal C_\lambda$ with nonzero augmentation ideals cannot
both belong to $\mathcal D$. Indeed, the flip condition would give
$$
-\lambda(\mathrm{id}_B-e_B)\otimes(\mathrm{id}_A-e_A)=0,
$$
contradicting its value on a tensor of nonzero elements of
$I_B$ and $I_A$. Hence $\mathcal D$ is contained in one of the two
subcategories. Each contains objects with a nonzero augmentation
ideal, so neither contains the other when $\lambda\neq0$.
This proves maximality. At weight zero the two subcategories coincide.
\end{proof}

\subsection{The obstruction to braidings}\label{subsec:braid}

To exclude every braiding, we need an obstruction to the existence
of any isomorphism between the two orders of a tensor product.
For finite dimensional normalized objects, the rank of the
operator $P_{A\boxtimes B}$ provides such an obstruction.

\begin{prop}\label{prop:affine-rank}
Let $\mathbf{k}$ be a field, and let $A,B$ be finite dimensional
normalized augmented Rota--Baxter algebras.
Put $d_A=\dim_{\mathbf{k}}A$ and $r_A=\operatorname{rank}P_A$,
and define $d_B,r_B$ similarly. Then
$$
d_{A\boxtimes B}=d_Ad_B,\qquad
r_{A\boxtimes B}=r_A+d_Ar_B.
$$
Equivalently, the maps $\chi_A(z)=d_Az+r_A$ on $\mathbb Q$
satisfy $\chi_{A\boxtimes B}=\chi_A\circ\chi_B$.
Thus $A\mapsto\chi_A$ gives a monoid homomorphism from
isomorphism classes under $\boxtimes$ to the maps
$z\mapsto dz+r$, with integers $d\geq1$ and $r\geq0$,
under composition.
\end{prop}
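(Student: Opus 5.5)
The dimension formula is immediate from $\dim(A\otimes B)=d_Ad_B$. The substance is the rank formula. I would compute the image of $P_{A\boxtimes B}=P_A\otimes e_B+\mathrm{id}_A\otimes P_B$ directly, using the decomposition $B=\mathbf{k}1_B\oplus I_B$. Normalization gives $P_B(1_B)=0$, and Proposition~\ref{prop:unit-identities}(\ref{it:a}) gives $P_B(B)\subseteq I_B$. Writing $A\otimes B=(A\otimes 1_B)\oplus(A\otimes I_B)$, the operator acts by
$$
P_{A\boxtimes B}(a\otimes 1_B)=P_A(a)\otimes 1_B,\qquad
P_{A\boxtimes B}(a\otimes y)=a\otimes P_B(y)\quad(y\in I_B),
$$
because $e_B(1_B)=1_B$, $P_B(1_B)=0$ and $e_B(y)=0$. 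So $P_{A\boxtimes B}$ is block diagonal with respect to this decomposition. It maps the first summand into $A\otimes1_B$ with image $P_A(A)\otimes 1_B$, and the second summand into $A\otimes I_B$ with image $A\otimes P_B(I_B)$.

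Since the two target summands intersect trivially, the rank is the sum of the ranks of the two blocks. The first block has rank $r_A$. The second has rank $d_A\cdot\dim P_B(I_B)$, because over a field $\dim(A\otimes W)=d_A\dim W$. Finally $P_B(I_B)=P_B(B)$, since $P_B$ kills $1_B$; hence $\dim P_B(I_B)=r_B$. This gives $r_{A\boxtimes B}=r_A+d_Ar_B$.

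The reformulation is then a one-line check:
$$
\chi_A(\chi_B(z))=d_A(d_Bz+r_B)+r_A=d_Ad_Bz+(r_A+d_Ar_B)=\chi_{A\boxtimes B}(z).
$$
For the monoid statement, I would note three things.
\begin{enumerate}
\item Isomorphic objects have equal dimension and operator rank, because an isomorphism conjugates the operators. So $\chi$ is well defined on isomorphism classes.
\item By Theorem~\ref{thm:monoidal}, normalized finite-dimensional objects are closed under $\boxtimes$, as remarked after that theorem, and $\boxtimes$ is associative up to isomorphism.
\item The unit $\mathbf{1}$ has $d=1$ and $r=0$, so $\chi_{\mathbf 1}=\mathrm{id}$.
\end{enumerate}
Also $d\ge1$ because $A\ne0$ (it is unital with an augmentation), and $r\ge0$ trivially.

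No step is a real obstacle. The only point needing care is using normalization to make the operator block diagonal, since otherwise $p_A\otimes 1_B$-type cross terms would spoil the block structure. I would state explicitly why the images lie in complementary summands.
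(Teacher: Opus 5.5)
Your proposal is correct and follows the paper's proof. The paper likewise splits $A\otimes B=(A\otimes\mathbf{k}1_B)\oplus(A\otimes I_B)$, notes that $P_{A\boxtimes B}$ acts as $P_A\otimes\mathrm{id}$ and $\mathrm{id}_A\otimes P_B$ on the two summands with images in complementary summands, uses $P_B(I_B)=P_B(B)$, and counts dimensions. One small slip in your closing remark: only $P_B(1_B)=0$ is used. Without it, the term that breaks the block structure is $a\otimes p_B$ in $P_{A\boxtimes B}(a\otimes1_B)$, not $p_A\otimes1_B$, which already lies in $A\otimes\mathbf{k}1_B$.
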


\begin{proof}
Under $B=\mathbf{k}1_B\oplus I_B$, the operator $P_{A\boxtimes B}$
restricts to $P_A\otimes\mathrm{id}$ on $A\otimes\mathbf{k}1_B$
and to $\mathrm{id}_A\otimes P_B$ on $A\otimes I_B$.
Since $P_B(1_B)=0$, we have $P_B(I_B)=P_B(B)$.
The two images lie in distinct direct summands of $A\otimes B$.
Therefore
$$
\operatorname{Im}P_{A\boxtimes B}
=\bigl(\operatorname{Im}P_A\otimes\mathbf{k}1_B\bigr)
\oplus\bigl(A\otimes\operatorname{Im}P_B\bigr).
$$
Taking dimensions gives the rank identity, and the dimension
identity follows from the ordinary tensor product.
Finally, $\chi_A(\chi_B(z))=d_Ad_Bz+d_Ar_B+r_A$.
\end{proof}

\begin{coro}\label{coro:rank-slope}
Under the hypotheses of Proposition \ref{prop:affine-rank}, an
isomorphism $A\boxtimes B\cong B\boxtimes A$ implies
$$
(d_A-1)r_B=(d_B-1)r_A.
$$
Consequently, in a braided monoidal full subcategory of finite
dimensional normalized objects, the rational number
$r_A/(d_A-1)$ is constant on objects of dimension greater than
one. Conversely, objects with a fixed value of this ratio
are closed under $\boxtimes$ and, together with the unit
object, form a full monoidal subcategory.
\end{coro}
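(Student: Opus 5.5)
The plan is to derive everything from the affine invariant $\chi$ of Proposition~\ref{prop:affine-rank}. Dimension and rank are preserved by isomorphisms in $\mathbf{AugRBAlg}_\lambda$. Indeed, an isomorphism $\phi$ is a linear bijection satisfying $\phi P=P'\phi$, so $\operatorname{rank}P=\operatorname{rank}P'$. An isomorphism $A\boxtimes B\cong B\boxtimes A$ therefore gives $\chi_{A\boxtimes B}=\chi_{B\boxtimes A}$. One point needs care: Proposition~\ref{prop:affine-rank} requires both factors to be normalized, and it computes the rank of $A\boxtimes B$. Both orders are covered because $A$ and $B$ are both normalized, and $A\boxtimes B$ is again normalized by the remark after Theorem~\ref{thm:monoidal}. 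Equating constant terms gives $r_A+d_Ar_B=r_B+d_Br_A$, which rearranges to $(d_A-1)r_B=(d_B-1)r_A$. Equivalently, $\chi_A$ and $\chi_B$ commute as affine maps.

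For the braided subcategory statement, the braiding supplies isomorphisms $c_{A,B}:A\boxtimes B\to B\boxtimes A$ for all objects of the subcategory. Since these objects are finite dimensional and normalized, the identity above holds for every pair. When $d_A,d_B>1$, dividing by $(d_A-1)(d_B-1)$ shows that $r_A/(d_A-1)=r_B/(d_B-1)$. Hence the ratio is constant on objects of dimension greater than one. Note that the hypothesis of a full subcategory is not needed for this direction; only the existence of the isomorphisms matters.

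For the converse, fix $q\in\mathbb Q$ and consider the finite dimensional normalized objects with $r=q(d-1)$, together with the unit object. The unit has $d=1$ and $r=0$, so it trivially satisfies $r=q(d-1)$. The ratio condition is then equivalent to saying that the affine map $\chi(z)=dz+r$ fixes the point $z_0=-q$, since $dz_0+r=-qd+q(d-1)=-q$. The class of maps fixing $-q$ is closed under composition. By Proposition~\ref{prop:affine-rank}, $\chi_{A\boxtimes B}=\chi_A\circ\chi_B$, so the class of objects is closed under $\boxtimes$. We also need that $A\boxtimes B$ is finite dimensional and normalized. Finite dimensionality follows from $d_{A\boxtimes B}=d_Ad_B$. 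Normalization follows from $P_{A\boxtimes B}(1\otimes1)=p_A\otimes1+1\otimes p_B=0$. The $d_{A\boxtimes B}>1$ case lands back in the fixed-ratio set. The case $d_{A\boxtimes B}=1$ occurs only when both factors are one dimensional, and then the product is isomorphic to the unit. Finally, the full subcategory on these objects contains the unit and is closed under $\boxtimes$. It therefore inherits the associator and unit constraints and is a full monoidal subcategory.

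I expect no serious obstacle. The only delicate points are the bookkeeping for dimension one and checking that the rank identity applies in both orders. The fixed-point reformulation is the cleanest route to closure, and it avoids dividing by $d-1$ when a factor has dimension one.
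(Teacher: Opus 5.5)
Your proposal is correct and follows the paper's proof. Equal ranks under the isomorphism, combined with the rank formula of Proposition~\ref{prop:affine-rank}, give $(d_A-1)r_B=(d_B-1)r_A$. Your observation that $\chi_A$ fixes $-q$ is a repackaging of the paper's direct computation $r_{A\boxtimes B}=s(d_A-1)+d_As(d_B-1)=s(d_Ad_B-1)$, and your extra checks on one-dimensional objects and normalization of $A\boxtimes B$ correctly fill in details the paper leaves implicit.
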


\begin{proof}
An isomorphism intertwines the operators, so their ranks are
equal. Proposition \ref{prop:affine-rank} gives
$r_A+d_Ar_B=r_B+d_Br_A$, which is the stated identity.
A braiding supplies such an isomorphism for every pair.
Conversely, if $r_A=s(d_A-1)$ and $r_B=s(d_B-1)$ for
$s\in\mathbb Q$, then
$$
r_{A\boxtimes B}
=s(d_A-1)+d_As(d_B-1)=s(d_Ad_B-1).
$$
This proves the asserted closure.
\end{proof}

\begin{theorem}\label{thm:not-braided}
Let $\mathbf{k}$ be a field. For every $\lambda\in\mathbf{k}$, the
monoidal category
$$
(\mathbf{AugRBAlg}_\lambda,\boxtimes,\mathbf 1)
$$
admits no braiding. In particular, the obstruction is not confined to the
ordinary flip.
\end{theorem}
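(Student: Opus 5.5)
A braiding provides, for every pair of objects $A,B$, an isomorphism $c_{A,B}:A\boxtimes B\to B\boxtimes A$ in $\mathbf{AugRBAlg}_\lambda$. Such an isomorphism preserves the dimension of the underlying space and the rank of the operator. So it suffices to exhibit two finite dimensional normalized objects $A,B$ that violate the relation $(d_A-1)r_B=(d_B-1)r_A$ of Corollary~\ref{coro:rank-slope}. By Proposition~\ref{prop:affine-rank}, that relation is exactly what an isomorphism $A\boxtimes B\cong B\boxtimes A$ forces. This argument never uses the form of $c_{A,B}$, so it rules out every braiding and not only the ordinary flip.

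I will choose the witnesses separately for $\lambda\neq0$ and $\lambda=0$, working over an arbitrary field $\mathbf{k}$.

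\textbf{Case $\lambda\neq0$.} Take $A=(\mathbf{k}[t]/(t^2),0,\varepsilon)$ from Example~\ref{ex:basic}(\ref{it:ex-zero}), so $d_A=2$ and $r_A=0$. Take $B=(\mathbf{k}[t]/(t^2),-\lambda(\mathrm{id}-e),\varepsilon)$ from Example~\ref{ex:basic}(\ref{it:ex-can}), so $d_B=2$ and $r_B=1$. Both are normalized. The relation would require $1\cdot1=1\cdot0$, which is false. As a direct check, $r_{A\boxtimes B}=0+2\cdot1=2$ while $r_{B\boxtimes A}=1+2\cdot0=1$.

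\textbf{Case $\lambda=0$.} Now normalization forces $P^2=0$ on $I$, so I need an object with nonzero rank. Use the algebra $E$ from the proof of Theorem~\ref{thm:tensor-uniqueness}: $E=\mathbf{k}1\oplus\mathbf{k}x\oplus\mathbf{k}y$ with square-zero ideal, $P(x)=y$, $P(y)=0$, $P(1)=0$. By Theorem~\ref{thm:unit-zero}(\ref{it:uz-b}) it is a normalized augmented Rota--Baxter algebra with $d_E=3$ and $r_E=1$. Pair it with $A=(\mathbf{k}[t]/(t^2),0,\varepsilon)$, which has $d_A=2$ and $r_A=0$. The relation becomes $(2-1)\cdot1=(3-1)\cdot0$, which is false.

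Any dimensions would work so long as the ratios $r/(d-1)$ differ. The uniform choice $A$ zero-operator and $B=E$ in fact works for every $\lambda$, provided $E$ is replaced by $\mathbf{k}[t]/(t^2)$ with $-\lambda(\mathrm{id}-e)$ when $\lambda\neq0$. The one point I expect to need care is confirming that each witness is genuinely normalized and has the claimed rank, so that Proposition~\ref{prop:affine-rank} applies. Since $\mathbf{AugRBAlg}_\lambda$ contains these objects, no braiding exists, and in particular the obstruction is not confined to the flip.
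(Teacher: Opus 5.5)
Your proposal is correct and is essentially the paper's proof: the same rank obstruction from Proposition~\ref{prop:affine-rank}, with the same witnesses, namely $(\mathbf{k}[t]/(t^2),0,\varepsilon)$ paired with $(\mathbf{k}[t]/(t^2),-\lambda(\mathrm{id}-e),\varepsilon)$ when $\lambda\neq0$, and the three-dimensional object $E$ with $P(x)=y$ paired with the zero-operator object when $\lambda=0$. Your rank computations ($2$ versus $1$ in both cases) match the paper's, and your remark that no property of $c_{A,B}$ beyond being an isomorphism is used is exactly why the result excludes all braidings and not only the ordinary flip.
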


\begin{proof}
Let $D=\mathbf{k}[t]/(t^2)$ with its standard augmentation
$\varepsilon_D$, and let $Z=(D,0,\varepsilon_D)$.
Suppose first that $\lambda\neq0$ and take
$C=(D,-\lambda(\mathrm{id}_D-e_D),\varepsilon_D)$.
Both objects are normalized, with
$d_Z=d_C=2$, $r_Z=0$ and $r_C=1$.
Proposition \ref{prop:affine-rank} gives
$$
\operatorname{rank}P_{Z\boxtimes C}=2,
\qquad
\operatorname{rank}P_{C\boxtimes Z}=1.
$$
Hence $Z\boxtimes C$ and $C\boxtimes Z$ are not isomorphic as augmented
Rota--Baxter algebras.

Now let $\lambda=0$. Take
$$
E=\mathbf{k}1_E\oplus\mathbf{k}x\oplus\mathbf{k}y,
\qquad
(\mathbf{k}x\oplus\mathbf{k}y)^2=0,
$$
with augmentation $\varepsilon_E(c1_E+ax+by)=c$, and define
$P_E(1_E)=0$, $P_E(x)=y$ and $P_E(y)=0$.
Since $P_E^2=0$, Theorem \ref{thm:unit-zero}(\ref{it:uz-b})
shows that this is a normalized augmented Rota--Baxter algebra
of weight zero. We have $d_E=3$, $r_E=1$, $d_Z=2$ and $r_Z=0$.
Therefore
$$
\operatorname{rank}P_{E\boxtimes Z}=1,
\qquad
\operatorname{rank}P_{Z\boxtimes E}=2,
$$
so these two objects are again nonisomorphic.

If a braiding existed, its component $c_{A,B}$ would be an isomorphism
$A\boxtimes B\to B\boxtimes A$ for every pair $A,B$. Either of the
preceding pairs gives a contradiction.
\end{proof}

For normalized objects of dimension greater than one, the ratio
$r_A/(d_A-1)$ is zero on $\mathcal Z_\lambda$ and, when
$\lambda\neq0$, is one on $\mathcal C_\lambda$.
Corollary \ref{coro:rank-slope} gives a necessary condition for a
subcategory of finite dimensional normalized objects to admit a
braiding. It does not decide whether a subcategory with a fixed
intermediate ratio admits a braiding.

\section{Comonoid objects and the free Rota--Baxter bimonad}\label{sec:monoidal}

The monoidal product $\boxtimes$ makes the Hochschild cocycle identity a
condition on a coproduct in $\mathbf{AugRBAlg}_\lambda$. We first
identify the corresponding comonoid objects. The free Rota--Baxter
functor then provides the passage from bialgebras to cocycle bialgebras.

\subsection{Comonoid objects and the cocycle identity}\label{sec:mc}

A comonoid object in a monoidal category consists of an object $H$,
a coassociative morphism $\Delta:H\to H\boxtimes H$, and a morphism
from $H$ to the unit object satisfying the two counit identities.
We use the following terminology for the structures that occur here.

\begin{defn}[\cite{ZGG16}]\label{def:cb}
\begin{enumerate}
\item\label{it:cb-a}
An \textbf{operated bialgebra} is a bialgebra
$(H,m,u,\Delta,\varepsilon)$ together with a linear operator $P:H\to H$.
\item\label{it:cb-b}
An operated bialgebra is a \textbf{cocycle bialgebra} if
$$
\Delta P=P\otimes1_H+(\mathrm{id}_H\otimes P)\Delta.
$$
Here $P\otimes1_H$ denotes $h\mapsto P(h)\otimes1_H$.
\item\label{it:cb-c}
A \textbf{Rota--Baxter cocycle bialgebra of weight $\lambda$} is a
cocycle bialgebra whose operator satisfies the Rota--Baxter identity of
weight $\lambda$.
\end{enumerate}
If the underlying bialgebra has an antipode, we use the corresponding
term \textbf{cocycle Hopf algebra}.
A morphism is a bialgebra morphism commuting with the operators.
\end{defn}

The cocycle identity in Definition \ref{def:cb}(\ref{it:cb-b})
implies $\varepsilon P=0$. Indeed, applying
$\varepsilon\otimes\varepsilon$ gives
$$
\varepsilon P
=\varepsilon P+(\varepsilon\otimes\varepsilon P)\Delta
=\varepsilon P+\varepsilon P.
$$
We denote the categories of bialgebras and Rota--Baxter cocycle
bialgebras by $\mathbf{Bialg}$ and $\mathbf{RBCBialg}_\lambda$,
respectively. The underlying algebras in these two categories need not be
commutative.

\begin{theorem}\label{thm:comonoid}
Comonoid objects and their morphisms in
$\mathbf{AugRBAlg}_\lambda$ are precisely Rota--Baxter cocycle
bialgebras of weight $\lambda$ and their morphisms.
\end{theorem}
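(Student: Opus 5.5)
The plan is to unpack both directions directly from the definitions, matching data to data. A comonoid object in $(\mathbf{AugRBAlg}_\lambda,\boxtimes,\mathbf 1)$ is an object $(H,P,\varepsilon_H)$ with morphisms $\Delta:H\to H\boxtimes H$ and $\epsilon:H\to\mathbf 1$ such that coassociativity holds under the ordinary associator and the counit identities hold under $\ell_H,\rho_H$. The first observation is that a morphism $H\to\mathbf 1=(\mathbf k,0,\mathrm{id}_{\mathbf k})$ must satisfy $\mathrm{id}_{\mathbf k}\circ\epsilon=\varepsilon_H$. Hence the counit is forced to be the augmentation. Conversely, any unital algebra map $\varepsilon:H\to\mathbf k$ with $\varepsilon P=0$ is such a morphism. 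So the counit of the comonoid and the augmentation coincide.

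Next I will sort the conditions on $\Delta$. Being a unital algebra map $H\to H\otimes H$, together with the counit being an algebra map, coassociativity and the counit identities, says exactly that $(H,m,u,\Delta,\varepsilon)$ is a bialgebra. Preservation of augmentations, $(\varepsilon\otimes\varepsilon)\Delta=\varepsilon$, follows automatically from the counit identity. The only remaining condition is that $\Delta$ intertwines the operators:
$$
\Delta P=(P\otimes e_H+\mathrm{id}_H\otimes P)\Delta.
$$
This is the key step. By the counit identity $(\mathrm{id}\otimes\varepsilon)\Delta=\mathrm{id}$, we get $(P\otimes e_H)\Delta(h)=P(h)\otimes 1_H$, since $(P\otimes u_H\varepsilon)\Delta=(P\otimes u_H)(\mathrm{id}\otimes\varepsilon)\Delta$ up to the identification $H\otimes\mathbf k\cong H$. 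Therefore the displayed condition is exactly the cocycle identity of Definition~\ref{def:cb}(\ref{it:cb-b}). The Rota--Baxter identity for $P$ is part of being an object, so a comonoid object is a Rota--Baxter cocycle bialgebra.

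For the converse, start from a Rota--Baxter cocycle bialgebra. The remark after Definition~\ref{def:cb} gives $\varepsilon P=0$, so $(H,P,\varepsilon)$ is an augmented Rota--Baxter algebra. Reversing the computation above shows $\Delta$ commutes with $P_{H\boxtimes H}$, so $\Delta$ is a morphism. The counit $\varepsilon$ is a morphism to $\mathbf 1$ because $0\circ\varepsilon=\varepsilon P$, and the comonoid axioms are the bialgebra axioms.

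For morphisms, a comonoid morphism is a morphism $f$ in $\mathbf{AugRBAlg}_\lambda$ with $(f\otimes f)\Delta=\Delta f$ and $\varepsilon f=\varepsilon$. This is precisely a bialgebra morphism commuting with the operators. Augmentation preservation is the counit compatibility, so the two notions coincide.

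I expect no real obstacle. The only point needing care is the identification of $(P\otimes e_H)\Delta$ with $P\otimes 1_H$, which uses the right counit identity. This is the one place where the orientation of $\boxtimes$ matters: the flip-conjugated product would instead give the opposite-sided cocycle identity.
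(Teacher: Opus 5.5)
Your proposal is correct and follows essentially the same route as the paper: the counit is forced to be the augmentation, the comonoid axioms become the bialgebra axioms, and the right counit identity turns $(P\otimes e_H)\Delta$ into $h\mapsto P(h)\otimes 1_H$, so intertwining with $P_{H\boxtimes H}$ is exactly the cocycle identity, with the converse using $\varepsilon P=0$ from the remark after Definition~\ref{def:cb}. Your side remark that $\boxtimes^{\mathrm{fc}}$ would instead produce the opposite-sided cocycle identity is also correct.
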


\begin{proof}
Let a comonoid structure on $(H,P,\varepsilon)$ be given.
Its counit must equal $\varepsilon$, because a morphism from
$(H,P,\varepsilon)$ to
$(\mathbf{k},0,\mathrm{id}_{\mathbf{k}})$ preserves the augmentation.
The coproduct is a unital algebra morphism, so the comonoid axioms
make $(H,m,u,\Delta,\varepsilon)$ a bialgebra.
For $h\in H$, use Sweedler notation
$\Delta(h)=\sum h_{(1)}\otimes h_{(2)}$. The counit identity gives
\begin{align*}
P_{H\boxtimes H}\Delta(h)
&=\sum P(h_{(1)})\otimes\varepsilon(h_{(2)})1_H
  +\sum h_{(1)}\otimes P(h_{(2)})\\
&=P(h)\otimes1_H+(\mathrm{id}_H\otimes P)\Delta(h).
\end{align*}
Thus the condition $\Delta P=P_{H\boxtimes H}\Delta$ is the
cocycle identity.

Conversely, a Rota--Baxter cocycle bialgebra satisfies
$\varepsilon P=0$ by the preceding calculation.
The identity
$(\varepsilon\otimes\varepsilon)\Delta=\varepsilon$
makes $\Delta$ an augmented algebra morphism.
The displayed calculation and the cocycle identity show that
$\Delta$ commutes with the operators.
Hence $\Delta$ and $\varepsilon$ are morphisms in
$\mathbf{AugRBAlg}_\lambda$, and their bialgebra axioms are exactly
the comonoid axioms. In both descriptions the morphisms preserve
the multiplication, unit, coproduct, counit and operator.
\end{proof}

Let $\mathbf{CAlgAugRB}_\lambda$ denote the full subcategory of
commutative augmented Rota--Baxter algebras.
It is closed under $\boxtimes$, so Theorem \ref{thm:comonoid}
identifies its comonoid objects with commutative Rota--Baxter
cocycle bialgebras.

\begin{coro}\label{coro:canonical-cocycle}
For every bialgebra $(H,m,u,\Delta,\varepsilon)$, the operator
$P=-\lambda(\mathrm{id}_H-u\varepsilon)$
makes $H$ a Rota--Baxter cocycle bialgebra of weight $\lambda$.
This construction defines a fully faithful functor
$\mathbf{Bialg}\to\mathbf{RBCBialg}_\lambda$.
\end{coro}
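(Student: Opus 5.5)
The plan is to reduce everything to Theorem~\ref{thm:comonoid} and Proposition~\ref{thm:canonical-embedding}. First, $P=-\lambda(\mathrm{id}_H-u\varepsilon)$ is a Rota--Baxter operator of weight $\lambda$ with $\varepsilon P=0$, by Example~\ref{ex:basic}(\ref{it:ex-can}) applied to the augmented algebra $(H,\varepsilon)$. So $(H,P,\varepsilon)$ is an object of $\mathcal C_\lambda$. By Theorem~\ref{thm:comonoid}, it remains only to check that $\Delta$ is a morphism $(H,P,\varepsilon)\to(H,P,\varepsilon)\boxtimes(H,P,\varepsilon)$ in $\mathbf{AugRBAlg}_\lambda$. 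Equivalently, we must verify the cocycle identity $\Delta P=P\otimes 1_H+(\mathrm{id}_H\otimes P)\Delta$. The counit $\varepsilon$ is automatically a morphism to $\mathbf 1$.

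I will verify the cocycle identity directly in Sweedler notation. The left side is
\[
\Delta P(h)=-\lambda\bigl(\Delta(h)-\varepsilon(h)1_H\otimes1_H\bigr).
\]
On the right, the first term is $-\lambda\bigl(h\otimes1_H-\varepsilon(h)1_H\otimes1_H\bigr)$. The second term is
\[
-\lambda\sum h_{(1)}\otimes\bigl(h_{(2)}-\varepsilon(h_{(2)})1_H\bigr)=-\lambda\bigl(\Delta(h)-h\otimes 1_H\bigr),
\]
where the counit identity gives $\sum h_{(1)}\varepsilon(h_{(2)})=h$. Summing the two terms, the $h\otimes1_H$ contributions cancel and the result equals the left side.

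An alternative, more structural route is also available. Proposition~\ref{thm:canonical-embedding} shows that the functor $(A,\varepsilon_A)\mapsto(A,-\lambda(\mathrm{id}_A-e_A),\varepsilon_A)$ is strong monoidal, with identity structure maps. A bialgebra is exactly a comonoid in $\mathbf{AugAlg}$, since the counit serves as augmentation. A strong monoidal functor sends comonoids to comonoids, and hence produces a comonoid in $\mathbf{AugRBAlg}_\lambda$, which Theorem~\ref{thm:comonoid} identifies as a Rota--Baxter cocycle bialgebra.

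Functoriality and full faithfulness follow in the same way. A bialgebra morphism $f$ satisfies $fu\varepsilon=u\varepsilon f$, hence commutes with $P$, so the assignment is a functor. It is faithful because it is the identity on underlying maps. It is full because a morphism in $\mathbf{RBCBialg}_\lambda$ is in particular a bialgebra morphism. I expect no real obstacle. The only point needing care is that the comonoid counit is forced to equal the given $\varepsilon$, which Theorem~\ref{thm:comonoid} already handles.
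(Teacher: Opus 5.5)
Your proposal is correct. Your primary argument verifies the cocycle identity by a direct Sweedler computation. Together with Example~\ref{ex:basic}(\ref{it:ex-can}), this already proves the claim straight from Definition~\ref{def:cb}, so the appeal to Theorem~\ref{thm:comonoid} in that route is harmless but unnecessary.

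The paper's proof is exactly your ``alternative'' route. The strong monoidal embedding of Proposition~\ref{thm:canonical-embedding} carries the comonoid $H$ in $\mathbf{AugAlg}$ to a comonoid in $\mathbf{AugRBAlg}_\lambda$, and Theorem~\ref{thm:comonoid} identifies this with the cocycle structure.

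The two routes are the same computation seen at different levels. On $H\boxtimes H$ the operator is $-\lambda(\mathrm{id}_{H\otimes H}-e_H\otimes e_H)$. Since $\Delta$ is an augmented algebra morphism, it commutes with the canonical operators $-\lambda(\mathrm{id}-e)$. Your calculation unpacks this after the counit identity converts $P\otimes1_H+(\mathrm{id}_H\otimes P)\Delta$ into $P_{H\boxtimes H}\Delta$.

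What each route buys: the direct one is elementary and self-contained. The structural one explains why every bialgebra carries this cocycle structure, and why full faithfulness is immediate from the embedding.

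Your functoriality and full-faithfulness argument matches the paper's.
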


\begin{proof}
The second functor in Proposition \ref{thm:canonical-embedding}
is strong monoidal and hence sends comonoid objects to comonoid
objects. Theorem \ref{thm:comonoid} gives the stated cocycle
structure. Every bialgebra morphism commutes with $u\varepsilon$,
so it commutes with the displayed operators. This proves full
faithfulness.
\end{proof}

\begin{coro}\label{coro:iterated-cocycle}
Let $(H,P,\Delta,\varepsilon)$ be a Rota--Baxter cocycle bialgebra.
Write $\Delta^{[1]}=\mathrm{id}_H$ and let
$\Delta^{[n]}:H\to H^{\otimes n}$ be the iterated coproduct.
For every $n\geq1$ and $h\in H$, we have
$$
\Delta^{[n]}P(h)
=\sum_{i=1}^n
\left((\mathrm{id}_H^{\otimes(i-1)}\otimes P)\Delta^{[i]}(h)\right)
\otimes1_H^{\otimes(n-i)}.
$$
\end{coro}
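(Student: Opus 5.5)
The plan is to argue by induction on $n$, with the convention $\Delta^{[n+1]}=(\Delta^{[n]}\otimes\mathrm{id}_H)\Delta$. Coassociativity guarantees that this agrees with any other bracketing of the iterated coproduct. The base case $n=1$ is immediate, since the right side reduces to $(P)\Delta^{[1]}(h)=P(h)$. The case $n=2$ is the cocycle identity of Definition~\ref{def:cb}(\ref{it:cb-b}), read with $\Delta^{[2]}=\Delta$. In that case the $i=1$ term is $P(h)\otimes1_H$ and the $i=2$ term is $(\mathrm{id}_H\otimes P)\Delta(h)$.

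For the inductive step, assume the formula for $n$ and compute
\[
\Delta^{[n+1]}P=(\mathrm{id}_H^{\otimes(n-1)}\otimes\Delta)\Delta^{[n]}P,
\]
which again uses coassociativity to split off the last factor. Substituting the induction hypothesis, we apply $\mathrm{id}^{\otimes(n-1)}\otimes\Delta$ to each summand.

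For $i<n$, the last tensor factor of the $i$th summand is $1_H$, and $\Delta(1_H)=1_H\otimes1_H$. So these summands become the corresponding terms of the formula for $n+1$ with one extra $1_H$.

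For $i=n$, the summand is $(\mathrm{id}^{\otimes(n-1)}\otimes P)\Delta^{[n]}(h)$. Applying $\mathrm{id}^{\otimes(n-1)}\otimes\Delta$ to it gives $(\mathrm{id}^{\otimes(n-1)}\otimes\Delta P)\Delta^{[n]}(h)$. Using the cocycle identity on the last factor, this equals
\[
\bigl((\mathrm{id}^{\otimes(n-1)}\otimes P)\Delta^{[n]}(h)\bigr)\otimes1_H
+(\mathrm{id}^{\otimes n}\otimes P)(\mathrm{id}^{\otimes(n-1)}\otimes\Delta)\Delta^{[n]}(h).
\]
The second term is $(\mathrm{id}^{\otimes n}\otimes P)\Delta^{[n+1]}(h)$, again by coassociativity. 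These two pieces are exactly the $i=n$ and $i=n+1$ terms of the formula for $n+1$, which completes the induction.

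An alternative is to use Theorem~\ref{thm:comonoid} and Corollary~\ref{thm:iterated}. Since $\Delta$ is a morphism $H\to H\boxtimes H$, the iterated coproduct intertwines $P$ with $P_{H^{\boxtimes n}}=\sum_i\mathrm{id}^{\otimes(i-1)}\otimes P\otimes e^{\otimes(n-i)}$. The counit identities then convert the factors $e_H$ applied to the trailing Sweedler components into the trailing $1_H$'s. I would include this as a remark.

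The only real obstacle is the bookkeeping that identifies $(\mathrm{id}^{\otimes(i-1)}\otimes\Delta)$ applied to $\Delta^{[i]}$ with $\Delta^{[i+1]}$. This is handled by stating at the outset that coassociativity makes every bracketing of $\Delta^{[n]}$ coincide.
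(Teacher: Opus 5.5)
Your induction is correct, but it is not the paper's route. The paper's proof is essentially the alternative you relegate to a remark.

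In the paper, $\Delta^{[n]}$ is regarded as a morphism $H\to H^{\boxtimes n}$ in $\mathbf{AugRBAlg}_\lambda$, built from $\Delta$ via Theorem~\ref{thm:comonoid} and bifunctoriality of $\boxtimes$. It therefore intertwines $P$ with the iterated operator $\sum_i \mathrm{id}_H^{\otimes(i-1)}\otimes P\otimes e_H^{\otimes(n-i)}$ of Corollary~\ref{thm:iterated}. The counit identities then convert $(\mathrm{id}_H^{\otimes(i-1)}\otimes P\otimes e_H^{\otimes(n-i)})\Delta^{[n]}(h)$ into $\bigl((\mathrm{id}_H^{\otimes(i-1)}\otimes P)\Delta^{[i]}(h)\bigr)\otimes 1_H^{\otimes(n-i)}$. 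Given the monoidal machinery, this takes two lines. It also explains the shape of the formula: the trailing $1_H$'s are exactly what the augmentation projections $e_H$ in $P_{H^{\boxtimes n}}$ leave behind.

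Your direct induction uses only the cocycle identity, coassociativity, and $\Delta(1_H)=1_H\otimes1_H$. It is self-contained and makes the bookkeeping with bracketings explicit. It also shows the formula holds for any cocycle bialgebra, since the Rota--Baxter identity is never used. The paper's route needs that identity, at least formally, to place $H$ and $H^{\boxtimes n}$ in $\mathbf{AugRBAlg}_\lambda$ and invoke the monoidal structure.
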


\begin{proof}
The map $\Delta^{[n]}$ is a morphism to the $n$-fold monoidal
product of $(H,P,\varepsilon)$.
Apply Corollary \ref{thm:iterated} to its target operator.
The counit identities remove the final $n-i$ augmentation
projections in the $i$th term and give the formula.
\end{proof}

\subsection{The free Rota--Baxter bimonad}\label{subsec:bimonad}

We use the standard theory of opmonoidal functors and bimonads
\cite{BLV11,S03}.
An opmonoidal functor
$F:(\mathcal C,\otimes,\mathbf 1)\to
(\mathcal D,\boxtimes,\mathbf J)$ has natural maps
$F(X\otimes Y)\to F(X)\boxtimes F(Y)$ and
$F(\mathbf 1)\to\mathbf J$ compatible with associators and
unit constraints. A bimonad is a monad $(T,\eta,\mu)$ for which
$T$ is opmonoidal and $\eta,\mu$ are opmonoidal natural
transformations.
The \textbf{Eilenberg-Moore category} $\mathcal C^T$ has objects
$(A,\alpha)$ with $\alpha:TA\to A$ satisfying
$\alpha\eta_A=\mathrm{id}_A$ and
$\alpha\mu_A=\alpha T(\alpha)$.
A morphism $f:(A,\alpha)\to(B,\beta)$ satisfies
$f\alpha=\beta T(f)$.
For a bimonad, this category has the induced monoidal structure
\cite{BLV11,S03}.
We suppress the ordinary associators and unit constraints below.

The forgetful functor
$U:\mathbf{AugRBAlg}_\lambda\to\mathbf{AugAlg}$
is strict monoidal by Theorem \ref{thm:monoidal}.
For an augmented algebra $A$, let
$(\mathfrak F_\lambda(A),P_{\mathfrak F(A)},j_A)$ be the free
associative Rota--Baxter algebra of weight $\lambda$ generated by
the algebra $A$. Thus every algebra morphism from $A$ to a
Rota--Baxter algebra extends uniquely to a Rota--Baxter morphism from $\mathfrak F_\lambda(A)$ \cite{G12}.
For an arbitrary algebra $A$, this free object can be constructed
from the free unital Rota--Baxter algebra on the underlying set of
$A$ by imposing linearity, multiplicativity and preservation of
the unit on the generating map. The quotient is taken by the
smallest two-sided ideal stable under the operator that contains
these relations. This construction requires no freeness assumption
on the underlying $\mathbf{k}$-module of $A$.
The augmentation of $A$ extends uniquely to a Rota--Baxter morphism
$\varepsilon_{\mathfrak F(A)}:\mathfrak F_\lambda(A)\to(\mathbf{k},0)$.
If an augmented algebra morphism $f:A\to U(B)$ extends to
a Rota--Baxter morphism $\widetilde f:\mathfrak F_\lambda(A)\to B$,
then $\varepsilon_B\widetilde f$ and
$\varepsilon_{\mathfrak F(A)}$ are Rota--Baxter morphisms to
$(\mathbf{k},0)$ agreeing on $A$. Freeness gives
$\varepsilon_B\widetilde f=\varepsilon_{\mathfrak F(A)}$.
Thus the free extension preserves the augmentation, and these
extensions give the adjunction $\mathfrak F_\lambda\dashv U$.
Also, $j_A$ is injective, since the identity of $A$ extends to
a Rota--Baxter morphism $\mathfrak F_\lambda(A)\to(A,0)$
that is a left inverse of $j_A$.

The monad for free commutative Rota--Baxter algebras was considered
in \cite{ZGK16}.
In general, a left adjoint to a strong monoidal functor has
an opmonoidal structure, and the resulting monad is a bimonad
\cite[Sec.~2.5]{BLV11}.
The next theorem applies this principle to the monoidal product
constructed above. We give the structure maps explicitly and
verify that the comparison equivalence recovers this monoidal product.

\begin{theorem}\label{thm:bimonad}
Let $T_\lambda=U\mathfrak F_\lambda$ be the monad on
$\mathbf{AugAlg}$ induced by $\mathfrak F_\lambda\dashv U$.
\begin{enumerate}
\item\label{it:bm-a}
The unique augmented Rota--Baxter morphisms
$$
\delta_{A,B}:\mathfrak F_\lambda(A\otimes B)
\longrightarrow\mathfrak F_\lambda(A)\boxtimes\mathfrak F_\lambda(B),
\qquad
\delta_{A,B}j_{A\otimes B}=j_A\otimes j_B,
$$
together with $\delta_0:\mathfrak F_\lambda(\mathbf{k})\to\mathbf 1$,
give $T_\lambda$ a bimonad structure.
\item\label{it:bm-b}
The comparison functor gives a monoidal equivalence
$$
\mathbf{AugRBAlg}_\lambda\simeq\mathbf{AugAlg}^{T_\lambda}.
$$
The monoidal product of the corresponding monad algebras has operator
$P_A\otimes e_B+\mathrm{id}_A\otimes P_B$.
\item\label{it:bm-c}
The free functor is opmonoidal. Consequently it sends comonoid
objects and their morphisms in $\mathbf{AugAlg}$ to comonoid
objects and their morphisms in $\mathbf{AugRBAlg}_\lambda$.
\end{enumerate}
\end{theorem}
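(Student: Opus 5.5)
The plan is to apply the general doctrinal adjunction principle for a left adjoint to a strong monoidal functor, but to make the structure maps explicit using the universal property of $\mathfrak F_\lambda$.

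For (\ref{it:bm-a}), note that $j_A\otimes j_B:A\otimes B\to U(\mathfrak F_\lambda(A))\otimes U(\mathfrak F_\lambda(B))=U(\mathfrak F_\lambda(A)\boxtimes\mathfrak F_\lambda(B))$ is an augmented algebra morphism, since $U$ is strict monoidal by Theorem~\ref{thm:monoidal}. The adjunction $\mathfrak F_\lambda\dashv U$ then yields a unique $\delta_{A,B}$, and it automatically preserves augmentations by the argument preceding the theorem. Similarly $\delta_0$ is the extension of $\mathrm{id}_{\mathbf{k}}:\mathbf{k}\to U(\mathbf 1)$; equivalently it is the augmentation $\varepsilon_{\mathfrak F(\mathbf{k})}$.

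\begin{itemize}
\item \emph{Naturality, coassociativity and counitality of $\delta$.} Both sides of each required square are Rota--Baxter morphisms out of a free object $\mathfrak F_\lambda(A\otimes B\otimes C)$, $\mathfrak F_\lambda(A\otimes B)$ and so on. By uniqueness, it suffices to check agreement after precomposing with the generator map $j$. There both sides reduce to $j_A\otimes j_B\otimes j_C$, or to $j_A$ via the unit constraints. This uses that $\delta_{A,B}\otimes\mathrm{id}$ is itself a morphism in $\mathbf{AugRBAlg}_\lambda$, which holds because $\boxtimes$ is a bifunctor (Theorem~\ref{thm:monoidal}).
\item \emph{Opmonoidality of $\eta$ and $\mu$.} For $\eta_A=j_A$, the identity $\delta_{A,B}j_{A\otimes B}=j_A\otimes j_B$ is the definition. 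For $\mu_A=U(\varepsilon^{\mathrm{adj}}_{\mathfrak F(A)})$, where $\varepsilon^{\mathrm{adj}}$ is the counit of the adjunction, I would again compare two Rota--Baxter morphisms out of $\mathfrak F_\lambda(T_\lambda A\otimes T_\lambda B)$. These are $(\mu_A\otimes\mu_B)\,\delta_{TA,TB}\,\mathfrak F(\delta_{A,B})$ and $\delta_{A,B}\,\mu_{A\otimes B}$ in the appropriate form. Both restrict on generators to $\delta_{A,B}$, so they coincide. I would cite \cite[Sec.~2.5]{BLV11} to confirm that these are exactly the axioms needed.
\end{itemize}

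For (\ref{it:bm-b}), I would first prove monadicity of $U$. The cleanest route is to use the equivalence for the underlying Rota--Baxter algebras: a $T_\lambda$-algebra $(A,\alpha)$ gives the operator $P_A:=\alpha\circ P_{\mathfrak F(A)}\circ j_A$. The standard argument then shows that $\alpha$ is a Rota--Baxter morphism, and that the construction is inverse to the comparison functor. Alternatively, one can verify Beck's criterion: $U$ reflects isomorphisms, and it creates coequalizers of $U$-split pairs, since the operator descends along a split coequalizer. The induced monoidal product on $\mathbf{AugAlg}^{T_\lambda}$ sends $(A,\alpha)$ and $(B,\beta)$ to $(A\otimes B,(\alpha\otimes\beta)\delta_{A,B})$. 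It remains to compute the operator it determines, namely
\[
(\alpha\otimes\beta)\,\delta_{A,B}\,P_{\mathfrak F(A\otimes B)}\,j_{A\otimes B}.
\]
Because $\delta_{A,B}$ intertwines $P_{\mathfrak F(A\otimes B)}$ with $P_{\mathfrak F(A)\boxtimes\mathfrak F(B)}=P_{\mathfrak F(A)}\otimes e+\mathrm{id}\otimes P_{\mathfrak F(B)}$, this expression becomes $(\alpha\otimes\beta)\bigl(P_{\mathfrak F(A)}j_A\otimes e\,j_B+j_A\otimes P_{\mathfrak F(B)}j_B\bigr)$. Using $\alpha j_A=\mathrm{id}$ and $\beta e_{\mathfrak F(B)}j_B=e_B$, this equals $P_A\otimes e_B+\mathrm{id}_A\otimes P_B$. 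So the comparison functor is strict monoidal with identity structure maps, and the equivalence is monoidal.

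Part (\ref{it:bm-c}) then follows formally. An opmonoidal functor sends a comonoid $(C,\Delta,\varepsilon)$ to $\bigl(\mathfrak F_\lambda C,\ \delta_{C,C}\mathfrak F(\Delta),\ \delta_0\mathfrak F(\varepsilon)\bigr)$, and the comonoid axioms are preserved by naturality and coherence of $\delta$. Morphisms of comonoids are sent to morphisms of comonoids by naturality.

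I expect the main obstacle to be the monadicity step in (\ref{it:bm-b}), which requires showing that every $T_\lambda$-algebra structure map comes from a Rota--Baxter operator. My first approach would be the explicit recipe $P_A=\alpha P_{\mathfrak F(A)}j_A$, verifying the Rota--Baxter identity through the multiplicativity of $\alpha$ and the associativity axiom $\alpha\mu_A=\alpha T(\alpha)$.
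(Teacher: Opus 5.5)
Your proposal is correct and follows essentially the paper's route: comparisons on generators via the free property for naturality, coherence and the compatibility of $\eta,\mu$ with $\delta$, then the explicit operator $P_\alpha=\alpha P_{\mathfrak F(A)}j_A$ for the comparison equivalence. As in the paper, the intertwining $\alpha P_{\mathfrak F(A)}=P_\alpha\alpha$ comes from applying $\alpha\mu_A=\alpha T_\lambda(\alpha)$ to $P_{\mathfrak F(T_\lambda A)}(j_{T_\lambda A}(z))$, and the tensor operator is the same computation of $(\alpha\otimes\beta)\delta_{A,B}$ on $P_{\mathfrak F(A\otimes B)}j_{A\otimes B}$. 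One small slip: the two composites in the $\mu$-axiom have source $\mathfrak F_\lambda(T_\lambda(A\otimes B))$, not $\mathfrak F_\lambda(T_\lambda A\otimes T_\lambda B)$, and it is precomposition with $j_{T_\lambda(A\otimes B)}$ that makes both equal to $\delta_{A,B}$.
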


\begin{proof}
(\ref{it:bm-a})
The map $j_A\otimes j_B$ is an augmented algebra morphism to
the underlying algebra of
$\mathfrak F_\lambda(A)\boxtimes\mathfrak F_\lambda(B)$.
The free property therefore gives $\delta_{A,B}$.
The map $\delta_0$ is the unique Rota--Baxter extension of the
identity of $\mathbf{k}$ to the unit object.
For augmented algebra morphisms $f:A\to A'$ and $g:B\to B'$,
the two composites in the naturality square are Rota--Baxter
morphisms agreeing on $A\otimes B$, so they are equal.
Likewise, the two opmonoidal associativity composites agree on
$A\otimes B\otimes C$, where both equal
$j_A\otimes j_B\otimes j_C$.
The two unit diagrams agree on $A$.
This proves naturality and coherence.

Write $\eta_A=j_A$ for the monad unit and
$\mu_A:T_\lambda^2A\to T_\lambda A$ for the free evaluation
extending the identity of $T_\lambda A$.
We also use $\delta$ for its underlying augmented algebra maps.
The defining equation for $\delta$ gives
$\delta_{A,B}\eta_{A\otimes B}=\eta_A\otimes\eta_B$.
For the multiplication, the required identity is
$$
\delta_{A,B}\mu_{A\otimes B}
=(\mu_A\otimes\mu_B)\delta_{T_\lambda A,T_\lambda B}
T_\lambda(\delta_{A,B}).
$$
Both sides are Rota--Baxter morphisms from
$\mathfrak F_\lambda(T_\lambda(A\otimes B))$ to
$\mathfrak F_\lambda(A)\boxtimes\mathfrak F_\lambda(B)$.
After precomposition with $j_{T_\lambda(A\otimes B)}$, both
are $\delta_{A,B}$. Freeness therefore proves the identity.
The same comparison on generators gives
$\delta_0\eta_{\mathbf{k}}=\mathrm{id}_{\mathbf{k}}$ and
$\delta_0\mu_{\mathbf{k}}=\delta_0T_\lambda(\delta_0)$.
Hence $T_\lambda$ is a bimonad.

(\ref{it:bm-b})
Let $\alpha:T_\lambda A\to A$ be a monad algebra. Thus $\alpha$
is an augmented algebra morphism satisfying
$\alpha j_A=\mathrm{id}_A$ and
$\alpha\mu_A=\alpha T_\lambda(\alpha)$.
Define
$$
P_\alpha(a)=\alpha\bigl(P_{\mathfrak F(A)}(j_A(a))\bigr).
$$
For $z\in T_\lambda A$, apply the second monad algebra identity
to $P_{\mathfrak F(T_\lambda A)}(j_{T_\lambda A}(z))$.
Since $\mu_A$ and $\mathfrak F_\lambda(\alpha)$ are
Rota--Baxter morphisms, this gives
$$
\alpha\bigl(P_{\mathfrak F(A)}(z)\bigr)
=\alpha\bigl(P_{\mathfrak F(A)}(j_A(\alpha(z)))\bigr)
=P_\alpha(\alpha(z)).
$$
Therefore $\alpha P_{\mathfrak F(A)}=P_\alpha\alpha$.
For $a,b\in A$, it follows that
\begin{align*}
P_\alpha(a)P_\alpha(b)
&=\alpha\bigl(P_{\mathfrak F(A)}(j_A(a))
              P_{\mathfrak F(A)}(j_A(b))\bigr)\\
&=\alpha P_{\mathfrak F(A)}
 \bigl(P_{\mathfrak F(A)}(j_A(a))j_A(b)
       +j_A(a)P_{\mathfrak F(A)}(j_A(b))
       +\lambda j_A(a)j_A(b)\bigr)\\
&=P_\alpha\bigl(P_\alpha(a)b+aP_\alpha(b)+\lambda ab\bigr).
\end{align*}
Also $\varepsilon_AP_\alpha=0$, because $\alpha$ preserves
the augmentation.

Conversely, let $(A,P,\varepsilon_A)$ be an augmented Rota--Baxter
algebra. Its free evaluation
$\alpha:\mathfrak F_\lambda(A)\to A$ extends $\mathrm{id}_A$
and commutes with the operators.
The identity $\alpha j_A=\mathrm{id}_A$ holds by construction.
The maps $\alpha\mu_A$ and $\alpha T_\lambda(\alpha)$ are
Rota--Baxter morphisms agreeing on $T_\lambda A$, so they are
equal. Starting from $\alpha$, the recovered operator makes $\alpha$
the Rota--Baxter extension of $\mathrm{id}_A$, so freeness
recovers the original action. Starting from $P$, the formula
$P_\alpha(a)=\alpha(P_{\mathfrak F(A)}(j_A(a)))=P(a)$
recovers the original operator.
A morphism of monad algebras commutes with the recovered operators
by applying its defining equation to
$P_{\mathfrak F(A)}(j_A(a))$.
Conversely, an augmented algebra morphism commuting with the
operators respects evaluations by freeness.
This proves the comparison equivalence.

For monad algebras $(A,\alpha)$ and $(B,\beta)$, the
Eilenberg-Moore tensor action is
$(\alpha\otimes\beta)\delta_{A,B}$.
Applied to $P_{\mathfrak F(A\otimes B)}(j_{A\otimes B}(a\otimes b))$,
it gives
$P_A(a)\otimes\varepsilon_B(b)1_B+a\otimes P_B(b)$.
Thus the comparison equivalence preserves the monoidal product $\boxtimes$
and its constraints.

(\ref{it:bm-c})
The maps constructed in (\ref{it:bm-a}) are opmonoidal maps for
$\mathfrak F_\lambda$ itself. An opmonoidal functor sends a
comonoid $(A,\Delta_A,\varepsilon_A)$ to the comonoid with
coproduct $\delta_{A,A}\mathfrak F_\lambda(\Delta_A)$ and
counit $\delta_0\mathfrak F_\lambda(\varepsilon_A)$.
This gives the assertion.
\end{proof}

\begin{coro}\label{cor:bialgebraic-monadicity}
The bimonad $T_\lambda$ induces a monad
$\widehat T_\lambda$ on $\mathbf{Bialg}$, and there is an
equivalence
$$
\mathbf{RBCBialg}_\lambda\simeq
\mathbf{Bialg}^{\widehat T_\lambda}
$$
commuting with the forgetful functors to $\mathbf{Bialg}$.
\end{coro}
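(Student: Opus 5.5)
The plan is to lift the bimonad $T_\lambda$ along the forgetful functor from comonoids. For a bimonad on a monoidal category, the monad restricts to the category of comonoid objects. This is because an opmonoidal functor sends comonoids to comonoids, and opmonoidal natural transformations between opmonoidal functors become comonoid morphisms on comonoid objects. Concretely, for a bialgebra $H$ (a comonoid in $\mathbf{AugAlg}$ with augmentation its counit), Theorem~\ref{thm:bimonad}(\ref{it:bm-c}) makes $\mathfrak F_\lambda(H)$ a comonoid in $\mathbf{AugRBAlg}_\lambda$ with coproduct $\delta_{H,H}\mathfrak F_\lambda(\Delta_H)$ and counit $\delta_0\mathfrak F_\lambda(\varepsilon_H)$. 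Applying $U$, which is strict monoidal, gives a bialgebra $\widehat T_\lambda(H)$. The unit $\eta_H=j_H$ and multiplication $\mu_H$ are comonoid morphisms because they are opmonoidal natural transformations; this is exactly the compatibility $\delta_{A,B}\eta_{A\otimes B}=\eta_A\otimes\eta_B$ and the $\mu$-identity verified in the proof of Theorem~\ref{thm:bimonad}(\ref{it:bm-a}), together with naturality of $\eta,\mu$ applied to $\Delta_H$ and $\varepsilon_H$. The monad axioms for $\widehat T_\lambda$ then hold because they hold for the underlying maps in $T_\lambda$.

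Next I identify $\mathbf{Bialg}^{\widehat T_\lambda}$. An object is a bialgebra $H$ with an action $\alpha:T_\lambda H\to H$ that is a $T_\lambda$-algebra in $\mathbf{AugAlg}$ and also a bialgebra morphism $\widehat T_\lambda(H)\to H$. By Theorem~\ref{thm:bimonad}(\ref{it:bm-b}), the first condition is the same as an augmented Rota--Baxter structure $P_\alpha$ on $H$, with $\alpha$ the free evaluation. The key step is the \emph{coalgebra compatibility} of $\alpha$:
\[
\Delta_H\alpha=(\alpha\otimes\alpha)\delta_{H,H}\mathfrak F_\lambda(\Delta_H),
\]
and I claim this holds if and only if $\Delta_H:(H,P_\alpha)\to(H,P_\alpha)\boxtimes(H,P_\alpha)$ is a Rota--Baxter morphism.

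For the direction ($\Leftarrow$), note that the right side is $\alpha_{H\boxtimes H}\circ\mathfrak F_\lambda(\Delta_H)$, where $\alpha_{H\boxtimes H}=(\alpha\otimes\alpha)\delta_{H,H}$ is the Eilenberg--Moore action of the monoidal product, as in the proof of Theorem~\ref{thm:bimonad}(\ref{it:bm-b}). If $\Delta_H$ is a Rota--Baxter morphism, both sides are Rota--Baxter morphisms $\mathfrak F_\lambda(H)\to H\boxtimes H$ that agree on $j_H(H)$, so freeness gives equality. For the direction ($\Rightarrow$), evaluate both sides on $P_{\mathfrak F(H)}(j_H(h))$. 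The left side is $\Delta_H P_\alpha(h)$. The right side is $P_{H\boxtimes H}\Delta_H(h)$, because $\alpha_{H\boxtimes H}$ and $\mathfrak F_\lambda(\Delta_H)$ commute with the operators. The counit condition is automatic: the counit of $H$ is its augmentation, and $\varepsilon P=0$ holds. By Theorem~\ref{thm:comonoid}, such objects are exactly Rota--Baxter cocycle bialgebras.

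Morphisms match in the same way. A $\widehat T_\lambda$-algebra morphism is a bialgebra morphism that is a $T_\lambda$-algebra morphism, which by Theorem~\ref{thm:bimonad}(\ref{it:bm-b}) is one commuting with the operators. The identifications on objects and morphisms clearly commute with the forgetful functors to $\mathbf{Bialg}$. I expect the main obstacle to be the first step: making rigorous that opmonoidality of $\eta$ and $\mu$ yields comonoid morphisms, including the counit compatibility via $\delta_0$. I would check these on generators $j_H(H)$ using freeness, as in Theorem~\ref{thm:bimonad}(\ref{it:bm-a}).
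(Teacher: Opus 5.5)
Your proposal is correct and follows the paper's route. You lift $T_\lambda$ to bialgebras using the opmonoidality of $\eta$ and $\mu$. You then read the coproduct compatibility of the evaluation $\alpha$ as the condition that $\Delta_H$ be a monad-algebra morphism into the Eilenberg--Moore product, and you conclude with Theorem~\ref{thm:bimonad}(\ref{it:bm-b}) and Theorem~\ref{thm:comonoid}. Your freeness argument in both directions spells out the step that the paper cites directly from the comparison equivalence.
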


\begin{proof}
Bialgebras are comonoid objects in $\mathbf{AugAlg}$.
The opmonoidal functor $T_\lambda$ preserves comonoids, and the
opmonoidality of its unit and multiplication makes them comonoid
morphisms. This defines $\widehat T_\lambda$.
By Theorem \ref{thm:bimonad}(\ref{it:bm-b}), an algebra over this
monad is an augmented Rota--Baxter algebra whose evaluation is a
bialgebra morphism.
Compatibility of evaluation with the coproduct is the condition
that the coproduct be a morphism of monad algebras to their
Eilenberg--Moore monoidal product.
Theorems \ref{thm:bimonad}(\ref{it:bm-b}) and \ref{thm:comonoid}
identify this condition with the cocycle identity.
The same correspondence preserves morphisms and the underlying
bialgebras.
\end{proof}

\section{Free cocycle bialgebras and Hopf structures}\label{sec:frb}

We now apply the opmonoidal free functor to bialgebras.
The comonoid description gives the coproduct, and freeness
gives its universal property. We then carry out the construction
in the commutative category and prove a common filtration theorem
for both constructions. The final examples describe the resulting
coproducts and antipodes.

\subsection{The free construction on a bialgebra}

Cocycle bialgebra structures on free Rota--Baxter algebras generated by a set were constructed using rooted forests in
\cite{ZGG16} and by angularly decorated rooted trees in \cite{G12}. 
Here the input is a bialgebra $A$ with its
prescribed coproduct, and the free construction extends that
coproduct by its universal property.

\begin{theorem}\label{thm:free-lifting}
Let $(A,m_A,u_A,\Delta_A,\varepsilon_A)$ be a bialgebra.
\begin{enumerate}
\item\label{it:fl-a}
There is a unique augmented Rota--Baxter algebra morphism
$$
\Delta_{\mathfrak F}:\mathfrak F_\lambda(A)
\longrightarrow\mathfrak F_\lambda(A)\boxtimes\mathfrak F_\lambda(A)
$$
such that
$\Delta_{\mathfrak F}j_A=(j_A\otimes j_A)\Delta_A$.
Together with the unique Rota--Baxter morphism
$\varepsilon_{\mathfrak F}:\mathfrak F_\lambda(A)\to(\mathbf{k},0)$
extending $\varepsilon_A$, it makes $\mathfrak F_\lambda(A)$
a Rota--Baxter cocycle bialgebra.
\item\label{it:fl-b}
These structures give a functor
$\mathfrak F_\lambda:\mathbf{Bialg}\to\mathbf{RBCBialg}_\lambda$
left adjoint to the forgetful functor. Thus every bialgebra
morphism $f:A\to H$ to a Rota--Baxter cocycle bialgebra
extends uniquely to a Rota--Baxter cocycle bialgebra morphism
$\widetilde f:\mathfrak F_\lambda(A)\to H$.
\end{enumerate}
\end{theorem}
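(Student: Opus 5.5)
The plan is to deduce everything from Theorem~\ref{thm:bimonad} and Theorem~\ref{thm:comonoid}, with freeness used for all equalities.

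For part~(\ref{it:fl-a}), regard the bialgebra $A$ as a comonoid object in $\mathbf{AugAlg}$, with augmentation $\varepsilon_A$. Since $\Delta_A$ is a unital algebra morphism preserving augmentations ($(\varepsilon_A\otimes\varepsilon_A)\Delta_A=\varepsilon_A$), Theorem~\ref{thm:bimonad}(\ref{it:bm-c}) applies. It makes $\mathfrak F_\lambda(A)$ a comonoid in $\mathbf{AugRBAlg}_\lambda$ with coproduct $\delta_{A,A}\mathfrak F_\lambda(\Delta_A)$ and counit $\delta_0\mathfrak F_\lambda(\varepsilon_A)$.

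Precomposing with $j_A$ and using naturality of $j$ together with $\delta_{A,A}j_{A\otimes A}=j_A\otimes j_A$ gives
\[
\delta_{A,A}\mathfrak F_\lambda(\Delta_A)j_A=(j_A\otimes j_A)\Delta_A .
\]
So this coproduct satisfies the required normalization. Uniqueness holds because $(j_A\otimes j_A)\Delta_A$ is an augmented algebra morphism into $U(\mathfrak F_\lambda(A)\boxtimes\mathfrak F_\lambda(A))$, and the adjunction $\mathfrak F_\lambda\dashv U$ gives a unique extension. The counit is likewise the unique Rota--Baxter extension of $\varepsilon_A$, which is $\varepsilon_{\mathfrak F}$. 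Theorem~\ref{thm:comonoid} then identifies the comonoid with a Rota--Baxter cocycle bialgebra. One could also check coassociativity and counitality directly: both sides of each identity are Rota--Baxter morphisms into $\boxtimes$-products, using Theorem~\ref{thm:monoidal} and Corollary~\ref{thm:iterated}, and they agree on $j_A(A)$ by the bialgebra axioms of $A$.

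For part~(\ref{it:fl-b}), functoriality of the construction comes from Theorem~\ref{thm:bimonad}(\ref{it:bm-c}), since opmonoidal functors send comonoid morphisms to comonoid morphisms. For the adjunction, let $f:A\to H$ be a bialgebra morphism into a Rota--Baxter cocycle bialgebra $(H,P,\Delta_H,\varepsilon_H)$. By Theorem~\ref{thm:comonoid}, $H$ is an object of $\mathbf{AugRBAlg}_\lambda$, and $f$ preserves augmentations because $\varepsilon_Hf=\varepsilon_A$. Hence there is a unique augmented Rota--Baxter morphism $\widetilde f$ with $\widetilde fj_A=f$.

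The remaining task is to show that $\widetilde f$ preserves coproducts, that is,
\[
\Delta_H\widetilde f=(\widetilde f\otimes\widetilde f)\Delta_{\mathfrak F}.
\]
Both sides are augmented Rota--Baxter morphisms $\mathfrak F_\lambda(A)\to H\boxtimes H$: the left because $\Delta_H$ is such a morphism by Theorem~\ref{thm:comonoid}, and the right by bifunctoriality of $\boxtimes$ (Theorem~\ref{thm:monoidal}). On $j_A$ they give $\Delta_Hf$ and $(f\otimes f)\Delta_A$ respectively, and these are equal since $f$ is a coalgebra map. Freeness therefore gives equality. Counits agree by the same argument with target $(\mathbf{k},0)$. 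Uniqueness of $\widetilde f$ is inherited from the algebra-level freeness, and naturality of the bijection is routine.

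I expect the main obstacle to be the existence statement in~(\ref{it:fl-a}). The real content is that $j_A\otimes j_A$ defines the structure map needed to write the coproduct as $\delta_{A,A}\mathfrak F_\lambda(\Delta_A)$, so that coassociativity follows from opmonoidal coherence. The point to watch is that $\boxtimes$ is nonsymmetric: for coassociativity, the two iterated operators on $\mathfrak F_\lambda(A)^{\otimes3}$ must coincide under the ordinary associator. Corollary~\ref{thm:iterated} provides exactly this.
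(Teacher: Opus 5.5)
Your proposal is correct and follows the paper's proof essentially step for step. For (a) you obtain the comonoid from Theorem~\ref{thm:bimonad}(\ref{it:bm-c}), identify it via Theorem~\ref{thm:comonoid} and get uniqueness from freeness; for (b) you compare the Rota--Baxter morphisms $\Delta_H\widetilde f$ and $(\widetilde f\otimes\widetilde f)\Delta_{\mathfrak F}$ on generators, the only cosmetic difference being that the paper checks $\varepsilon_H\widetilde f=\varepsilon_{\mathfrak F}$ by a separate freeness argument, whereas you take it from the augmented adjunction.
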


\begin{proof}
(\ref{it:fl-a})
The bialgebra $A$ is a comonoid in $\mathbf{AugAlg}$.
Theorem \ref{thm:bimonad}(\ref{it:bm-c}) gives a comonoid in
$\mathbf{AugRBAlg}_\lambda$ with structure maps
$$
\Delta_{\mathfrak F}=\delta_{A,A}\mathfrak F_\lambda(\Delta_A),
\qquad
\varepsilon_{\mathfrak F}=\delta_0\mathfrak F_\lambda(\varepsilon_A).
$$
Theorem \ref{thm:comonoid} identifies it with a Rota--Baxter cocycle
bialgebra. The displayed maps have the required restrictions to $A$.
Uniqueness holds among Rota--Baxter morphisms to the stated targets
by the free property of $\mathfrak F_\lambda(A)$.

(\ref{it:fl-b})
Freeness gives a unique Rota--Baxter morphism
$\widetilde f:\mathfrak F_\lambda(A)\to H$ extending $f$.
We first check the counit. The Rota--Baxter morphisms
$\varepsilon_H\widetilde f$ and $\varepsilon_{\mathfrak F}$
to $(\mathbf{k},0)$ both restrict to $\varepsilon_A$ on $A$,
so they are equal by freeness.
Thus $\widetilde f$ preserves the augmentation, and
Theorem \ref{thm:monoidal} makes
$\widetilde f\otimes\widetilde f$ an augmented Rota--Baxter
morphism. Consequently the two maps
$$
\Delta_H\widetilde f,\qquad
(\widetilde f\otimes\widetilde f)\Delta_{\mathfrak F}
:\mathfrak F_\lambda(A)\longrightarrow H\boxtimes H
$$
are Rota--Baxter morphisms. Their restrictions to $A$ are
$\Delta_Hf$ and $(f\otimes f)\Delta_A$, which agree since
$f$ is a bialgebra morphism. Hence the two maps agree everywhere.
Together with the counit equality already proved, this shows
that $\widetilde f$ is a cocycle bialgebra morphism.
Uniqueness also implies compatibility with identities and
composition, and the extension correspondence is natural.
\end{proof}

Recall that an element $x$ of a bialgebra $A$ is
\textbf{primitive} if $\Delta_A(x)=x\otimes1_A+1_A\otimes x$,
and an element $g$ is \textbf{group-like} if
$\Delta_A(g)=g\otimes g$ and $\varepsilon_A(g)=1$
\cite{K95}.
Writing $P$ for the free operator, we have
$$
\Delta_{\mathfrak F}(P(j_A(a)))
=P(j_A(a))\otimes1_{\mathfrak F_\lambda(A)}
 +\sum j_A(a_{(1)})\otimes P(j_A(a_{(2)})).
$$
In particular, a group-like element $g\in A$ gives
$\Delta_{\mathfrak F}(P(j_A(g)))
=P(j_A(g))\otimes1_{\mathfrak F_\lambda(A)}
 +j_A(g)\otimes P(j_A(g))$.

\subsection{The free commutative construction}\label{subsec:comm-free}

Let $\mathbf{CAugAlg}$ be the category of commutative augmented
algebras. For a commutative algebra $A$, denote its free
commutative Rota--Baxter algebra of the fixed weight $\lambda$
by $(RB(A),\diamond,P,j_A)$.
The mixable shuffle construction gives
$$
RB(A)=\bigoplus_{m\geq0}A\otimes A^{\otimes m}
$$
with $j_A(a)=a$ in the summand $m=0$ and
$P(a_0\otimes\cdots\otimes a_m)
=1_A\otimes a_0\otimes\cdots\otimes a_m$
\cite{GK00a}.
For completeness, the product is
$$
(a_0\otimes\alpha)\diamond(b_0\otimes\beta)
=a_0b_0\otimes(\alpha *_\lambda\beta),
$$
where $*_\lambda$ is the mixable shuffle product on the tensor
tails. The empty tensor is its unit, and its recursion is
\begin{align*}
(a\otimes\alpha)*_\lambda(b\otimes\beta)
={}&a\otimes\bigl(\alpha *_\lambda(b\otimes\beta)\bigr)
   +b\otimes\bigl((a\otimes\alpha)*_\lambda\beta\bigr)
+\lambda ab\otimes(\alpha *_\lambda\beta).
\end{align*}
Here $a,b\in A$ and $\alpha,\beta$ are tensor tails, possibly empty.
We use $\bar\otimes$ for the external tensor product between
two copies of $RB(A)$ when it is necessary to distinguish it
from the tensors in this direct sum.

If $A$ is augmented, its augmentation extends uniquely to
$\varepsilon_{\mathrm{RB}}:RB(A)\to(\mathbf{k},0)$.
Explicitly,
$$
\varepsilon_{\mathrm{RB}}(a_0\otimes\cdots\otimes a_m)
=
\begin{cases}
\varepsilon_A(a_0),&m=0,\\
0,&m\geq1.
\end{cases}
$$
Thus $RB:\mathbf{CAugAlg}\to\mathbf{CAlgAugRB}_\lambda$
is left adjoint to the commutative forgetful functor.
This is a separate adjunction from
$\mathfrak F_\lambda\dashv U$.
For commutative augmented algebras $A,B$, the map $j_A\otimes j_B$
extends uniquely to a Rota--Baxter morphism
$$
\delta^{\mathrm c}_{A,B}:RB(A\otimes B)\longrightarrow RB(A)\boxtimes RB(B).
$$
The target is commutative. The comparisons on generators in the
proof of Theorem \ref{thm:bimonad}(\ref{it:bm-a}) therefore apply
within the commutative categories and make $RB$ opmonoidal.

Let $\mathbf{CBialg}$ and $\mathbf{CRBCBialg}_\lambda$ denote the
categories of commutative bialgebras and commutative Rota--Baxter
cocycle bialgebras, respectively.
The following coproduct is the specialization $\kappa=\mu=0$
of \cite[Lemma~4.3 and Theorem~4.7]{ZGQ24}.
Here it is obtained from the opmonoidal functor, which also
gives the stated adjunction.

\begin{coro}\label{cor:free-commutative}
For every commutative bialgebra $A$, there is a unique augmented
Rota--Baxter morphism
$$
\Delta_{\mathrm{RB}}:RB(A)\longrightarrow RB(A)\boxtimes RB(A)
$$
satisfying $\Delta_{\mathrm{RB}}j_A=(j_A\bar\otimes j_A)\Delta_A$.
It makes $RB(A)$ a commutative Rota--Baxter cocycle bialgebra with
counit $\varepsilon_{\mathrm{RB}}$.
This construction gives a functor
$RB:\mathbf{CBialg}\to\mathbf{CRBCBialg}_\lambda$
left adjoint to the forgetful functor.
\end{coro}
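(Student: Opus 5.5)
The plan is to repeat the proof of Theorem~\ref{thm:free-lifting} with $\mathfrak F_\lambda$ replaced by $RB$ and every category replaced by its commutative full subcategory. The input facts are two. First, $\mathbf{CAlgAugRB}_\lambda$ is closed under $\boxtimes$. Second, by the remarks preceding the statement, $RB\dashv$ (forgetful) is an adjunction between $\mathbf{CAugAlg}$ and $\mathbf{CAlgAugRB}_\lambda$, and $RB$ is opmonoidal with structure maps $\delta^{\mathrm c}_{A,B}$ and $\delta^{\mathrm c}_0:RB(\mathbf{k})\to\mathbf 1$, the latter extending $\mathrm{id}_{\mathbf{k}}$.

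A commutative bialgebra $A$ is a comonoid in $\mathbf{CAugAlg}$: $\Delta_A$ is an algebra morphism to the commutative algebra $A\otimes A$, and $\varepsilon_A$ is its augmentation. The opmonoidal functor $RB$ sends this comonoid to a comonoid in $\mathbf{CAlgAugRB}_\lambda$ with
$$
\Delta_{\mathrm{RB}}=\delta^{\mathrm c}_{A,A}RB(\Delta_A),\qquad
\varepsilon=\delta^{\mathrm c}_0RB(\varepsilon_A).
$$
This is the argument of Theorem~\ref{thm:bimonad}(\ref{it:bm-c}), now applied inside the commutative categories. On generators, $\Delta_{\mathrm{RB}}j_A=(j_A\bar\otimes j_A)\Delta_A$. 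The counit restricts to $\varepsilon_A$ on $A$, so by uniqueness of Rota--Baxter extensions to $(\mathbf{k},0)$ it equals $\varepsilon_{\mathrm{RB}}$.

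Theorem~\ref{thm:comonoid}, together with the remark following it on commutative comonoids, then identifies this comonoid with a commutative Rota--Baxter cocycle bialgebra. Uniqueness of $\Delta_{\mathrm{RB}}$ among augmented Rota--Baxter morphisms with the prescribed restriction follows from freeness of $RB(A)$ among commutative Rota--Baxter algebras, since the target $RB(A)\boxtimes RB(A)$ is commutative. This is the one point requiring care: we must use the commutative adjunction, not $\mathfrak F_\lambda$, and so we must check that the target lies in the commutative category.

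For the adjunction, let $f:A\to H$ be a bialgebra morphism into a commutative Rota--Baxter cocycle bialgebra, and let $\widetilde f:RB(A)\to H$ be its unique Rota--Baxter extension. As in Theorem~\ref{thm:free-lifting}(\ref{it:fl-b}), $\varepsilon_H\widetilde f$ and $\varepsilon_{\mathrm{RB}}$ agree on $A$, so they coincide. Hence $\widetilde f$ is augmented, and Theorem~\ref{thm:monoidal} makes $\widetilde f\otimes\widetilde f$ a morphism. The maps $\Delta_H\widetilde f$ and $(\widetilde f\otimes\widetilde f)\Delta_{\mathrm{RB}}$ are then Rota--Baxter morphisms into the commutative object $H\boxtimes H$ that agree on $A$, so they are equal. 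Uniqueness of extensions yields functoriality, naturality, and therefore the adjunction. There is no real obstacle beyond confirming that each freeness appeal happens inside the commutative categories, which is guaranteed because $\boxtimes$ preserves commutativity.
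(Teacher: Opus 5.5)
Your proposal is correct and follows essentially the same route as the paper. You apply the opmonoidal commutative free functor $RB$ to the comonoid $A$ in $\mathbf{CAugAlg}$, identify the resulting comonoid via Theorem~\ref{thm:comonoid}, and obtain uniqueness and the adjunction by rerunning the generator comparison of Theorem~\ref{thm:free-lifting}(\ref{it:fl-b}) inside the commutative categories, checking that every freeness appeal has a commutative target. That check, guaranteed by closure of $\mathbf{CAlgAugRB}_\lambda$ under $\boxtimes$, is exactly the point the paper's proof singles out.
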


\begin{proof}
The opmonoidal functor $RB$ sends the comonoid $A$ to a comonoid
in $\mathbf{CAlgAugRB}_\lambda$.
Theorem \ref{thm:comonoid} identifies its coproduct
$\delta^{\mathrm c}_{A,A}RB(\Delta_A)$ with the required
cocycle coproduct. Uniqueness follows from the free commutative
property. For a bialgebra morphism $f:A\to H$ with commutative
target, the free extension $\widetilde f:RB(A)\to H$ preserves
the coproduct and counit by the comparison on generators in
Theorem \ref{thm:free-lifting}(\ref{it:fl-b}).
All target algebras in that comparison are commutative, so
the argument uses exactly the present free property.
\end{proof}

\subsection{Connected filtered Hopf structures}\label{subsec:filtration}

We construct a connected filtration on each of the free cocycle
bialgebras above by a recursion involving only the generating
map, multiplication, and $P$. We then verify the coproduct
condition by induction on the filtration index.

We use the following convention. A \textbf{connected filtered bialgebra}
is a bialgebra $A$ with an increasing exhaustive filtration
$(F_nA)_{n\geq0}$ such that
$F_0A=\mathbf{k}1_A$ and
\begin{equation}\label{eq:filtered-bialgebra}
F_pA\,F_qA\subseteq F_{p+q}A,\qquad
\Delta_A(F_nA)\subseteq\sum_{p+q\leq n}F_pA\otimes F_qA.
\end{equation}
Products of submodules mean the $\mathbf{k}$-spans of products
of their elements. Over a general base ring, tensor products
of filtered submodules in these inclusions mean their images
in the ambient tensor product.

This convention is compatible with the connected cofiltered
coalgebras of \cite[Definition~2.8]{GG19}. Indeed, since the filtration
is increasing, the coproduct condition in \eqref{eq:filtered-bialgebra}
is equivalent to
$\Delta_A(F_nA)\subseteq\sum_{p+q=n}F_pA\otimes F_qA$.
Moreover, because $\varepsilon_A(1_A)=1$, each filtered piece splits as
\begin{equation}\label{eq:filtered-splitting}
F_nA=\mathbf{k}1_A\oplus(F_nA\cap\ker\varepsilon_A).
\end{equation}
Thus the underlying coaugmented coalgebra is connected and cofiltered
in the sense of \cite{GG19}. The multiplicative condition in
\eqref{eq:filtered-bialgebra} is additional and is used below to
construct the filtration on the free Rota--Baxter algebra.

\begin{theorem}\label{thm:filtered-hopf}
Let $A$ be a connected filtered bialgebra.
Let $H=\mathfrak F_\lambda(A)$, or let $H=RB(A)$ when $A$
is commutative, with the cocycle bialgebra structures constructed
above.
\begin{enumerate}
\item\label{it:fh-a}
Define submodules of $H$ recursively by
\begin{align*}
G_0H&=\mathbf{k}1_H,\\
G_nH&=G_{n-1}H+j_A(F_nA)+P(G_{n-1}H)
       +\sum_{i=1}^{n-1}G_iH\,G_{n-i}H\qquad(n\geq1).
\end{align*}
The sum is zero when $n=1$.
Then $(G_nH)_{n\geq0}$ is a connected bialgebra filtration,
with
$$
j_A(F_nA)\subseteq G_nH,\qquad
P(G_nH)\subseteq G_{n+1}H.
$$
Consequently $H$ is a Rota--Baxter cocycle Hopf algebra.
The construction is functorial for filtered bialgebra morphisms.
\item\label{it:fh-b}
For $H=RB(A)$, this filtration has the explicit form
$$
G_NRB(A)=
\sum_{\substack{m\geq0,\ n_0,\ldots,n_m\geq0\\
n_0+\cdots+n_m+m\leq N}}
\operatorname{Im}\bigl(
F_{n_0}A\otimes\cdots\otimes F_{n_m}A
\longrightarrow A^{\otimes(m+1)}\bigr).
$$
Each image is viewed in the corresponding direct summand of $RB(A)$.
\end{enumerate}
\end{theorem}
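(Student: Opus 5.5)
We first treat the structural properties of the recursive filtration in part (\ref{it:fh-a}) that do not involve the coproduct. By construction $G_{n-1}H\subseteq G_nH$, $j_A(F_nA)\subseteq G_nH$ and $P(G_nH)\subseteq G_{n+1}H$. For multiplicativity $G_pH\,G_qH\subseteq G_{p+q}H$, we use induction on $p+q$. The case $p=0$ or $q=0$ is trivial since $G_0H=\mathbf{k}1_H$. For $p,q\geq1$ the product appears literally in the sum defining $G_{p+q}H$.

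Exhaustiveness follows because $H$ is generated as a Rota--Baxter algebra by $j_A(A)=\bigcup_n j_A(F_nA)$. Indeed, $\bigcup_nG_nH$ is a unital subalgebra, stable under $P$ and containing $j_A(A)$. For $\mathfrak F_\lambda(A)$ it therefore equals $H$, because the Rota--Baxter subalgebra generated by $j_A(A)$ is everything by the universal property (a standard image argument). For $RB(A)$ the same holds by the explicit description, since $a_0\otimes\cdots\otimes a_m=j_A(a_0)\diamond P(a_1\otimes\cdots)$ recursively.

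Connectedness, $G_0H=\mathbf{k}1_H$, holds by definition. We also note that $j_A(F_0A)=\mathbf{k}1_H$, so $G_1H=\mathbf{k}1_H+j_A(F_1A)+\mathbf{k}P(1_H)$.

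The main step is the coproduct condition $\Delta(G_nH)\subseteq\sum_{p+q\leq n}G_pH\otimes G_qH$, which we prove by induction on $n$. Write $\Phi_n$ for the right side. We check each generating piece of $G_nH$ separately.
\begin{itemize}
\item On $j_A(F_nA)$ we use $\Delta j_A=(j_A\otimes j_A)\Delta_A$ together with the filtered bialgebra hypothesis on $A$.
\item On products $G_iH\,G_{n-i}H$ we use that $\Delta$ is an algebra map, the induction hypothesis, and multiplicativity, which gives $\Phi_i\Phi_{n-i}\subseteq\Phi_n$.
\item On $P(G_{n-1}H)$ we use the cocycle identity $\Delta P(x)=P(x)\otimes1_H+(\mathrm{id}\otimes P)\Delta(x)$. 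The first term lies in $G_nH\otimes G_0H$. For the second, the induction hypothesis gives $\Delta(x)\in\sum_{p+q\leq n-1}G_pH\otimes G_qH$, and applying $\mathrm{id}\otimes P$ lands in $\sum G_pH\otimes G_{q+1}H$ with $p+q+1\leq n$.
\end{itemize}
Over a general ring all tensors are read as images in $H\otimes H$, and these inclusions still hold.

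Given this filtration, the standard Takeuchi-type argument for connected filtered bialgebras produces the antipode: one solves $S*\mathrm{id}=u\varepsilon$ recursively on $\ker\varepsilon\cap G_nH$ using the splitting \eqref{eq:filtered-splitting}, as in \cite{GG19}. Hence $H$ is a Rota--Baxter cocycle Hopf algebra.

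Functoriality works as follows. A filtered bialgebra morphism $f:A\to A'$ induces $\mathfrak F_\lambda(f)$, which commutes with $j$, with products and with $P$. By induction on $n$ it maps $G_nH$ into $G_nH'$.

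For part (\ref{it:fh-b}), let $G'_N$ denote the displayed explicit sum. We prove $G_N=G'_N$ by two inclusions.

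For $G'_N\subseteq G_N$, we induct on $m$ using
\[
a_0\otimes\cdots\otimes a_m=j_A(a_0)\diamond P(a_1\otimes\cdots\otimes a_m).
\]
If $a_i\in F_{n_i}A$, then $a_1\otimes\cdots\otimes a_m\in G_{n_1+\cdots+n_m+m-1}$, applying $P$ raises the index by one, and the product with $j_A(a_0)\in G_{n_0}$ lands in $G_{n_0+\cdots+n_m+m}$. When some index is zero we need to take care, but $G$ is increasing and $j_A(F_0A)=\mathbf{k}1$, so these cases are harmless.

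For $G_N\subseteq G'_N$, we induct on $N$ and check that $G'$ is increasing, contains $j_A(F_NA)$, and satisfies $P(G'_{N-1})\subseteq G'_N$, which is immediate since $P$ prepends $1_A\in F_0A$ and raises $m$ by one. The remaining condition is $G'_iG'_{N-i}\subseteq G'_N$. Here the mixable shuffle recursion must be analyzed: each term of $(a\otimes\alpha)*_\lambda(b\otimes\beta)$ either preserves the total filtration degree plus length, or, in the $\lambda ab$ merging term, lowers the length by one while $ab\in F_{p+q}A$. An induction on total length then shows the product stays in $G'_N$.

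I expect this last product estimate in part (\ref{it:fh-b}) to be the main obstacle, because it requires tracking the merged terms through the shuffle recursion. The coproduct induction in part (\ref{it:fh-a}) is comparatively mechanical once the cocycle identity is used.
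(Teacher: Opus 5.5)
Your proposal is correct and follows essentially the same route as the paper. It uses the same summand-by-summand induction for the coproduct estimate (with the cocycle identity handling $P(G_{n-1}H)$), the same appeal to \cite{GG19} for the antipode, and the same two-inclusion proof of the explicit $RB(A)$ filtration via $a_0\otimes\cdots\otimes a_m=j_A(a_0)\diamond P(\cdots)$ and the weight count through the mixable shuffle recursion. The only slips are cosmetic: the summand $G_{n-1}H$ in the coproduct induction, which you omit, follows directly from the induction hypothesis; and the splitting you need for the antipode is $G_nH=\mathbf{k}1_H\oplus(G_nH\cap\ker\varepsilon_H)$ rather than the one for $F_nA$.
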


\begin{proof}
(\ref{it:fh-a})
Every index of $G$ on the right-hand side of the recursion is
less than $n$, so the submodules are well defined.
They are increasing, contain $j_A(F_nA)$ at level $n$, and
satisfy $P(G_nH)\subseteq G_{n+1}H$.
For $p,q\geq1$, the product $G_pH\,G_qH$ is a summand in
the definition of $G_{p+q}H$; if either index is zero, the
same inclusion follows from $G_0H=\mathbf{k}1_H$.
Thus the filtration is multiplicative.
Its union is a Rota--Baxter subalgebra of $H$ containing
$j_A(A)$, since the filtration of $A$ is exhaustive.
As $j_A(A)$ generates $H$ as a Rota--Baxter algebra, this union
is all of $H$.

We prove the coproduct estimate
$$
\Delta_H(G_nH)\subseteq\sum_{p+q\leq n}G_pH\otimes G_qH
$$
by induction on $n$. For $n=0$, it follows from
$\Delta_H(1_H)=1_H\otimes1_H$.
Suppose $n\geq1$ and the estimate holds at all smaller indices.
It holds on the summand $G_{n-1}H$ by induction.
On the generator summand, we have
$$
\Delta_H(j_A(F_nA))
=(j_A\otimes j_A)\Delta_A(F_nA)
\subseteq\sum_{p+q\leq n}G_pH\otimes G_qH.
$$
For $x\in G_{n-1}H$, the cocycle identity and induction give
\begin{align*}
\Delta_H(P(x))
&=P(x)\otimes1_H+(\mathrm{id}_H\otimes P)\Delta_H(x)\\
&\in G_nH\otimes G_0H
 +\sum_{p+q\leq n-1}G_pH\otimes G_{q+1}H\\
&\subseteq\sum_{p+q\leq n}G_pH\otimes G_qH.
\end{align*}
Finally, for $1\leq i\leq n-1$, multiplicativity of $\Delta_H$
and the induction hypothesis yield
\begin{align*}
\Delta_H(G_iH\,G_{n-i}H)
&\subseteq
\left(\sum_{p+q\leq i}G_pH\otimes G_qH\right)
\left(\sum_{r+s\leq n-i}G_rH\otimes G_sH\right)\\
&\subseteq\sum_{p+q+r+s\leq n}G_{p+r}H\otimes G_{q+s}H
 \subseteq\sum_{u+v\leq n}G_uH\otimes G_vH.
\end{align*}
This covers every summand in the recursion and completes the
induction. Since $G_0H=\mathbf{k}1_H$, the filtration is connected.
The counit gives, for every $n$,
\begin{equation}\label{eq:G-splitting}
G_nH=\mathbf{k}1_H\oplus(G_nH\cap\ker\varepsilon_H),
\end{equation}
and, since $(G_nH)$ is increasing, the displayed coproduct estimate
is equivalent to
$$
\Delta_H(G_nH)\subseteq\sum_{p+q=n}G_pH\otimes G_qH.
$$
Hence $(H,\Delta_H,\varepsilon_H,u_H)$ is a connected coaugmented
cofiltered coalgebra in the precise sense of
\cite[Definition~2.8]{GG19}. Thus the hypotheses of \cite[Theorem~3.4]{GG19} are satisfied, and that theorem
gives an antipode over $\mathbf{k}$. In the commutative case, the
filtration in (\ref{it:fh-b}) is the one used in
\cite[Theorem~4.11]{ZGQ24}.

For completeness, the recursive antipode formula follows from the
same cofiltration. If $x\in G_nH\cap\ker\varepsilon_H$, then
\cite[Lemma~2.10 and Remark~3.5]{GG19} give a reduced coproduct
presentation
$$
\Delta_H(x)-x\otimes1_H-1_H\otimes x=\sum x'\otimes x'',
$$
with $x',x''\in G_{n-1}H\cap\ker\varepsilon_H$ (for $n\geq1$). Hence the recursion
strictly lowers the filtration level. Consequently
$$
S(1_H)=1_H,\qquad
S(x)=-x-\sum S(x')x''=-x-\sum x'S(x'')
$$
computes $S$ recursively on the filtration.

For a filtered bialgebra morphism $f:A\to B$, its free
extension preserves products and $P$, and sends $j_A(F_nA)$
into $j_B(F_nB)$. Induction in the defining recursion therefore
shows that this extension preserves every filtration level.
It is a bialgebra morphism by Theorem
\ref{thm:free-lifting}(\ref{it:fl-b}) or Corollary
\ref{cor:free-commutative}, and consequently commutes with
antipodes.

(\ref{it:fh-b})
For $a_i\in F_{n_i}A$, the pure tensor in the summand
$A^{\otimes(m+1)}$ satisfies
$$
a_0\otimes\cdots\otimes a_m
=j_A(a_0)\diamond P\bigl(j_A(a_1)\diamond
P\bigl(\cdots P(j_A(a_m))\cdots\bigr)\bigr).
$$
For $m=0$, this means $j_A(a_0)$.
Applying the multiplication and $P$ inclusions in
(\ref{it:fh-a}) to this expression places the tensor in
$G_{n_0+\cdots+n_m+m}RB(A)$.
Hence the right-hand side of the claimed formula is contained
in $G_NRB(A)$.

Conversely, the submodules on the right-hand side form an
increasing filtration starting at $\mathbf{k}1_H$ and
containing $j_A(F_NA)$ at level $N$.
The operator $P$ prepends $1_A\in F_0A$ to a tensor, so it
raises the filtration index by one.
In a mixable shuffle product, a shuffle adds the two tail
lengths, while each merge reduces the resulting tail length
by one and multiplies two entries in $A$.
The filtration index of a product of entries is at most the
sum of their original indices. Thus the sum of the entry
indices plus the tail length is at most the sum of the two
input bounds. This proves multiplicativity of the displayed
filtration.
Induction in the recursion of (\ref{it:fh-a}) now gives the
reverse inclusion, proving the formula.
\end{proof}

\subsection{Iterates and examples}\label{subsec:examples}

We first recall the classical multiplication formulas for iterates
of the unit.  Put
$$
a_n:=P^n(1_H),\qquad p:=a_1=P(1_H).
$$
In every Rota--Baxter algebra of weight $\lambda$, the Rota--Baxter
identity gives the $\lambda$-divided-power recurrence
\begin{equation}\label{eq:lambda-divided-recurrence}
p\,a_n=(n+1)a_{n+1}+n\lambda a_n\qquad(n\geq0).
\end{equation}
This is the multiplication underlying the $\lambda$-divided power
algebra of \cite{AGKO03}.  Its connection with Stirling numbers was
made explicit in \cite[Theorem~2.1]{Guo05}: if $s(n,k)$ and $S(n,k)$
denote the Stirling numbers of the first and second kinds, respectively,
then for $n\geq1$,
\begin{align}
 p^n&=\sum_{k=1}^n k!\,S(n,k)\lambda^{n-k}a_k,
 \label{eq:stirling-second}\\
 n!\,a_n&=\sum_{k=1}^n s(n,k)\lambda^{n-k}p^k
 =\prod_{j=0}^{n-1}(p-j\lambda1_H).
 \label{eq:stirling-first}
\end{align}
These formulas use only the Rota--Baxter identity and therefore
remain valid in the present, not necessarily commutative, setting.
What is new here is the coalgebraic behavior forced by the cocycle
identity and by $\varepsilon P=0$: the same iterates form a divided-power
coalgebra sequence, and in the Hopf case their antipodes admit an
explicit closed formula.

\begin{prop}\label{thm:unit-iterates}
Let $(H,P,\Delta,\varepsilon)$ be a Rota--Baxter cocycle bialgebra
of weight $\lambda$, let $P^0=\mathrm{id}_H$, and put
$a_n=P^n(1_H)$.
\begin{enumerate}
\item\label{it:ui-a}
For every $n\geq0$,
$$
\Delta(a_n)=\sum_{i=0}^{n}a_{n-i}\otimes a_i.
$$
In particular, $a_1=P(1_H)$ is primitive.
\item\label{it:ui-b}
If $\mathbf{k}$ is a field of characteristic zero and
$P(1_H)\neq0$, then the sub-bialgebra $\mathbf{k}[P(1_H)]$ is a
polynomial Hopf algebra with basis $(a_n)_{n\geq0}$, and its antipode is
$$
S(a_n)=\frac{(-1)^n}{n!}
\prod_{j=0}^{n-1}\bigl(P(1_H)+j\lambda1_H\bigr).
$$
If $P(1_H)=0$, then $a_n=0$ for all $n\geq1$.
\end{enumerate}
\end{prop}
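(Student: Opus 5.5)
Part (a) goes by induction on $n$, using the cocycle identity $\Delta P=P\otimes1_H+(\mathrm{id}_H\otimes P)\Delta$ from Definition~\ref{def:cb}. For $n=0$ we have $\Delta(1_H)=1_H\otimes1_H$. Assume $\Delta(a_n)=\sum_{i=0}^n a_{n-i}\otimes a_i$. Then
\begin{align*}
\Delta(a_{n+1})=\Delta(P(a_n))&=a_{n+1}\otimes1_H+\sum_{i=0}^n a_{n-i}\otimes a_{i+1}\\
&=\sum_{i=0}^{n+1}a_{n+1-i}\otimes a_i ,
\end{align*}
where the extra term $a_{n+1}\otimes a_0$ is exactly $P(a_n)\otimes1_H$. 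Taking $n=1$ shows that $a_1$ is primitive. We also record $\varepsilon(a_n)=0$ for $n\ge1$, which holds because $\varepsilon P=0$.

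For (b), first note that if $p:=P(1_H)=0$, then $a_n=P^{n-1}(p)=0$ for $n\ge1$. Now assume $p\neq0$ and $\mathrm{char}\,\mathbf{k}=0$.

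By \eqref{eq:stirling-first}, each $a_n$ lies in $\mathbf{k}[p]$. It is a monic polynomial of degree $n$ in $p$, up to the invertible factor $1/n!$. Conversely, by \eqref{eq:stirling-second}, each $p^n$ lies in the span of the $a_k$. Hence $\mathbf{k}[p]=\operatorname{span}(a_n)$, and by part (a) this span is closed under $\Delta$. It is also closed under multiplication and contains the unit, so it is a sub-bialgebra.

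The main obstacle is proving that $p$ is transcendental, i.e.\ that the $a_n$ are linearly independent. I would argue via the primitive element $p$ in characteristic zero. Suppose $f(p)=0$ for a nonzero polynomial $f$ of minimal degree $d\ge1$. Apply $\Delta$: primitivity gives
\[
\sum_k \frac{f^{(k)}(p)}{k!}\otimes p^k=0 .
\]
Project onto the tensor factor coefficient using the linear independence of $1,p,\dots,p^{d-1}$, which holds by minimality of $d$. The $k=0$ and $k=1$ terms then force $f'(p)=0$. But $f'$ is nonzero of degree $d-1$ in characteristic zero, so $d-1\ge1$ contradicts minimality, while $d=1$ would give $f'$ a nonzero constant. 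Either way we reach a contradiction, with one caveat. When $d=1$, minimality gives no independence to project against, so that case must be handled directly: $f(p)=0$ with $\deg f=1$ means $p=c1_H$, and then $\varepsilon(p)=0$ forces $c=0$, contradicting $p\neq0$. With transcendence established, $(a_n)$ is a basis and $\mathbf{k}[p]$ is a polynomial Hopf algebra; the antipode is $S(p)=-p$, extended multiplicatively.

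For the antipode formula, $S$ is an algebra morphism on the commutative algebra $\mathbf{k}[p]$ with $S(p)=-p$. Applying $S$ to \eqref{eq:stirling-first} gives
\[
n!\,S(a_n)=\prod_{j=0}^{n-1}(-p-j\lambda)=(-1)^n\prod_{j=0}^{n-1}(p+j\lambda1_H),
\]
and dividing by $n!$ yields the stated formula.
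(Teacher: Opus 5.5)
Your proposal follows the paper's proof closely: induction via the cocycle identity for (a), the triangular formula \eqref{eq:stirling-first} for the basis, transcendence of the primitive element $p$ via a minimal polynomial with the counit handling $d=1$, and applying $S$ to \eqref{eq:stirling-first}. However, the coefficient extraction in your transcendence step is incorrect as written.

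Write $f=\sum_j c_jt^j$ with leading coefficient $c_d$. In $\sum_{k=0}^{d}\frac{f^{(k)}(p)}{k!}\otimes p^k=0$, the top term is $c_d1_H\otimes p^d$, and $p^d$ is not among the independent elements $1_H,p,\dots,p^{d-1}$. Before you can read off coefficients, you must substitute $c_dp^d=-\sum_{j<d}c_jp^j$. This substitution contributes $-c_1\,1_H\otimes p$. So the $\otimes p$ component forces $f'(p)=c_1 1_H=f'(0)1_H$, not $f'(p)=0$. For instance, with $d=2$ and $f=t^2+c_1t$, the whole expansion collapses to $2p\otimes p$. Its $\otimes p$ coefficient is $2p$, not $f'(p)=2p+c_11_H$.

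The repair is immediate. For $d\geq2$, the polynomial $f'-f'(0)$ is nonzero of degree $d-1\geq1$, since its leading coefficient $dc_d$ is nonzero in characteristic zero. It annihilates $p$, which contradicts minimality. The case $d=1$ is handled by the counit, as you do.

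Alternatively, argue as the paper does. Subtract $f(p)\otimes1_H+1_H\otimes f(p)=0$ and work with the reduced coproduct $\sum_j c_j\sum_{0<i<j}\binom{j}{i}p^i\otimes p^{j-i}$. This involves only powers at most $d-1$ in each factor, so the coefficient of $p^{d-1}\otimes p$ can be read off directly and equals $dc_d\neq0$. The rest of your proposal is fine.
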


\begin{proof}
(\ref{it:ui-a}) The formula holds for $n=0$.
If it holds at $n$, then the cocycle identity gives
\begin{align*}
\Delta(a_{n+1})
&=a_{n+1}\otimes1_H
 +(\mathrm{id}_H\otimes P)\sum_{i=0}^{n}a_{n-i}\otimes a_i\\
&=\sum_{i=0}^{n+1}a_{n+1-i}\otimes a_i.
\end{align*}
The case $n=1$ says that $a_1=P(1_H)$ is primitive.

(\ref{it:ui-b}) Suppose first that $p=P(1_H)\neq0$.
By Eq.~\eqref{eq:stirling-first}, each $a_n$ is a degree-$n$
polynomial in $p$ with leading coefficient $1/n!$.
It remains to show that $p$ is transcendental over $\mathbf{k}$.
Suppose that $p$ is algebraic, and choose a monic polynomial
$f$ of least degree $d$ with $f(p)=0$.
Applying the counit gives $f(0)=0$. Thus $d=1$ would imply
$p=0$, a contradiction.
If $d>1$, the elements $1_H,p,\ldots,p^{d-1}$ are linearly independent.
Since $p$ is primitive, expanding
$$
0=\Delta(f(p))-f(p)\otimes1_H-1_H\otimes f(p)
$$
in the tensor products of these independent elements, the
coefficient of $p^{d-1}\otimes p$ is $d$.
Terms of lower degree in $f$ cannot contribute to this coefficient,
contradicting characteristic zero. Hence $\mathbf{k}[p]$ is a
polynomial algebra. It is a sub-bialgebra because $p$ is primitive.  The polynomial bialgebra
$\mathbf{k}[p]$ has the canonical antipode determined by $S(p)=-p$; if the
ambient bialgebra $H$ is itself Hopf, this antipode is the restriction of the
antipode of $H$. Equation~\eqref{eq:stirling-first} is triangular with
leading coefficient $1/n!$, so $(a_n)_{n\geq0}$ is a basis of
$\mathbf{k}[p]$. Applying $S$ to Eq.~\eqref{eq:stirling-first}
gives the stated antipode formula.

If $p=0$, then Eq.~\eqref{eq:lambda-divided-recurrence}, starting
at $n=0$, or simply linearity of $P$, gives $a_n=0$ for all $n\geq1$.
\end{proof}

The comparison with the earlier $\lambda$-divided power Hopf algebra
is worth making explicit.  The multiplication formulas
\eqref{eq:lambda-divided-recurrence}--\eqref{eq:stirling-first}
agree with the $\lambda$-divided power algebra studied in
\cite{AGKO03,Guo05}, but the coalgebra structures differ when
$\lambda\neq0$. For the standard basis $(a_n)$ of the Hopf algebra
in \cite{AGKO03}, one has
$\varepsilon(a_1)=\lambda$ and
$$
\Delta(a_1)=a_1\otimes a_0+a_0\otimes a_1
             -\lambda a_0\otimes a_0.
$$
In the present cocycle bialgebra, by contrast,
$\varepsilon(P(1_H))=0$ and $P(1_H)$ is primitive. Thus the classical
Stirling-number multiplication identities coexist here with a
different, cocycle-determined coproduct, which is the source of the
new Hopf formulas above.

\begin{coro}\label{coro:element-iterates}
Let $(H,P,\Delta,\varepsilon)$ be a Rota--Baxter cocycle bialgebra.
\begin{enumerate}
\item\label{it:ei-a}
If $x\in H$ is primitive, then, for $n\geq0$,
$$
\Delta(P^n(x))
=\sum_{i=0}^n P^{n-i}(x)\otimes P^i(1_H)+1_H\otimes P^n(x).
$$
\item\label{it:ei-b}
If $g\in H$ is group-like, then, for $n\geq1$,
$$
\Delta(P^n(g))
=\sum_{i=0}^{n-1}P^{n-i}(g)\otimes P^i(1_H)+g\otimes P^n(g).
$$
\end{enumerate}
\end{coro}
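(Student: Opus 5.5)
Both parts go by induction on $n$, using the cocycle identity $\Delta P=P\otimes1_H+(\mathrm{id}_H\otimes P)\Delta$ from Definition \ref{def:cb}(\ref{it:cb-b}) and the formula $\Delta(P^i(1_H))=\sum_{k=0}^{i}P^{i-k}(1_H)\otimes P^k(1_H)$ from Proposition \ref{thm:unit-iterates}(\ref{it:ui-a}).

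For (\ref{it:ei-a}), start at $n=0$. There the claimed formula reads $\Delta(x)=x\otimes1_H+1_H\otimes x$, which is primitivity. For the inductive step, assume the formula at $n$ and apply the cocycle identity to $P^{n+1}(x)=P(P^n(x))$. This gives
\[
\Delta(P^{n+1}(x))=P^{n+1}(x)\otimes1_H+\sum_{i=0}^{n}P^{n-i}(x)\otimes P^{i+1}(1_H)+1_H\otimes P^{n+1}(x).
\]
Reindex by $i\mapsto i+1$ in the middle sum. The first term is then exactly the $i=0$ summand $P^{n+1}(x)\otimes P^0(1_H)$, and the sum runs over $i=1,\dots,n+1$. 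Together this gives the formula at $n+1$.

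For (\ref{it:ei-b}), the base case is $n=1$. By the cocycle identity and $\Delta(g)=g\otimes g$,
\[
\Delta(P(g))=P(g)\otimes1_H+g\otimes P(g),
\]
which is the claimed formula with sum index $i=0$ only. The inductive step is the same computation as in (\ref{it:ei-a}). Applying $\mathrm{id}_H\otimes P$ shifts each $P^i(1_H)$ to $P^{i+1}(1_H)$ and turns $g\otimes P^n(g)$ into $g\otimes P^{n+1}(g)$. The new term $P^{n+1}(g)\otimes1_H$ fills the $i=0$ slot.

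Neither step should present a real obstacle. The one point needing care is the index bookkeeping, namely checking that $P\otimes1_H$ supplies precisely the missing $i=0$ term after the shift. A second point is to treat the lone term $1_H\otimes P^n(x)$, or $g\otimes P^n(g)$, separately from the sum, since $P(1_H)$ does not vanish in general.
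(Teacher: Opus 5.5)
Your proposal is correct and is the paper's proof: induction on $n$ from the base cases $n=0$ (primitivity) and $n=1$ (cocycle identity with $\Delta(g)=g\otimes g$), where $\mathrm{id}_H\otimes P$ raises each $P^i(1_H)$ to $P^{i+1}(1_H)$ and the term $P^{n+1}(\cdot)\otimes1_H$ fills the $i=0$ slot. The appeal to Proposition~\ref{thm:unit-iterates}(\ref{it:ui-a}) is unnecessary, because the inductive step only applies $\mathrm{id}_H\otimes P$ to the right tensor factor and never uses $\Delta(P^i(1_H))$.
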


\begin{proof}
(\ref{it:ei-a}) The case $n=0$ is the primitive identity.
If the formula holds at $n$, then
\begin{align*}
\Delta(P^{n+1}(x))
&=P^{n+1}(x)\otimes1_H+(\mathrm{id}_H\otimes P)\Delta(P^n(x))\\
&=P^{n+1}(x)\otimes1_H
 +\sum_{i=0}^n P^{n-i}(x)\otimes P^{i+1}(1_H)
 +1_H\otimes P^{n+1}(x),
\end{align*}
which is the formula at $n+1$.

(\ref{it:ei-b}) The case $n=1$ follows from the cocycle identity
and $\Delta(g)=g\otimes g$.
For the induction step, the same identity gives
\begin{align*}
\Delta(P^{n+1}(g))
&=P^{n+1}(g)\otimes1_H+(\mathrm{id}_H\otimes P)\Delta(P^n(g))\\
&=P^{n+1}(g)\otimes1_H
 +\sum_{i=0}^{n-1}P^{n-i}(g)\otimes P^{i+1}(1_H)
 +g\otimes P^{n+1}(g).
\end{align*}
This proves the formula at $n+1$.
\end{proof}

\begin{exam}\label{ex:noncomm}
Let $\mathbf{k}$ be a field and let
$A=\mathbf{k}\langle x,y\rangle$ be the free associative algebra
with primitive generators $x,y$ and augmentation
$\varepsilon_A(x)=\varepsilon_A(y)=0$.
The filtration by words of length at most $n$ is a connected
bialgebra filtration.
By Theorems \ref{thm:free-lifting} and \ref{thm:filtered-hopf},
$\mathfrak F_\lambda(A)$ is a cocycle Hopf algebra.
We identify $A$ with its image under $j_A$, write $P$ for the
free operator, and write $1$ for the unit of this free algebra.
Since
$\Delta_A(xy)=xy\otimes1+x\otimes y+y\otimes x+1\otimes xy$,
we obtain
\begin{align*}
\Delta_{\mathfrak F}(P(xy))
={}&P(xy)\otimes1+xy\otimes P(1)
 +x\otimes P(y)+y\otimes P(x)+1\otimes P(xy).
\end{align*}
The antipode recursion, together with
$S(x)=-x$, $S(y)=-y$ and $S(xy)=yx$, gives
\begin{align*}
S(P(xy))
&=-P(xy)-S(xy)P(1)-S(x)P(y)-S(y)P(x)\\
&=-P(xy)-yxP(1)+xP(y)+yP(x).
\end{align*}
The order $yx$ in the second term is forced by the antipode.
Also, $xy-yx$ is primitive, so
$$
\Delta_{\mathfrak F}(P(xy-yx))
=P(xy-yx)\otimes1+(xy-yx)\otimes P(1)+1\otimes P(xy-yx).
$$
These calculations use the given noncommutative multiplication
and the prescribed primitive coproduct on the input algebra.
\end{exam}

\begin{exam}\label{ex:comm}
Let $A=\mathbf{k}[x]$ with primitive generator $x$ and
$\varepsilon_A(x)=0$, where $\mathbf{k}$ is a field.
In $RB(A)$, identify $x$ with $j_A(x)$ and write $1$ for
the unit of $RB(A)$. The free operator satisfies
$P(1)=1_A\otimes1_A$,
$P(x)=1_A\otimes x$, and
$P^2(1)=1_A\otimes1_A\otimes1_A$.
The cocycle identity gives
\begin{align*}
\Delta_{\mathrm{RB}}(P(x))
&=P(x)\bar\otimes1+x\bar\otimes P(1)+1\bar\otimes P(x),\\
\Delta_{\mathrm{RB}}(P^2(1))
&=P^2(1)\bar\otimes1+P(1)\bar\otimes P(1)+1\bar\otimes P^2(1).
\end{align*}
Theorem \ref{thm:filtered-hopf} supplies the antipode, with
$S(P(x))=-P(x)+x\diamond P(1)$.
The Rota--Baxter identity gives
$P(1)\diamond P(1)=2P^2(1)+\lambda P(1)$, and hence
$S(P^2(1))=P^2(1)+\lambda P(1)$.
Moreover,
$$
\Delta_{\mathrm{RB}}(P(x))
-\tau_{RB(A),RB(A)}\Delta_{\mathrm{RB}}(P(x))
=x\bar\otimes P(1)-P(1)\bar\otimes x\neq0.
$$
Indeed, $x$ and $P(1)$ are nonzero and lie in distinct tensor
length summands of $RB(A)$. Thus the free cocycle bialgebra
need not be cocommutative, even when its generating bialgebra is.
Since $P(1)\neq0$, this free operator differs from the
operator $-\lambda(\mathrm{id}_{RB(A)}-u_{RB(A)}\varepsilon_{\mathrm{RB}})$
of Corollary \ref{coro:canonical-cocycle}.

For a group-like input, take
$A=\mathbf{k}[z,z^{-1}]$ with
$\Delta_A(z)=z\otimes z$ and $\varepsilon_A(z)=1$.
Corollary \ref{cor:free-commutative} gives
$$
\Delta_{\mathrm{RB}}(P(j_A(z)))
=P(j_A(z))\bar\otimes1_{RB(A)}
 +j_A(z)\bar\otimes P(j_A(z)).
$$
Corollary \ref{coro:element-iterates}(\ref{it:ei-b}) gives the
coproducts of its higher iterates.
\end{exam}

\medskip
\noindent
\textbf{Acknowledgments.} This work was supported by the National
Natural Science Foundation of China (12671029, 12101316 and 12301025).

\noindent
\textbf{Declaration of interests.} The authors declare no conflict of interest.

\smallskip

\noindent
{\bf Data availability. } Data sharing is not applicable as no new data were created or analyzed.

\end{document}